\newtheorem{theorem}{Theorem}[section]
\newtheorem{lemma}[theorem]{Lemma}
\newtheorem{proposition}[theorem]{Proposition}
\newtheorem{corollary}[theorem]{Corollary}
\newtheorem{asser}[theorem]{Assertion}
\newtheorem{remark}[theorem]{Remark}
\numberwithin{equation}{section}
\newcommand{\mc}[1]{{\mathcal #1}}
\newcommand{\ms}[1]{{\mathscr #1}}
\newcommand{\mf}[1]{{\mathfrak #1}}
\newcommand{\mb}[1]{{\mathbf #1}}
\newcommand{\bb}[1]{{\mathbb #1}}
\newcommand{\bs}[1]{{\boldsymbol #1}}
\renewcommand{\epsilon}{\varepsilon}
\newcommand{\cs}{\mathcal{S}}
\newcommand{\HH}{\mathbb{H}}
\newcommand{\PP}{\mathbb{P}}
\newcommand{\R}{\mathbb{R}}
\newcommand{\TT}{\mathbb{T}}
\let\b=\beta
\let\s=\sigma
\let\l=\lambda
\newcommand{\1}{\,\rlap{\small 1}\kern.13em 1}
\newcommand{\sqr}[2]{{\vcenter{\hrule height.#2pt%
                      \hbox{\vrule width.#2pt height#1pt\kern#1pt%
                            \vrule width.#2pt}%
                      \hrule height.#2pt}}}
\newcommand{\capa}{\textnormal{cap}}
\newcommand{\moins}{\textnormal{\textbf{-1}}}
\newcommand{\zero}{\textnormal{\textbf{0}}}
\newcommand{\plus}{\textnormal{\textbf{+1}}}
\begin{document}
	
\title[Metastability for the Blume-Capel model] {Metastability of the
  two-dimensional Blume-Capel model with zero chemical potential and
  small magnetic field on a large torus}

\author{C. Landim, P. Lemire, M. Mourragui}

\address{\noindent IMPA, Estrada Dona Castorina 110, CEP 22460 Rio de
  Janeiro, Brasil and CNRS UMR 6085, Universit\'e de Rouen, Avenue de
  l'Universit\'e, BP.12, Technop\^ole du Madril\-let, F76801
  Saint-\'Etienne-du-Rouvray, France.}
\email{landim@impa.br}

\address{
  \hfill\break\indent CNRS UMR 6085, Universit\'e de Rouen,
  \hfill\break\indent Avenue de l'Universit\'e, BP.12, Technop\^ole du
  Madril\-let, \hfill\break\indent
F76801 Saint-\'Etienne-du-Rouvray, France.}
\email{Paul.Lemire@etu.univ-rouen.fr}

\address{
  \hfill\break\indent CNRS UMR 6085, Universit\'e de Rouen,
  \hfill\break\indent Avenue de l'Universit\'e, BP.12, Technop\^ole du
  Madril\-let, \hfill\break\indent
F76801 Saint-\'Etienne-du-Rouvray, France.}
\email{Mustapha.Mourragui@univ-rouen.fr}

\begin{abstract} 
  We consider the Blume-Capel model with zero chemical potential and
  small magnetic field in a two-dimensional torus whose length
  increases with the inverse of the temeprature. We prove the
  mestastable behavior and that starting from a configuration with
  only negative spins, the process visits the configuration with only
  $0$-spins on its way to the ground state which is the configuration
  with all spins equal to $+1$.
\end{abstract}

\maketitle

\section{Introduction}
\label{intro}

The Blume--Capel model is a two dimensional, nearest-neighbor spin
system where the single spin variable takes three possible values:
$−1$, $0$ and $+1$. One can interpret it as a system of particles with
spins. The value $0$ of the spin at a lattice site corresponds to the
absence of particles, whereas the values $\pm 1$ correspond to the
presence of a particle with the respective spin.

Denote by $\TT_{L} = \lbrace 1,\dots,L \rbrace$ the discrete,
one-dimensional torus of length $L$, and let $\Lambda_{L} = \TT_{L}
\times \TT_{L}$, $\Omega_L = \lbrace -1, 0, 1
\rbrace^{\Lambda_L}$. Elements of $\Omega_L$ are called configurations
and are represented by the Greek letter $\sigma$. For $x \in
\Lambda_L$, $\s(x) \in \{-1,0,1\}$ stands for the value at $x$ of the
configuration $\s$ and is called the spin at $x$ of $\sigma$.

We consider in this article a Blume--Capel model with zero chemical
potential and a small positive magnetic field. Fix an external field
$0<h<2$, and denote by $\HH = \HH_{L,h} : \Omega_L \rightarrow \R$ the
Hamiltonian given by
\begin{equation} 
\HH (\s) = \sum \left( \s(y) - \s(x) \right)^2 - h \sum_{x \in
  \Lambda_L} \s(x), 
\label{hamiltonian}\end{equation}
where the first sum is carried over all unordered pairs of
nearest-neighbor sites of $\Lambda_L$.  We assumed that $h<2$ for the
configuration whose $0$-spins form a rectangle in a background of $-1$
spins to be a local minima of the Hamiltonian.

The continuous-time Metropolis dynamics at inverse temperature $\beta$
is the Markov chain on $\Omega_L$, denoted by
$\lbrace \s_t : t \geq 0 \rbrace$, whose infinitesimal generator
$L_\b$ acts on functions $f : \Omega_L \rightarrow \R$ as
\begin{equation*}
\begin{aligned} 
(L_\b f)(\s) &= \sum_{x \in \Lambda_L} R_\b (\s,\s^{x,+}) 
\, [ f(\s^{x,+}) - f(\s) ] \\
&+ \sum_{x \in \Lambda_L} R_\b (\s,\s^{x,-}) \, [ f(\s^{x,-}) - f(\s) ]\;.  
\end{aligned}
\end{equation*}
In this formula, $\sigma^{x,\pm}$ represents the configuration obtained
from $\sigma$ by modifying the spin at $x$ as follows,
\begin{equation*}
\sigma^{x,\pm} (z) := 
\begin{cases}
\sigma (x) \pm 1 \ \textrm{mod}\ 3 & \textrm{ if \ } z=x\;, \\
\sigma (z) & \textrm{ if \ } z\neq x\;, 
\end{cases}
\end{equation*}
where the sum is taken modulo $3$, and the jump rates $R_\b$ are given
by
\begin{equation*}
R_\beta(\sigma,\sigma^{x,\pm}) \, =\, 
\exp\Big\{ -\beta \, \big[\mathbb H (\sigma^{x ,\pm}) 
- \mathbb H (\sigma) \big]_+\Big\}\, , \quad x\in \Lambda_L\, ,
\end{equation*}
where $a_+$, $a\in \bb R$, stands for the positive part of $a$: $a_+ =
\max\{a, 0\}$.  We often write $R$ instead of $R_\b$.

Denote by $\mu_\b$ the Gibbs measure associated to the Hamiltonian
$\HH$ at inverse temperature $\b$,
\begin{equation} \mu_\b(\s) = \frac{1}{Z_\b} e^{-\b
    \HH(\s)}, 
\label{gibbsmeasure}
\end{equation} 
where $Z_\b$ is the partition function, the normalization constant
which turns $\mu_\b$ into a probability measure. We often write $\mu$
instead of $\mu_\b$.

Clearly, the Gibbs measure $\mu_\beta$ satisfies the detailed balance
conditions
\begin{equation*}
\mu_\beta (\sigma) \, R_\beta(\sigma,\sigma^{x,\pm})\, =\, 
\min\big\{\mu_\beta (\sigma)\, ,\, \mu_\beta (\sigma^{x, \pm}) \big\}
\, =\, \mu_\beta (\sigma^{x,\pm})\, R_\beta(\sigma^{x,\pm},\sigma)\, ,
\end{equation*}
$\sigma \in \Omega_L$, $x\in\Lambda_L$, and is therefore reversible
for the dynamics.

Denote by $\moins, \zero, \plus$ the configurations of $\Omega_L$ with
all spins equal to $-1,0,+1$, respectively. The configurations
$\moins$, $\zero$ are local minima of the Hamiltonian, while the
configuration $\plus$ is a global minimum. Moreover,
$\bb H(\textbf{0}) < \bb H(\textbf{-1})$.

The existence of several local minima of the energy turns the
Blume-Capel model a perfect dynamics to be examined by the theory
developed by Beltr\'an and Landim in \cite{bl2, bl7} for metastable
Markov chains.

Let $\ms M = \{{\bf -1}, {\bf 0}, {\bf +1}\}$, and denote by
$\Psi : \Omega_L\to \{-1,0, 1, \mf d\}$ the projection
defined by 
\begin{equation*}
\Psi(\sigma) \;=\; \sum_{\eta \in \ms M} \pi(\eta) \, \mb 1\{\sigma =
\eta\} 
\;+\; \mf d \, \mb 1\{\sigma \not\in \ms M \}\;,
\end{equation*}
where $\mf d$ is a point added to the set $\{-1,0, 1\}$ and
$\pi : \ms M \to \{-1,0,1\}$ is the application which provides the
magnetization of the states $\bf -1$, $\bf 0$, $\bf +1$:
$\pi({\bf -1}) = -1$, $\pi({\bf 0}) = 0$, $\pi({\bf +1}) = 1$.

A scheme has been developed in \cite{bl2, bl7, llm} to derive the
existence of a time-scale $\theta_\beta$ for which the the
finite-dimensional distributions of the hidden Markov chain
$\Psi(\sigma(t\theta_\beta))$ converge to the ones of a
$\{-1,0, 1\}$-valued, continuous-time Markov chain. Note that the
limiting process does not take the value $\mf d$.

The approach consists in proving first that in the time-scale
$\theta_\beta$ the trace of the process $\sigma_t$ on $\ms M$
converges in the Skorohod topology to a continuous-time Markov
chain. Then, to prove that in this time scale the time spent on
$\Omega_L \setminus \ms M$ is negligible. Finally, to show that, at
any fixed, the probability to be in $\Omega_L \setminus \ms M$ is 
negligible.

This is the content of the main result of this article. We are also
able to describe the path which drives the process from the highest
local minima, $\moins$ to the ground state $\plus$. We not only
characterize the critical droplet but we also describe precisely how
it grows until it invades the all space. In this process, we show that
starting from $\moins$, the model visits $\zero$ on its way to
$\plus$. 

We consider in this article the situation in which the length of the
torus increases with the inverse of the temperature. The case in which
$L$ is fixed has been considered by Cirillo and Nardi \cite{cn13}, by
us \cite{ll} and by Cirillo, Nardi and Spitoni \cite{cns17}.

The method imposes a limitation on the rate at which the space grows,
as we need the energy to prevail over the entropy created by the
multitude of configurations. In particular, the conditions on the
growth impose that the stationary state restricted to the valleys of
$\moins$, $\zero$ or $\plus$, defined at the beginning of the next
section, is concentrated on these configurations (cf. \eqref{8-7}).

The study of the metastability of the Blume--Capel model has been
initiated by Cirillo and Olivieri \cite{co96} and Manzo and Olivieri
\cite{mo01}. In this last paper the authors consider the metastable
behavior of the two-dimensional model in infinite volume with non-zero
chemical potential. The dynamics is, however, different, as flips from
$\pm 1$ to $\mp 1$ are not allowed.  We refer to these papers for the
interest of the model and its role in the understanding of
metastability.

\section{Notation and Results}
\label{sec1}

Denote by $D (\R_+, \Omega_L)$ the space of right-continuous functions
$\omega : \R_+ \rightarrow \Omega_L$ with left-limits and by $\PP_\s =
\PP_\s^{\b,L}$, $\s \in \Omega_L$, the probability measure on the path
space $D (\R_+,\Omega_L)$ induced by the Markov chain $(\sigma_t :
t\ge 0)$ starting from $\s$. Sometimes, we write $\sigma(t)$ instead
of $\sigma_t$.

Denote by $H_A$, $H_A^+$, $A \subset \Omega_L$, the hitting time of
$A$ and the time of the first return to $A$ respectively :
\begin{equation} 
\label{hittingtimes}
H_A \,=\, \inf \lbrace t > 0 : \s_t \in A \rbrace\;, \quad 
H_A^+ \,=\, \inf \lbrace t > T_1 : \s_t \in A \rbrace \;, 
\end{equation}
where $T_1$ represents the time of the first jump of the chain $\s_t$.

\smallskip\noindent{\bf Critical droplet.}
We have already observed that $\plus$ is the ground state of the
dynamics and that $\moins$ and $\zero$ are local minima of the
Hamiltonian. The first main result of this article characterizes the
critical droplet in the course from $\moins$ and $\zero$ to
$\plus$.  Let $n_0 = \lfloor 2/h \rfloor$, where $\lfloor a \rfloor$
stands for the integer part of $a \in \R_+$.

Denote by $\ms V_\moins$ the valley of $\moins$. This is the set
constituted of all configurations which can be attained from $\moins$
by flipping $n_0(n_0+1)$ or less spins from $\moins$. If after
$n_0(n_0+1)$ flips we reached a configuration where $n_0(n_0+1)$
$0$-spins form a $[n_0\times (n_0+1)]$-rectangle, we may flip one more
spin. Hence, all configurations of $\ms V_\moins$ differ from $\moins$
in at most $n_0(n_0+1)+1$ sites.

The valley $\ms V_\zero$ of $\zero$ is defined in a similar way, a
$n_0\times (n_0+1)$-rectangle of $+1$-spins replace the one of
$0$-spins. Here and below, when we refer to a $[n_0\times
(n_0+1)]$-rectangle, $n_0$ may be its length or its height.

Denote by ${\mf R}^l = {\mf R}^l_L$ the set of configurations with
$n_0 (n_0 +1) + 1$ $0$-spins forming, in a background of $-1$-spins, a
$n_0 \times (n_0 +1)$ rectangle with an extra $0$-spin attached to the
longest side of this rectangle. This means that the extra $0$-spin is
surrounded by three $-1$-spins and one $0$-spins which belongs to the
longest side of the rectangle. The set ${\mf R}^l_{0} = {\mf
  R}^l_{0,L}$ is defined analogously, the $-1$-spins, $0$-spins being
replaced by $0$-spins, $+1$-spins, respectively.

We show in the next theorem that, starting from $\moins$,
resp. $\zero$, the process visits ${\mf R}^l$, resp. ${\mf R}^l_0$,
before hitting $\{\zero, \plus\}$, resp. $\{\moins, \plus\}$. An
assumption on the growth of the torus is necessary to avoid the
entropy of configurations with high energy to prevail over the local
minima of the Hamiltonian. We assume that
\begin{equation}
\label{8-3}
\lim_{\b \to \infty} |\Lambda_L|\, e^{-2\b} \;=\; 0\;.
\end{equation}
We prove in Lemma \ref{p74} that under this condition,
\begin{equation}
\label{8-7}
\lim_{\b \to \infty} \frac{\mu_\beta(\ms V_\moins \setminus
\{\moins\})} {\mu_\beta (\moins)} \;=\; 0\;.
\end{equation}

\begin{theorem}
\label{mt22}
Assume that $0<h<1$, that $2/h$ is not an integer and that \eqref{8-3}
is in force. Then,
\begin{equation*}
\lim_{\b \to \infty} \PP_\moins [ H_{{\mf R}^l} < H_{\{\zero, \plus\}}] \;=\; 1 \;,
\quad \lim_{\b \to \infty} \PP_\zero [ H_{{\mf R}^l_0} < H_{\{\moins, \plus\}} ] \;=\; 1
\;.  
\end{equation*}
\end{theorem}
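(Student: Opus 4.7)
The plan is to combine a careful energetic analysis of the landscape near $\moins$ with standard potential-theoretic estimates for reversible metastable chains. The two assertions are related by the spin shift $\sigma \mapsto \sigma - 1 \pmod 3$, which commutes with the dynamics (modifying $\HH$ only by an additive constant) and maps $\zero \to \moins$, ${\mf R}^l_0 \to {\mf R}^l$, and $\{\moins,\plus\} \to \{\plus,\zero\}$; hence the second assertion follows from the first, and I focus on the first.

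The first step is to compute the communication height $\Gamma^*$ between $\moins$ and $\{\zero,\plus\}$. Inside $\ms V_\moins$ only $-1$ and $0$ spins appear in subsaddle configurations (a single $+1$-spin on a $-1$-background costs $16-2h$, far above $\Gamma^*$). For a configuration whose $0$-spins form a cluster of perimeter $P$ and size $k$, $\HH(\sigma)-\HH(\moins)=P-hk$, and by the discrete isoperimetric inequality the minimum at fixed $k$ is attained on nearly-square rectangles. A direct calculation along nested rectangles yields
\[
\Gamma^* \;=\; 4(n_0+1) - h\,\bigl[n_0(n_0+1)+1\bigr],
\]
attained by canonical growth paths $\moins \to 1\times 1 \to 1\times 2 \to 2\times 2 \to \cdots \to n_0\times(n_0+1) \to \text{rectangle-plus-protuberance}$. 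At absolute height $\Gamma^*$ the saddles come in two families: ${\mf R}^l$ (extra $0$-spin on the long side of the critical rectangle) and the analogous set ${\mf R}^s$ with short-side attachment.

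The key geometric observation is that the ${\mf R}^s$-route is a trap. Completing the short column starting from ${\mf R}^s$ reaches the larger rectangle $n_0\times(n_0+2)$ at absolute height $\Gamma^* - (n_0-1)h$, but the next forward step---a long-side protuberance on the new rectangle---requires an additional climb $2-h$, so the secondary saddle sits at $\Gamma^* + \delta$ with $\delta := 2 - n_0 h > 0$ (strict because $2/h$ is not an integer). Meanwhile the reverse barrier from $n_0\times(n_0+2)$ back to $n_0\times(n_0+1)$ is only $(n_0-1)h = (2-h) - \delta$, so the Metropolis process at $n_0\times(n_0+2)$ returns to the critical rectangle with probability $1 - O(e^{-\beta\delta})$. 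Enumerating the alternative rectangular sequences $l_1\times l_2$, and excluding non-rectangular droplets via isoperimetry, shows that \emph{every} path from $\moins$ to $\{\zero,\plus\}$ avoiding ${\mf R}^l$ must traverse some configuration of energy at least $\HH(\moins) + \Gamma^* + \delta$.

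To convert this geometric gap into the asserted probability bound, one appeals to the Dirichlet-form machinery for reversible chains. Set $\ce = \{\zero,\plus\}$. Exhibiting $\sim |\Lambda_L|$ parallel canonical unit flows through the primary saddle gives
\[
\capa(\moins, {\mf R}^l) \;\geq\; c\,|\Lambda_L|\,\mu(\moins)\,e^{-\beta\Gamma^*},
\]
while summing Boltzmann weights over all edges crossing the secondary saddle yields the matching upper bound $\capa\bigl(\moins, \ce\,;\,\text{avoiding}\,{\mf R}^l\bigr) \leq C\,|\Lambda_L|\,\mu(\moins)\,e^{-\beta(\Gamma^*+\delta)}$. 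The $|\Lambda_L|$ prefactors cancel in the capacity ratio, whence
\[
\PP_\moins[H_\ce < H_{{\mf R}^l}] \;\leq\; C'\,e^{-\beta\delta} \;\longrightarrow\; 0.
\]
Hypothesis \eqref{8-3} enters through Lemma \ref{p74}, which ensures that the Gibbs measure on $\ms V_\moins$ concentrates on $\moins$ and legitimizes the capacity identities above. The principal obstacle is the second step: the full case analysis verifying that every avoidance route carries the secondary barrier $\delta$ requires a careful enumeration of rectangles of differing aspect ratios together with isoperimetric control over non-rectangular droplet shapes.
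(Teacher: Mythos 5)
Your energetic picture (the critical droplet, the dichotomy $\mf R^l$ versus $\mf R^s$, and the secondary barrier $\delta=2-n_0h>0$ above the $n_0\times(n_0+2)$ rectangle) is the right one and matches the paper's. But two steps of the argument fail as written. First, the reduction of the second assertion to the first via $\sigma\mapsto\sigma-1 \pmod 3$ is false: this map sends the pair $(-1,+1)$, whose interaction energy is $4$, to the pair $(+1,0)$, whose interaction energy is $1$, and it does not shift the magnetization term by a constant either (e.g.\ $\plus$, $\zero$, $\moins$ have magnetizations $|\Lambda_L|,0,-|\Lambda_L|$ while their images have $0,-|\Lambda_L|, |\Lambda_L|$). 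So $\HH$ is not invariant up to an additive constant and the dynamics is not equivariant. The two statements are proved by \emph{parallel} arguments, not by symmetry; in particular one must separately rule out $-1$-spins in the valley of $\zero$ (the analogue of Assertion \ref{fasB}), which is easy but is not a consequence of any global spin shift.

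Second, the conversion of the energetic gap into the probability bound is not justified. The quantity $\capa(\moins,\{\zero,\plus\};\text{avoiding }\mf R^l)$ is not a capacity of the given chain, and the only inequality available from \eqref{probatriangle} is $\PP_\moins[H_{\{\zero,\plus\}}<H_{\mf R^l}]\le \capa(\moins,\{\zero,\plus\})/\capa(\moins,\{\zero,\plus\}\cup\mf R^l)$, which is $O(1)$ here because the dominant flow from $\moins$ to $\{\zero,\plus\}$ passes precisely through $\mf R^l$. Making your idea rigorous would require a reflected chain on a set $W$ of configurations reachable from $\moins$ below level $\Gamma^*+\delta$ without touching $\mf R^l$, together with an entropy bound on the boundary of $W$ (this is where \eqref{8-3} really enters, not only through Lemma \ref{p74}); you acknowledge that the accompanying global saddle enumeration is unproved, and it is in fact the hard part. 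The paper avoids this global analysis entirely: Proposition \ref{Lemma1} shows (via reflected capacities on the flip-count valley $\ms V_\moins$, whose boundary has controllable entropy) that the exit from $\ms V_\moins$ occurs at $\mf R^a=\mf R^l\cup\mf R^s$; Assertion \ref{mas1}, Lemma \ref{mas3} and Lemma \ref{ml8} show that from $\mf R^s$ the droplet is resorbed and the process returns to $\moins$ before hitting $\mf R^l\cup\{\zero\}$; and a renewal identity in which $\PP_\moins[H_{\mf R^l}<H_\zero]$ appears on both sides then closes the argument. This local-plus-renewal decomposition is what replaces, and substantially simplifies, the missing global variational step in your proposal.
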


On the other hand, under the condition that
\begin{equation}
\label{8-4}
\lim_{\b \to \infty} 
|\Lambda_L|^{1/2}\, \big\{\, e^{-[(n_0+1)h-2]\beta} \,+\,   e^{-h\beta}\,\big\}
\quad\text{and}\quad \lim_{\b \to \infty} |\Lambda_L|^{2} \, e^{-(2-h) \beta} \;=\;0 \;,
\end{equation}
it follows from Proposition \ref{ml7} that
\begin{equation*}
\lim_{\b \to \infty} \inf_{\eta\in \mf R^l}
\PP_\eta [ H_{\{\zero, \plus\}} < H_\moins  ] \;>\; 0 \;,
\quad \lim_{\b \to \infty} \inf_{\xi\in \mf R^l_0}
\PP_\xi [ H_{\{\moins, \plus\}}  < H_\zero ] \;>\; 0 \;.  
\end{equation*}

The condition that $0<h<1$ ensures that the smaller side of the
critical rectangle is larger than or equal to $2$: $n_0\ge 2$.

\smallskip\noindent{\bf The route from $\moins$ to $\plus$.}
The second main result of the article asserts that starting from
$\moins$, the processes visits $\zero$ in its way to $\plus$.
Actually, in Section \ref{proofs1}, we describe in detail how the
critical droplet growths until it invades the all space.

\begin{theorem}
\label{mainprop}
Assume that $0<h<1$, that $2/h$ is not an integer and that condition
\eqref{8-4} is in force.  Then,
\begin{equation*}
\lim_{\b\to\infty} \PP_\moins [ H_\plus < H_\zero] \;=\;  0
\quad\text{and}\quad
\lim_{\b\to\infty} \PP_\zero [ H_\moins < H_\plus] \;=\;  0 \;.
\end{equation*}
\end{theorem}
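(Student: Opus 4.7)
I treat the first limit $\PP_\moins[H_\plus<H_\zero]\to 0$; the second follows by the analogous argument with $(\moins,\zero,\mf R^l)$ replaced by $(\zero,\plus,\mf R^l_0)$. The plan is to combine Theorem \ref{mt22} with a detailed descent analysis starting from the critical droplet. Theorem \ref{mt22} forces the trajectory starting at $\moins$ to visit $\mf R^l$ before hitting $\{\zero,\plus\}$. Applying the strong Markov property at $H_{\mf R^l}$, together with the uniform lower bound $\liminf_{\beta}\inf_{\eta\in\mf R^l}\PP_\eta[H_{\{\zero,\plus\}}<H_\moins]>0$ furnished by Proposition \ref{ml7}, and decomposing the path into i.i.d.\ excursions from $\moins$ to $\moins$ via $\mf R^l$, a geometric summation reduces the claim to
\begin{equation*}
\sup_{\eta\in\mf R^l}\,\PP_\eta\big[H_\plus\,<\,H_\zero\wedge H_\moins\big]\;\longrightarrow\;0.
\end{equation*}

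The core step is to exhibit an explicit descent path from any $\eta\in\mf R^l$ to $\zero$ along which the energy never exceeds $\HH(\mf R^l)$. Starting from the $[n_0\times(n_0+1)]$-rectangle of $0$-spins with its protuberance, one completes the protuberance row by flipping $-1\to 0$ at sites with two $0$-neighbors, each flip having energy change $-h$; after $n_0$ such flips one reaches an $(n_0+1)\times(n_0+1)$ square of $0$-spins. One then nucleates a new protuberance on a side (cost $+2-h$) and iterates. Since $2/h\notin\Z$ forces $(n_0+1)h>2$, the total change per full ``protuberance plus row'' cycle is $(2-h)-n_0h<0$, so the energy decreases strictly after the initial saddle at $\HH(\mf R^l)$ and the $0$-droplet grows to invade the torus. (Once the droplet wraps around, a symmetric calculation with strip-completion shows the energy continues to decrease down to $\HH(\zero)=0$.)

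The crux of the argument — and the main obstacle — is to show that every trajectory from $\mf R^l$ to $\plus$ that avoids $\zero$ must visit a configuration of energy at least $\HH(\mf R^l)+\delta$ for some $\delta>0$ independent of $\beta$. Any such trajectory must at some point create its first $+1$-spin: doing so inside the $0$-droplet or at its boundary costs at least $4-h$ in one step, and any less expensive route is ruled out by the fact that $(\sigma(y)-\sigma(x))^2$ charges $(\pm 1,\mp 1)$ contacts with weight $4$; then a macroscopic $+1$-region can only grow by nucleating a critical $+1$-droplet, with a fresh barrier of the same structural order $4n_0+4-h[n_0(n_0+1)+1]$. Ruling out every alternative route (in particular paths that first partially dissolve the $0$-droplet and only afterwards build a $+1$-droplet elsewhere) is the delicate isoperimetric-type step, and is handled by the same ideas used in the proof of Theorem \ref{mt22}, now adapted to the transitions $0\to +1$ and $-1\to +1$. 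Once such a $\delta>0$ is obtained, the standard reversible-Markov-chain estimate bounds $\PP_\eta[H_\plus<H_\zero\wedge H_\moins]$ by $C(\beta,L)\,e^{-\beta\delta}$ with $C(\beta,L)$ polynomial in $|\Lambda_L|$, and condition \eqref{8-4} then closes the argument. The same scheme, with $+1$ in the role of $0$ and $0$ in the role of $-1$, yields the second limit: from $\mf R^l_0$ the natural descent fills the torus with $+1$'s, whereas reaching $\moins$ would demand a second critical nucleation of a $(-1)$-droplet inside the $0$-region, costing an additional barrier.
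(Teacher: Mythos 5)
Your outer structure (visit $\mf R^l$ by Theorem \ref{mt22}, apply the strong Markov property there, use the uniform lower bound from Proposition \ref{ml7}, and close with a geometric/excursion argument) is essentially the paper's proof of Theorem \ref{mainprop} in Section \ref{sec3}. The problem is your ``crux'' step. The claim that every trajectory from $\mf R^l$ to $\plus$ avoiding $\zero$ must reach energy $\HH(\mf R^l)+\delta$ for some $\beta$-independent $\delta>0$ is false. The $0$-droplet can grow to cover all of $\Lambda_L$ except a single site; the resulting configuration $\xi$ has $\HH(\xi)=4+h$, which lies an amount of order $h\,|\Lambda_L|$ \emph{below} $\Gamma_c=\HH(\mf R^l)$. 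Nucleating and growing a critical $+1$-droplet inside that $0$-sea (avoiding the remaining $-1$-site) costs only the barrier $4(n_0+1)-h[n_0(n_0+1)+1]$ \emph{relative to} $\HH(\xi)$, so the whole path from $\mf R^l$ to $\plus$ stays at or below $\Gamma_c$ after the initial saddle, yet never visits the exact configuration $\zero$. You have conflated the height of a barrier above a local minimum with an absolute energy level; on a large torus the two differ by $O(h|\Lambda_L|)$, so no capacity bound of the form $C(\beta,L)e^{-\beta\delta}$ can be extracted this way, and in any case such a bound would not close under \eqref{8-4} without controlling the polynomial degree of $C(\beta,L)$ against $\delta$.

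The smallness of $\PP_\eta[H_\plus<H_\zero\wedge H_\moins]$ is therefore not energetic but dynamical/entropic: one must show that, once supercritical, the $0$-droplet completes its row-by-row invasion of the torus --- hitting the exact configuration $\zero$ --- before any $+1$-nucleation (or other defect) has time to occur. This is precisely what Section \ref{proofs1} does (Assertions \ref{mas1}--\ref{mas2}, Lemmas \ref{mas3}, \ref{ml1}, \ref{ml2}, Proposition \ref{ml11}, Corollary \ref{ml10}), by coupling the droplet's growth with auxiliary birth-and-death chains and comparing the completion time $\sim|\Lambda_L|^2 e^{(2-h)\beta}$ with the rates of competing events; the hypotheses \eqref{8-4} are tuned exactly to this race. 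The conclusion you need, $\max_{\sigma\in\mf R^a}\PP_\sigma[H_{\plus}<H_{\{\moins,\zero\}}]\le C_0\,\delta(\beta)$, is already contained in Proposition \ref{ml7} (the three displayed probabilities sum to $1-O(\delta(\beta))$), so the correct repair is to invoke that proposition in full rather than attempt an isoperimetric barrier argument, which cannot succeed here.
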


The strategy of the proof relies on the assumption that while the
critical droplet increases, invading the entire space, nothing else
relevant happens in other parts of the torus. A new row or column is
added to a supercritical droplet at rate $e^{- (2-h) \b}/|\Lambda_L|$,
because $e^{- (2-h)\b }$ is the rate at which a negative spin is
flipped to $0$ when it is surrounded by three negative spins and one
$0$-spin, and $L$ is the time needed for a rate one asymmetric random
walk to reach $L$ starting from the origin. We need $L^2$ because we
need an extra $0$-spin to complete a row, and then to repeat this
procedure $L$ times for the droplet to fill the torus.  This rate has
to be confronted to the rate at which a $0$-spin appears somewhere in
the space. The rate at which a negative spin is flipped to $0$ when it
is surrounded by four negative spins is $e^{-(4-h) \b}$. Since this
may happen at $|\Lambda_L|$ different positions, the method of the
proof requires at least $|\Lambda_L| e^{- (4-h) \b}$ to be much
smaller than $e^{-(2-h) \b}/|\Lambda_L|$, that is, $|\Lambda_L|^{2}
e^{- 2\b } \to 0$. This almost explains the main hypothesis of the
theorem. The extra conditions appear because we need to take care of
other details to lengthy to explain here.


\smallskip\noindent{\bf Metastability.}
For two disjoint subsets $\ms A$, $\ms B$ of $\Omega_L$, denote by
$\capa (\ms A, \ms B)$ the capacity between $\ms A$ and $\ms B$: 
\begin{equation*} 
\capa (\ms A, \ms B) \;=\; \sum_{\sigma \in \ms A} \mu_\beta(\sigma)
\lambda_\beta (\sigma) \, \PP_\s [ H_{\ms B} < H_{\ms A}^+]\; ,
\end{equation*}
where
$\lambda_\beta (\sigma) = \sum_{\sigma'\in\Omega_L} R_\beta(\sigma,
\sigma')$ represents the holding times of the Blume-Capel model.  Let
\begin{equation}
\label{71}
\theta_\beta \;=\; \frac {\mu_\beta({\bf -1})}
{\capa({\bf -1}, \{\zero, {\bf +1}\})} \;\cdot
\end{equation}
We prove in Proposition \ref{mt3} that under the hypotheses of Theorem
\ref{mainprop} for any configuration $\eta\in\mf R^l$,
\begin{equation*}
\lim_{\beta\to\infty} \frac{\capa ({\bf -1},\{{\bf 0}, {\bf +1}\})}
{\mu_\beta (\eta)\, |\Lambda_L|}\;=\;\frac{4(2n_0+1)}3\, \;.
\end{equation*}
In particular,
\begin{equation}
\label{2-1}
\theta_\beta \;=\; \big[ 1+ o_\beta(1) \big]\, \frac{3}{4(2n_0+1)}\,
\frac 1{|\Lambda_L|} \, e^{a\, \beta}\;,
\end{equation}
where $a = \bb H(\eta) - \bb H(\moins) = 4(n_0+1) - [n_0(n_0+1) +1]$,
and $o_\beta(1)$ is a remainder which vanishes as $\beta\to\infty$.

\begin{theorem}
\label{mt1b}
Under the hypotheses of Theorem \ref{mainprop}, the finite-dimensional
distributions of the speeded-up, hidden Markov chain
$X_\beta(t) = \Psi\big(\sigma(\theta_\beta t)\big)$ converge to the
ones of the $\{-1,0, 1\}$-valued, continuous-time Markov chain $X(t)$
in which $1$ is an absorbing state, and whose jump rates are given by
\begin{equation*}
r(-1,0) \;=\;  r(0,1) \;=\; 1\;, \quad
r(-1,1) \;=\; r(0,-1)\;=\; 0\;. 
\end{equation*}
\end{theorem}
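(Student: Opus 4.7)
The plan is to invoke the Beltr\'an--Landim framework \cite{bl2, bl7, llm}, which reduces the finite-dimensional convergence of $X_\beta$ to three items: (i) convergence in the Skorohod topology of the speeded-up trace $\sigma^{\ms M}(\theta_\beta \cdot)$ of the chain on $\ms M = \{\moins, \zero, \plus\}$ to the target Markov chain $X$; (ii) negligibility of the time spent outside $\ms M$ in the scale $\theta_\beta$; and (iii) fixed-time concentration of the chain on $\ms M$. For (i), the recipe of \cite{bl2, bl7} identifies the jump rates of the limiting trace from $\eta \in \ms M$ via
\begin{equation*}
r(\Psi(\eta), \Psi(\xi)) \;=\; \lim_{\beta \to \infty} \theta_\beta\,\frac{\capa(\eta, \ms M \setminus \{\eta\})}{\mu_\beta(\eta)}\,\PP_\eta\big[H_\xi \;=\; H_{\ms M \setminus \{\eta\}}\big].
\end{equation*}
With $\eta = \moins$, Proposition \ref{mt3} together with the definition \eqref{71} of $\theta_\beta$ gives total exit rate $1$, while Theorem \ref{mainprop} forces the whole mass onto $\zero$, yielding $r(-1,0)=1$ and $r(-1,1)=0$. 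The same argument applied to $\zero$ (using the analogue of Proposition \ref{mt3} obtained by replacing $\moins, \mf R^l$ with $\zero, \mf R^l_0$; the prefactor and the barrier $a\beta$ match, because the droplet-counting argument is invariant under the shift $-1\mapsto 0\mapsto +1$ relevant to the nearest-neighbor interaction and the magnetic field) combined with the second half of Theorem \ref{mainprop} gives $r(0,1)=1$ and $r(0,-1)=0$. Finally, $\mu_\beta(\plus)/\mu_\beta(\moins)$ diverges exponentially, so $\theta_\beta\, \capa(\plus, \{\moins, \zero\})/\mu_\beta(\plus) \to 0$ and $\plus$ is absorbing in the limit.

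\smallskip\noindent\textbf{Trace convergence and time outside $\ms M$.} The finite state space $\ms M$ renders tightness of the trace automatic, so once the limiting rates above are identified, (i) reduces to the convergence of the splitting probabilities that Theorem \ref{mainprop} already provides. For (ii), I combine reversibility with the concentration $\mu_\beta(\Omega_L \setminus \ms M)/\mu_\beta(\ms M) \to 0$, which is a consequence of \eqref{8-7} and its analogues in the valleys of $\zero$ and $\plus$ under \eqref{8-3}, and with the variational expression for the occupation time of a reversible chain to obtain
\begin{equation*}
\Es_\eta \int_0^{T\theta_\beta} \ind\{\sigma(s) \notin \ms M\}\, ds \;=\; o(\theta_\beta)
\end{equation*}
for any $\eta \in \ms M$.

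\smallskip\noindent\textbf{Fixed-time concentration, the main obstacle.} For (iii), one needs $\PP_\eta[\sigma(t\theta_\beta) \notin \ms M] \to 0$ for each $t > 0$ and $\eta \in \ms M$, a pointwise rather than time-averaged statement; this is the subtlest part of the argument. A standard pathway is to show that from every $\xi \notin \ms M$ the chain reaches $\ms M$ in expected time $O(e^{c\beta})$ with $c < a$, via a direct pathwise inspection of the valleys $\ms V_\moins, \ms V_\zero, \ms V_\plus$ and their complements, and then to apply the Markov property at time $t\theta_\beta - s$ with $s$ intermediate between $e^{c\beta}$ and $\theta_\beta$ to promote the time-averaged estimate from (ii) into pointwise control. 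The main obstacle is precisely this uniform in $\beta$ relaxation bound inside each valley: it requires the growth condition \eqref{8-3} to prevent entropic competitors from dominating the local minima, and it leans on the detailed pathwise structure of the critical droplets provided by Theorem \ref{mt22} and Proposition \ref{ml7} to show that, once the chain leaves $\ms M$ without forming a supercritical droplet, it is quickly funnelled back.
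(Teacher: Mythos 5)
Your overall architecture is the paper's: Theorem \ref{mt1b} is reduced via \cite[Proposition 2.1]{llm} to (i) convergence of the trace on $\ms M$ (Proposition \ref{p71}), (ii) negligibility of the time spent in $\Delta=\Omega_L\setminus\ms M$ on the scale $\theta_\beta$ (Proposition \ref{ml6}), and (iii) a fixed-time estimate (condition \eqref{9-2}). Your treatment of (i) — the capacity formula for the trace rates, the normalization $\theta_\beta\,\capa(\moins,\{\zero,\plus\})/\mu_\beta(\moins)=1$, the cancellation that makes the exit rate from $\zero$ also equal to $1$ (Corollary \ref{as71b}), the absorption at $\plus$ via $\mu_\beta(\zero)/\mu_\beta(\plus)\to 0$, and Theorem \ref{mainprop} to kill the rates $r(-1,1)$ and $r(0,-1)$ — matches Section \ref{sec5}.

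The gap is in (ii) and, as you anticipated, in (iii); in both cases the missing idea is the same. Your sketch of (ii) rests on a \emph{global} equilibrium comparison, $\mu_\beta(\Omega_L\setminus\ms M)/\mu_\beta(\ms M)\to 0$ fed into an occupation-time bound. But \eqref{8-7} and its analogues only control $\mu_\beta$ \emph{inside the three valleys}; and even if a global bound held, the occupation-time inequality $\bb E_\moins\big[\int_0^{H_B}\mb 1\{\sigma_s\in\Delta\}\,ds\big]\le \mu_\beta(\Delta)/\capa(\moins,B)$ is useless here because $\Delta$ contains supercritical configurations (large bands of $0$- or $+1$-spins) whose Gibbs weight is enormously larger than $\capa(\moins,\{\zero,\plus\})\asymp|\Lambda_L|\,\mu_\beta(\eta)$, $\eta\in\mf R^a$: the process started from $\moins$ is very far from equilibrium, whose mass sits at $\plus$. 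This is why the paper's proof of Proposition \ref{ml6} does not argue globally but splits $[0,t\theta_\beta]$ into the five phases $[0,H_{\mf B^+}]$, $[H_{\mf B^+},H_\zero]$, etc., applies the occupation-time bound only \emph{within} the current valley (via the reflected process and Lemma \ref{p74}), and controls the growth phases by the droplet estimates of Section \ref{proofs1} (Lemmata \ref{p75} and \ref{p76}).

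For (iii), two remarks. First, \cite[Proposition 2.1]{llm} does not require the pointwise statement for every fixed $t$, only the small-time version \eqref{9-2} with $s\in[2\delta,3\delta]$; aiming at the stronger statement is what pushes you toward the bootstrap. Second, the proposed mechanism — a uniform bound $\sup_{\xi\in\Delta}\bb E_\xi[H_{\ms M}]=O(e^{c\beta})$, $c<a$, plus the Markov property at $t\theta_\beta-s$ — is circular: decomposing at the first visit to $\ms M$ after time $t\theta_\beta-s$ always leaves a term $\sup_{\xi\in\ms M}\sup_{v\le s}\bb P_\xi[\sigma(v)\in\Delta]$, which is precisely the quantity to be bounded, and a fast-return estimate from $\Delta$ cannot rule out being caught mid-excursion at the deterministic time $t\theta_\beta$. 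The paper breaks the circle with a local stationarity comparison: by Lemma \ref{l9-1}, $3H_{\mf B^+}/\theta_\beta$ is asymptotically exponential, so for $s\le 3\delta$ one may, at a cost vanishing as $\delta\to 0$, replace $\sigma_t$ by the process reflected in $\ms V_\moins$, and then bound $\bb P^{\ms V}_\moins[\eta(s\theta_\beta)\in\Delta]\le \mu_{\ms V}(\Delta)/\mu_{\ms V}(\moins)$, which vanishes by Lemma \ref{p74}. Nothing in your plan plays the role of this reflected-process comparison, and the uniform hitting-time bound over all of $\Delta$ that you posit is itself unproved and unnecessary.
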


Note that the limit chain does not take the value $\mf d$, in contrast
with $X_\beta(t)$ since $\Psi(\sigma) = \mf d$ for all $\sigma\not\in
\ms M$.

A natural open question is the investigation of the dynamics in
infinite volume, extending the results of Manzo and Olivieri
\cite{mo01} to the Blume-Capel model with zero chemical potential. 

\smallskip The paper is organized as follows.  In the next section, we
recall some general results on potential theory of reversible Markov
chains and we prove a lemma on asymmetric birth and death chains which
is used later in the article. In Section \ref{energy}, we examine the
formation of a critical droplet and, in Section \ref{proofs1}, the
growth of a supercritical droplet. Theorems \ref{mt22} and
\ref{mainprop} are proved in Section \ref{sec3}. In the following two
sections, we prove that the trace of $\sigma_t$ on $\ms M$ converges
to a three-state Markov chain and that the time spent outside $\ms M$
is negligible. In the final section we prove Theorem \ref{mt1b}.

\section{Metastability of reversible Markov chains}
\label{sec2}

In this section, we present general results on reversible Markov
chains used in the next sections.  Fix a finite set $E$. Consider a
continuous-time, $E$-valued, Markov chain $\lbrace X_t : t \geq 0
\rbrace$. Assume that the chain $X_t$ is irreducible and that the
unique stationary state $\pi$ is reversible. 

Elements of $E$ are represented by the letters $x$, $y$. Let $\bb
P_x$, $x\in E$, be the distribution of the Markov chain $X_t$ starting
from $x$. Recall from \eqref{hittingtimes} the definition of the
hitting time and the return time to a set.

Denote by $R(x,y)$, $x\not = y\in E$, the jump rates of the Markov
chain $X_t$, and let $\l (x) = \sum_{y\in E} R (x,y)$ be the holding
rates. Denote by $p (x,y)$ the jump probabilities, so that
$R (x,y) = \l (x) \, p (x,y)$. The stationary state of the embedded
discrete-time Markov chain is given by $M (x) = \pi (x) \, \l (x)$.

\smallskip
\noindent{\bf Potential theory.}
Fix two subsets $A$, $B$ of $E$ such that $A \cap B =
\varnothing$.
Recall that the capacity between $A$ and $B$, denoted by $\capa(A,B)$,
is given by
\begin{equation} 
\label{defcapa}
\capa (A,B) \;=\; \sum_{x \in A} M (x) \, \PP_x [ H_B <
H_A^+]\; . 
\end{equation}

Denote by $L$ the generator of the Markov chain $X_t$ and by $D(f)$
the Dirichlet form of a function $f:E\to \bb R$:
\begin{equation*}
D(f) \;=\; -\, \sum_{x\in E} f(x) \, (Lf)(x)\, \pi(x)\;=\;
\frac 12\, \sum_{x,y} \pi (x)\, R (x,y)\, 
[f(y) - f(x) ]^2 \;.
\end{equation*}
In this later sum, each unordered pair $\{a, b\} \subset E$, $a\not =
b$, appears twice. The Dirichlet principle provides a variational
formula for the capacity:
\begin{equation} 
\label{dirichlet}
\capa(A,B) \;=\; \inf_f D(f) \;, 
\end{equation}
where the infimum is carried over all functions $f: E \rightarrow
[0,1]$ such that $f = 1$ on $A$ and $f = 0$ on $B$.

Denote by $\mc P$ the set of oriented edges of $E$: $\mc P =
\{(x,y)\in E\times E : R(x,y)>0\}$. An anti-symmetric function $\phi:
\mc P \to \mathbb{R}$ is called a flow. The divergence of a flow
$\phi$ at $x\in E$ is defined as
\begin{equation*}
(\mbox{div}\,\phi)(x) \;=\; \sum_{y: (x,y)\in \mc P} \phi (x,y)\;.
\end{equation*}
Let $\mc F_{A,B}$ be the set of flows such that
\begin{equation*}
\sum_{x\in A} (\mbox{div }\phi)(x) \;=\; 1\;, \quad
\sum_{y\in B} (\mbox{div }\phi)(y) \;=\; -\, 1\;, \quad
(\mbox{div }\phi)(z) \;=\; 0 \;, \quad z\not\in A\cup B\;.
\end{equation*}
The Thomson principle provides an alternative variational formula for
the capacity:
\begin{equation}
\label{thom}
\frac 1{\capa (A,B)} \;=\; \inf_{\phi \in \mc F_{A,B}} 
\frac{1}{2}\, \sum_{(x,y)\in \mc P} \frac 1{\pi(x)\, R(x,y)} \; \phi(x,y)^2
\;.
\end{equation}
We refer to \cite{bh15} for a proof of the Dirichlet and the Thomson
principles. 

In the Blume-Capel model, by definition of the rate function
$R_\beta(\sigma,\sigma^{x,\pm})$, 
\begin{equation*}
\mu_\beta (\sigma)\, R_\beta(\sigma,\sigma^{x,\pm}) \;=\;
\mu_\beta (\sigma) \wedge \mu_\beta (\sigma^{x,\pm})\;.
\end{equation*}
This identity will be used throughout the paper, without further
notice, to replace the left-hand side, which appears in the Dirichlet
and in the Thomson principle, by the right-hand side.

We turn to an estimate of hitting times in terms of capacities.  Fix
$x \in E \setminus (A \cup B)$. Then,
\begin{equation} 
\label{probastar}
\PP_x [ H_A < H_B ] \;=\; 
\frac{\PP_x [\, H_A < H_{B \cup \lbrace x \rbrace}^+\,]}
{\PP_x [\,H_{A \cup B} < H_x^+ \,]} \;\cdot
\end{equation}
Indeed, intersecting the event $\lbrace H_A < H_B \rbrace$ with
$\lbrace H_x^+ < H_{A \cup B} \rbrace$ and its complement, by the
Strong Markov property,
\begin{equation*}
\PP_x [H_A < H_B] \;=\; \PP_x [\,H_x^+ < H_{A \cup B}\,] \, \PP_x [H_A <
H_B] \;+\; \PP_x [\,H_A < H_{B \cup \lbrace x \rbrace}^+\,] \;,
\end{equation*}
which proves \eqref{probastar} by substracting the first term on the
right hand side from the left hand side.

Recall the definition of the capacity introduced in \eqref{defcapa}.
Multiplying and dividing the right hand side of \eqref{probastar} by
$M(x)$ yields that
\begin{equation*} 
\PP_x [H_A < H_B] \;=\; 
\frac{M(x) \, \PP_x [ \, H_A < H_{B \cup \lbrace x \rbrace}^+\, ]}
{\capa(x,A \cup B)} 
\;\leq\; \frac{M(x) \,\PP_x [\, H_A < H_x^+\, ]}
{\capa(x,A\cup B)} \;\cdot
\end{equation*} 
Therefore, by definition of the capacity and since, by
\eqref{dirichlet}, the capacity is monotone,
\begin{equation} 
\label{probatriangle}
\PP_x [H_A < H_B] \;\leq\; \frac{\capa(x,A)}
{\capa(x,A\cup B)} \;\leq\;
\frac{\capa(x,A)}{\capa(x,B)} \;\cdot 
\end{equation}
	
\smallskip
\noindent{\bf Trace process.}
We recall in this subsection the definition of the trace of a Markov
process on a proper subset of the state space. Fix $F\subsetneq E$ and
denote by $T_{F} (t)$ the time the process $X_t$ spent on the set $F$
in the time-interval $[0,t]$:
\begin{equation*}
T_F(t)
\;=\; \int_0^{t} \chi_F( X_s) \, ds\; ,
\end{equation*}
where $\chi_F$ represents the indicator function of the set $F$.
Denote by $S_F(t)$ the generalized inverse of the additive functional
$T_F(t)$:
\begin{equation*}
S_F(t) \;=\; \sup\{s\ge 0 : T_F(s) \le t\}\,. 
\end{equation*}
The recurrence guarantees that for all $t>0$, $S_F(t)$ is finite
almost surely. 

Denote by $X^F(t)$ the \emph{trace} of the chain $X_t$ on the set $F$,
defined by $X^F(t) := X(S_F(t))$.  It can be proven \cite{bl2} that
$X^F(t)$ is an irreducible, recurrent, continuous-time, $F$-valued
Markov chain. The jump rates of the chain $X^F(t)$, denoted by
$r_F(x,y)$, are given by
\begin{equation*}
r_F(x,y) \;=\; \lambda(x) \, \bb P_x\big[\, H^+_{F}
= H_y\, \big] \;, \quad x\,,\, y\,\in\, F\;, \quad
x\,\not=\, y \;.
\end{equation*}
The unique stationary probability measure for the trace chain, denoted
by $\pi_F$, is given by $\pi_F(x)=\pi(x)/\pi(F)$. Moreover, $\pi_F$ is
reversible if so is $\pi$.

\smallskip
\noindent{\bf Estimates of an eigenfunction.}
We derive in this subsection an estimate needed in the next sections.
Consider the continuous-time Markov chain $X_t$ on $E=\{0, \dots, n\}$
which jumps from $k$ to $k+1$ at rate $\epsilon$ and from $k+1$ to $k$
at rate $1$, $0\le k<n$.

Denote by $\mb P_k$ the distribution of the Markov chain $X_t$
starting from $k\in E$. Expectation with respect to $\mb P_k$ is
represented by $\mb E_k$.

Denote by $H_n$ the hitting time of $n$. Fix $\theta>0$, and let
$f:E\to \bb R_+$ be given by 
\begin{equation*}
f(k) \;=\; \mb E_k\big[ e^{-\theta\, H_n}\big]\;.
\end{equation*}
An elementary computation based on the strong Markov property shows
that $f$ is the solution of the boundary-valued elliptic problem 
\begin{equation*}
\left\{
\begin{aligned}
& (L f)(k) \;=\; \theta\, f(k)\;, \quad 0\le k<n\;, \\
& f(n) \;=\; 1\;,  
\end{aligned}
\right.
\end{equation*}
where $L$ stands for the generator of the Markov chain $X_t$.

\begin{lemma}
\label{ml3}
We have that $f(0) \le \epsilon^n/\theta$.
\end{lemma}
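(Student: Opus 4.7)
The plan is to exploit the fact that the chain is reversible with a very explicit invariant measure, and then use a single application of the stationarity identity $\sum_k \nu(k)(Lf)(k)=0$, chosen so that the right-hand side is forced to contain $\epsilon^n$.

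First, I would observe that the birth-and-death chain $X_t$ on $\{0,\dots,n\}$ with birth rate $\epsilon$ and death rate $1$ satisfies the detailed balance identity $\nu(k)\,\epsilon=\nu(k+1)\cdot 1$, so $\nu(k)=\epsilon^{k}$ defines an invariant measure. Consequently, for any function $g:E\to\R$,
\begin{equation*}
\sum_{k=0}^{n}\epsilon^{k}(Lg)(k)\;=\;0\;.
\end{equation*}
The point is that the geometric weight $\epsilon^{k}$ is exactly the weight that appears in the desired bound $\epsilon^{n}/\theta$.

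Next I would apply this identity to $g=f$. By the boundary-value problem satisfied by $f$, one has $(Lf)(k)=\theta\,f(k)$ for $0\le k<n$, while at the endpoint $k=n$ the only possible jump is to $n-1$ at rate $1$, so $(Lf)(n)=f(n-1)-f(n)=f(n-1)-1$. Substituting into the stationarity relation gives
\begin{equation*}
\theta\,\sum_{k=0}^{n-1}\epsilon^{k}\,f(k)\;=\;\epsilon^{n}\bigl(1-f(n-1)\bigr)\;.
\end{equation*}

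Finally, from the probabilistic representation $f(k)=\mb E_k[e^{-\theta H_n}]$ we have $0\le f\le 1$ everywhere, so the right-hand side is at most $\epsilon^{n}$, and on the left-hand side the $k=0$ term already satisfies $\theta\,f(0)\le\theta\sum_{k=0}^{n-1}\epsilon^{k}f(k)$. Dividing by $\theta$ yields $f(0)\le\epsilon^{n}/\theta$, as desired.

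I do not anticipate a genuine obstacle here: the only non-mechanical step is recognizing that one should test the stationarity identity against the invariant measure $\nu(k)=\epsilon^{k}$ rather than (say) against $\mathbf 1$. Once this is observed, the conclusion is immediate because the eigenvalue equation $Lf=\theta f$ converts the bulk contribution into $\theta\,\sum\epsilon^{k}f(k)$ while the boundary at $n$ contributes exactly the factor $\epsilon^{n}$.
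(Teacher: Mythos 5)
Your proof is correct and follows essentially the same route as the paper: both multiply the eigenvalue identity $(Lf)(k)=\theta f(k)$ by the weight $\epsilon^{k}$ and sum, arriving at the identity $\theta\sum_{k=0}^{n-1}\epsilon^{k}f(k)=\epsilon^{n}(1-f(n-1))$ and then using $0\le f\le 1$. Your observation that $\epsilon^{k}$ is the reversible invariant measure is a clean way to package the paper's explicit telescoping computation, but the underlying argument is the same.
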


\begin{proof}
Multiplying the identity $(L f)(k) = \theta\, f(k)$ by $\epsilon^k$ and
summing over $0\le k<n$ yields that
\begin{equation*}
\sum_{k=0}^{n-1} \epsilon^{k+1} \, [f(k+1) - f(k)]  \;+\;
\sum_{k=1}^{n-1} \epsilon^k \, [f(k-1) - f(k)] \;=\;
\theta \sum_{k=0}^{n-1} f(k)\, \epsilon^k \;. 
\end{equation*}
On the left-hand side, all terms but one cancel so that
\begin{equation*}
\epsilon^{n} \, [f(n) - f(n-1)] \;=\;
\theta \sum_{k=0}^{n-1} f(k)\, \epsilon^k \;.
\end{equation*}
Since $f(k) \ge 0$ and $f(n)=1$, we have that
\begin{equation*}
\theta\, f(0) \;\le\; \theta \sum_{k=0}^{n-1} f(k)\, \epsilon^k \;=\; 
\epsilon^{n} \, [1 - f(n-1)]  \;\le\; \epsilon^{n}\;,
\end{equation*}
as claimed.
\end{proof}

This result has a content only in the case $\epsilon <1$, but we did
not use this condition in the proof.

\section{The emergence of a critical droplet}
\label{energy} 

In this section, we prove that starting from $\moins$, the process
creates a droplet of 0-spins on its way to $\lbrace \zero, \plus
\rbrace$, that is, a configuration $\s$ with a $n_0 \times (n_0 +1)$
rectangle of 0-spins (or \textit{0-rectangle}) and an extra 0-spin
attached to one of the sides of the rectangle, in a background of
negative spins. 

In the next section, we prove that if this extra $0$-spin is attached
to one of the longest sides of the rectangle, with a positive
probability the process hits $\zero$ before $\{\moins, \plus\}$, while
if it is attached to one of the shortest sides, with probability close
to $1$, the process returns to $\moins$ before hitting $\{\zero,
\plus\}$.  An important feature of this model is that the size of a
critical droplet is independent of $\b$ and $L$.
	
Throughout this section, $C_0$ is a large constant, which does not
depend on $\beta$ or $L$ but only on $h$, and whose value may change
from line to line. 

Recall the definition of the valley $\ms V_\moins$ introduced just
above equation \eqref{8-3}.  Note that there are few configurations in
$\ms V_\moins$ which differ from $\moins$ at $n_0(n_0+1)+1$
sites. Moreover, such configurations
\begin{equation}
\label{m05}
\text{may not have two spins equal to $+1$.}
\end{equation}

To define the boundary of the valley of $\ms V_\moins$, fix $L$ large,
and denote by $\mf B$ the set of configurations with $n_0(n_0+1)$
spins different from $-1$:
\begin{equation*}
\mf B \;=\; \big\{\sigma\in \Omega_L : 
|A(\sigma)| = n_0(n_0+1) \big\}\;,
\end{equation*}
where 
\begin{equation*}
A(\sigma) \;=\; \{x\in\Lambda_L : \sigma(x) \not = -1\}\;.
\end{equation*}
Denote by $\mf R$ the subset of $\mf B$ given by
\begin{equation*}
\mf R \;=\; \big\{ \sigma\in \{-1,0\}^{\Lambda_L} : 
A(\sigma) \text{ is a } n_0\times (n_0+1) \text{rectangle }\big\}
\;.
\end{equation*}
Note that the spins of a configuration $\sigma\in \mf R$ are either
$-1$ or $0$ and that all configurations in $\mf R$ have the same
energy.

Denote by $\mf R^+$ the set of configurations in $\Omega_L$ in which
there are $n_0(n_0+1)+1$ spins which are not equal to $-1$. Of these
spins, $n_0(n_0+1)$ form a $n_0\times (n_0+1)$-rectangle of $0$
spins. The remaining spin not equal to $-1$ is either $0$ or $+1$.
Figure \ref{fig1} present some examples of configurations in $\mf
R^+$. 

\begin{figure}[!h]
	\centering
	\begin{tikzpicture}[scale = .25]
	\foreach \y in {0, ..., 4}
	\foreach \x in {0, ..., 5}
	\draw[very thick] (\x, \y) -- (\x +1, \y) -- (\x
	+ 1, \y + 1) -- (\x , \y + 1) -- (\x , \y );
	\draw[very thick] (-1, 4) -- (0, 4) -- (0, 5) -- (-1 , 5) -- 
	(-1 , 4);
\fill[black!100] (-1,4) -- (0,4) -- (0,5) -- (-1,5) -- (-1,4);
	\foreach \x in {19, ..., 24}
	\foreach \y in {0, ..., 4}
	\draw[very thick] (\x, \y) -- (\x +1, \y) -- (\x
	+ 1, \y + 1) -- (\x , \y + 1) -- (\x , \y );
	\draw[very thick] (27, 1) -- (28, 1) -- (28, 2) -- (27, 2) -- (27, 1);
	\end{tikzpicture}
	\caption{Examples of two  configurations in ${\mf R}^+$ in the
          case where $n_0=5$. An empty (resp. filled) $1\times 1$
          square centered at $x$ has been placed at each site $x$
          occupied by a $0$-spin (resp. positive spin).  All the other
          spins are equal to $-1$.}
	\label{fig1}
\end{figure}
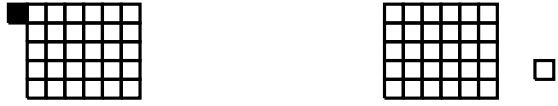

Let $\mf B^+$ be the boundary of $\ms V_\moins$. This set consists of
all configurations $\sigma$ in $\ms V_\moins$ which have a neighbor
[that is, a configuration $\sigma'$ which differs from $\sigma$ at one
site] which does not belong to $\ms V_\moins$. By definition of $\ms
V_\moins$, 
\begin{equation*}
\mf B^+ \;=\; (\mf B \setminus \mf R) \;\cup\;  \mf R^+ \;.
\end{equation*}

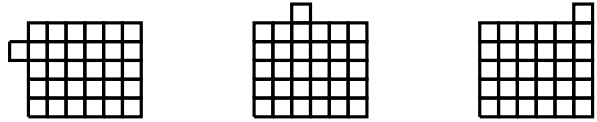
\begin{figure}[!h]
	\centering
	\begin{tikzpicture}[scale = .25]
	\foreach \y in {0, ..., 4}
	\foreach \x in {0, ..., 5}
	\draw[very thick] (\x, \y) -- (\x +1, \y) -- (\x
	+ 1, \y + 1) -- (\x , \y + 1) -- (\x , \y );
	\draw[very thick] (-1, 4) -- (0, 4) -- (0, 3) -- (-1 , 3) -- 
	(-1 , 4);
\foreach \y in {0, ..., 4}
	\foreach \x in {12, ..., 17}
	\draw[very thick] (\x, \y) -- (\x +1, \y) -- (\x
	+ 1, \y + 1) -- (\x , \y + 1) -- (\x , \y );
	\draw[very thick] (14, 5) -- (15, 5) -- (15, 6) -- (14 , 6) -- 
	(14 , 5);	
\foreach \y in {0, ..., 4}
	\foreach \x in {24, ..., 29}
	\draw[very thick] (\x, \y) -- (\x +1, \y) -- (\x
	+ 1, \y + 1) -- (\x , \y + 1) -- (\x , \y );
	\draw[very thick] (29, 5) -- (30, 5) -- (30, 6) -- (29 , 6) -- 
	(29 , 5);	
	\end{tikzpicture}
	\caption{Example of three configurations in ${\mf R}^a$ in the
          case where $n_0=5$. An $1\times 1$ empty square centered at
          $x$ has been placed at each site $x$ occupied by a $0$-spin.
          All the other spins are equal to $-1$. The one on the left
          belongs to $\mf R^s$. According to the notation introduced at the
          beginning of Section \ref{proofs1}, the one on the center
          belongs to $\mf R^{li}$ and the one on the right to $\mf
          R^{lc}$.}
	\label{fig2}
\end{figure}

Let $\mf R^a \subset \mf R^+$ be the set of configurations for which
the remaining spin is a $0$ spin attached to one of the sides of the
rectangle. Figure \ref{fig2} present some examples of configurations
in $\mf R^a$. We write the boundary $\mf B^+$ as 
\begin{equation}
\label{eq09} 
\mf B^+ \;=\; (\mf B \setminus \mf R) \;\cup\; (\mf R^+
\setminus \mf R^a) \;\cup\; \mf R^a \;. 
\end{equation}

Since $\mf B^+$ is the boundary of the valley $\ms V_\moins$, starting
from $\bf -1$, it is reached before the chain attains the set $\{{\bf
  0}, {\bf +1}\}$:
\begin{equation}
\label{ff02}
H_{\mf B^+} \;<\; H_{\{{\bf 0}, {\bf +1}\}} \quad 
\bb P_{\bf -1} \text{ a.s.}
\end{equation}

Note that all configurations of $\mf R^a$ have the same
energy and that $\bb H(\xi) = \bb H(\zeta)+2 -h$ if $\xi\in \mf R^a$,
$\zeta\in \mf R$. In particular, by Assertion 4.D in \cite{ll},
\begin{equation}
\label{eq07}
\bb H(\sigma) \;\ge\; \bb H(\xi) \;+\; h \;, \quad 
\sigma\in \mf B \setminus \mf R\;\;,\;\;
\xi\in \mf R^a\;.
\end{equation}
On the other hand, for a configuration $\eta\in\mf R^+ \setminus \mf
R^a$, $\bb H(\eta) \ge \bb H(\zeta)+4 -h$ if $\zeta\in \mf R$, so that
\begin{equation}
\label{eq08}
\bb H(\eta) \;\ge\; \bb H(\xi) \;+\; 2 \;, \quad 
\eta\in \mf R^+ \setminus \mf R^a\;\;,\;\;
\xi\in \mf R^a\;.
\end{equation}

In particular, at the boundary $\mf B^+$ the energy is minimized by
configurations in $\mf R^a$. This means that $\sigma_t$ should
attained $\mf B^+$ at $\mf R^a$. This is the content of the main
result of this section. Let
\begin{equation}
\label{m6}
\epsilon (\beta) \;=\; |\Lambda_L|\, e^{- 2 \b} \;+\; e^{-h \beta}\;.
\end{equation}

\begin{proposition}
\label{Lemma1}
There exists a finite constant $C_0$ such that
\begin{equation*}
\PP_\moins [ H_{\mf B^+} <  H_{\mf R^a} ] \;\le\; 
C_0\, \epsilon (\beta)
\end{equation*}
for all $\beta \ge C_0$.
\end{proposition}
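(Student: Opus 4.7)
The plan is to combine the hitting-time estimate \eqref{probatriangle} with three capacity bounds. Since $\mf R^a \subset \mf B^+$, the event $\{H_{\mf B^+} < H_{\mf R^a}\}$ coincides with $\{H_{\mf B^+ \setminus \mf R^a} < H_{\mf R^a}\}$. Using the decomposition \eqref{eq09}, a union bound and \eqref{probatriangle} give
\[
\PP_\moins[H_{\mf B^+} < H_{\mf R^a}] \;\le\; \frac{\capa(\moins,\mf B\setminus\mf R)\;+\;\capa(\moins,\mf R^+\setminus\mf R^a)}{\capa(\moins,\mf R^a)}.
\]
It therefore suffices, for $\xi\in\mf R^a$, to prove
\begin{align*}
\capa(\moins,\mf R^a) &\;\ge\; c\,|\Lambda_L|\,\mu_\beta(\xi), \\
\capa(\moins,\mf B\setminus\mf R) &\;\le\; C\,|\Lambda_L|\,e^{-h\beta}\,\mu_\beta(\xi), \\
\capa(\moins,\mf R^+\setminus\mf R^a) &\;\le\; C\,|\Lambda_L|^2\,e^{-2\beta}\,\mu_\beta(\xi),
\end{align*}
since then the two ratios are $O(e^{-h\beta})$ and $O(|\Lambda_L|\,e^{-2\beta})$, whose sum is $O(\epsilon(\beta))$.

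For the lower bound, I apply Thomson's principle \eqref{thom} to a flow built from $\Theta(|\Lambda_L|)$ essentially disjoint nucleation paths, one per admissible rectangle position in the torus. Each path grows a $0$-droplet by successively flipping spins through $1\times 2$, $2\times 2$, $2\times 3,\ldots, n_0\times(n_0+1)$ rectangles, finally attaches one extra $0$-spin to a long side, and carries flow $1/|\Lambda_L|$. The cost $\phi^2/(\mu \wedge \mu)$ is dominated by the last edge, into $\mf R^a$, where $\mu \wedge \mu = \mu_\beta(\xi)$; summing over the $|\Lambda_L|$ paths yields a total cost of order $1/(|\Lambda_L|\,\mu_\beta(\xi))$.

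For the upper bounds, the Dirichlet principle \eqref{dirichlet} applied to the test function $f = \ind_{A^c}$, together with reversibility, yields $\capa(\moins, A) \le \sum_{\eta\in A} \mu_\beta(\eta)\,\lambda_\beta(\eta)$. Under \eqref{8-3}, $\lambda_\beta(\eta)$ is bounded uniformly on $\mf B^+$ by a constant: the $O(1)$ low-cost flips around the droplet contribute $O(1)$, while the $|\Lambda_L|$ flips far from the droplet contribute at most $C|\Lambda_L|e^{-(4-h)\beta} = o(1)$. Combined with the energy estimates \eqref{eq07} and \eqref{eq08}, and with the counts $|\mf R^+ \setminus \mf R^a| = O(|\Lambda_L|^2)$ and, for $\mf B\setminus\mf R$, $O(|\Lambda_L|)$ configurations at perimeter excess $2$ with geometrically decaying contributions at higher excess, the two upper bounds follow.

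The main obstacle is the enumeration of $\mf B\setminus\mf R$ by perimeter excess: one must show that the number of configurations with perimeter $p_{\min}+2k$ is bounded by $C|\Lambda_L|$ times a factor growing sufficiently slowly in $k$ for $\sum_k (\cdot)\,e^{-2k\beta}$ to remain dominated by its $k = 1$ term. This is an isoperimetric-type enumeration of the kind carried out in \cite{ll} around Assertion 4.D, which can be invoked directly and plugs into the capacity bound above.
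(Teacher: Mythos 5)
Your architecture is the same as the paper's: bound $\PP_\moins[H_{\mf B^+\setminus\mf R^a}<H_{\mf R^a}]$ by a ratio of capacities via \eqref{probatriangle}, get the lower bound $\capa(\moins,\mf R^a)\ge c\,|\Lambda_L|\,\mu_\beta(\xi)$ from a Thomson flow built from $|\Lambda_L|$ disjoint nucleation paths each carrying mass $1/|\Lambda_L|$ (this is exactly Lemma \ref{ml4}), and get the upper bound on the numerator from the Dirichlet principle with an indicator test function (Lemma \ref{fl1}). The two deviations from the paper are harmless: you work with the full chain rather than the process reflected at $\ms V_\moins$, which is legitimate because \eqref{probatriangle} holds for the full chain and your cruder numerator bound $\sum_{\eta}\mu_\beta(\eta)\lambda_\beta(\eta)$ absorbs the extra transitions out of the valley (they cost at most $C\,e^{-(2-h)\beta}+C\,|\Lambda_L|\,e^{-(4-h)\beta}=o(1)$ per configuration, while the rate-one flips already contribute $O(1)$); and you split $\mf B^+\setminus\mf R^a$ by subadditivity of the capacity instead of computing one Dirichlet form, which is cosmetic.

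The one place you should not wave your hands is the step you yourself call the main obstacle. The bound $\sum_{\sigma\in\mf B\setminus\mf R}\mu_\beta(\sigma)\le C\,|\Lambda_L|\,e^{-h\beta}\mu_\beta(\xi)$ cannot be ``invoked directly'' from the fixed-volume enumeration in \cite{ll}: on a torus growing with $\beta$ the new difficulty is precisely that a configuration whose $n_0(n_0+1)$ non-minus spins split into $j$ connected components can be placed in $O(|\Lambda_L|^{j})$ ways, so the combinatorial factor is not ``slowly growing in $k$'' but polynomial in $|\Lambda_L|$ of degree growing with the excess. What saves the sum is that such a configuration has energy at least $\Gamma_c+2(j-1)+h$ (the paper's Assertion \ref{compo}, after first replacing all $+1$ spins by $0$ spins at the cost of a factor $2^{n_0(n_0+1)}$, Assertion \ref{fasB}), so the series is $\sum_j(|\Lambda_L|\,e^{-2\beta})^{j-1}$, which is summable exactly because of hypothesis \eqref{8-3}. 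You invoke \eqref{8-3} only to bound the holding rates; you must also invoke it here, and the elimination of $+1$ spins must be done explicitly since $\mf B\setminus\mf R$ is not contained in $\{-1,0\}^{\Lambda_L}$. With that step filled in, your proof is correct and coincides with the paper's.
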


\begin{proof}
The proof of this lemma is divided in several steps. Denote by
$\{\eta_t : t\ge 0\}$ the process obtained from the Blume-Capel model
by forbiding any jump from the valley $\ms V_\moins$ to its
complement. This process is sometimes called the reflected process.

It is clear that $\eta_t$ is irreducible and that its stationary
state, denoted by $\mu_{\ms V}$ is given by $\mu_{\ms V} (\sigma) =
(1/Z_{\ms V}) \, \exp\{ - \beta \bb H(\sigma)\}$, where $Z_{\ms V}$ is
a normalizing constant.

Moreover, starting from $\moins$, we may couple $\sigma_t$ with
$\eta_t$ in such a way that $\sigma_t = \eta_t$ until they hit the
boundary. Hence, if we denote by $\PP^{\ms V}_\moins$ the distribution
of $\eta_t$,
\begin{equation*}
\PP_\moins [ H_{\mf B^+} <  H_{\mf R^a} ] \;=\; \PP^{\ms V}_\moins [ H_{\mf
  B^+} <  H_{\mf R^a} ] \;.
\end{equation*}
By \eqref{probatriangle}, 
\begin{equation*}
\PP^{\ms V}_\moins [ H_{\mf B^+} < H_{\mf R^a}] \;=\; 
\PP^{\ms V}_\moins [ H_{\mf B^+ \setminus \mf R^a} < H_{\mf R^a}] 
\;\leq\; \frac{\capa_{\ms V}(\mf B^+ \setminus \mf R^a, \moins)}
{\capa_{\ms V}(\mf R^a, \moins)}\;,
\end{equation*}
where $\capa_{\ms V}$ represents the capacity with respect to the
process $\eta_t$. The lemma now follows from Lemmata \ref{ml4} and
\ref{fl1} below.
\end{proof}

Denote by $\Gamma_c$ the energy of a configuration $\s \in {\mf R}^a$ :
\begin{equation} 
\label{gammac}
\Gamma_c \;=\; 4\, (n_0 +1) \,-\,  h\, \big[\, n_0(n_0+1) \,+\, 1 \,-\,
|\Lambda_L| \, \big] \;. 
\end{equation}

\begin{lemma}	
\label{ml4}
There exists a finite constant $C_0$ such that
\begin{equation*}
\frac 1{\capa_{\ms V}(\mf R^a, \moins)} \;\leq\; C_0 \, 
\frac 1{|\Lambda_L|}\, Z_{\ms V}  \, e^{\b \Gamma_c} \;.
\end{equation*}
\end{lemma}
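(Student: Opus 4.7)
The plan is to apply the Thomson principle \eqref{thom} to the reflected dynamics and to construct a unit flow from $\moins$ to $\mf R^a$ whose support consists of $\sim|\Lambda_L|$ essentially disjoint canonical paths; the translation symmetry of the torus will then deliver the $1/|\Lambda_L|$ gain.

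\emph{Flow construction.} Each $\xi\in\mf R^a$ is determined by an $n_0\times(n_0+1)$ rectangle $R\subset\Lambda_L$ together with a boundary site of $R$ carrying the extra $0$-spin, so $|\mf R^a|\ge c_0\,|\Lambda_L|$ for a combinatorial constant $c_0=c_0(n_0)$. For each $\xi\in\mf R^a$ I choose a self-avoiding path $\gamma_\xi$ of length $M:=n_0(n_0+1)+1$ that grows $R$ through the monotone sequence $1\times 1 \to 1\times 2 \to 2\times 2 \to 2\times 3 \to \cdots \to n_0\times n_0 \to n_0\times(n_0+1)$, adding one row or column at a time and filling each from a fixed endpoint, and that attaches the extra spin in the last step. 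Setting $\phi(\sigma,\sigma'):=N(\sigma,\sigma')/|\mf R^a|$, where $N(\sigma,\sigma')$ counts the paths $\gamma_\xi$ traversing the oriented edge $(\sigma,\sigma')$, defines a unit flow from $\moins$ to $\mf R^a$.

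\emph{Energy along paths.} Flipping a $-1$ spin with $k\in\{0,\ldots,4\}$ already-$0$ neighbours changes the energy by $4-2k-h$. A step-by-step calculation shows that $\bb H(\sigma_j)\le\Gamma_c$ for every intermediate configuration of $\gamma_\xi$, with equality attained only at the endpoint. The crucial inequality is $n_0 h<2<(n_0+1)h$ (which follows from $n_0=\lfloor 2/h\rfloor$ together with $2/h\notin\bb Z$): it forces the square-then-extend strategy, since growing directly from $(n_0-1)\times(n_0+1)$ to $n_0\times(n_0+1)$ would produce an intermediate state of energy $\Gamma_c+[(n_0+1)h-2]>\Gamma_c$, whereas the extension $n_0\times n_0 \to n_0\times(n_0+1)$ keeps its local peak exactly $2-n_0 h>0$ below $\Gamma_c$.

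\emph{Edge multiplicity and conclusion.} Paths $\gamma_\xi$ for rectangles with distinct canonical starting corners visit disjoint sets of configurations at every step; paths sharing the same starting corner (at most two, differing by orientation) coincide only during the first $n_0^2$ steps; and for each fixed rectangle the $2(2n_0+1)$ paths indexed by the extra spin differ only in their last edge. Hence $N(\sigma,\sigma')\le C(n_0)$ uniformly, and since $\sum_{(\sigma,\sigma')}N(\sigma,\sigma')=M\,|\mf R^a|$, we obtain $\sum N^2 \le C(n_0)\,M\,|\mf R^a|$. Combining this with the detailed-balance identity $\mu_{\ms V}(\sigma)R(\sigma,\sigma')=Z_{\ms V}^{-1}e^{-\beta\max[\bb H(\sigma),\bb H(\sigma')]}\ge Z_{\ms V}^{-1}e^{-\beta\Gamma_c}$ on the support of $\phi$, the Thomson principle yields
\begin{equation*}
\frac{1}{\capa_{\ms V}(\mf R^a,\moins)}\;\le\;\frac{Z_{\ms V}\,e^{\beta\Gamma_c}}{|\mf R^a|^2}\,\sum_{(\sigma,\sigma')}N(\sigma,\sigma')^2\;\le\;\frac{C_0\,Z_{\ms V}\,e^{\beta\Gamma_c}}{|\Lambda_L|}\,,
\end{equation*}
which is the desired bound. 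The main obstacle is the energy bookkeeping of Step 2, which requires identifying the correct (non-naive) growth sequence and verifying that every local peak lies at or below $\Gamma_c$; everything else reduces to a standard flow-counting argument.
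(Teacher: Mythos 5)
Your proposal is correct and follows essentially the same route as the paper: a Thomson-principle bound via canonical growth paths (squares $1\times1\to\cdots\to n_0\times n_0$, then the extra row, then the protuberance), with translation invariance of the torus supplying the factor $1/|\Lambda_L|$ and the observation that every intermediate energy is at most $\Gamma_c$ reducing the Thomson sum to $C_0\,Z_{\ms V}e^{\beta\Gamma_c}/|\Lambda_L|$. The only cosmetic difference is that you spread mass $1/|\mf R^a|$ over overlapping paths indexed by target configurations (hence the bounded edge-multiplicity count), whereas the paper uses $|\Lambda_L|$ pairwise disjoint paths, one per translation, each carrying mass $1/|\Lambda_L|$.
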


\begin{proof}
We use the Thomson principle to bound this capacity by constructing a
flow from $\moins$ to $\mf R_a$. The flow is constructed in two
stages. 

To explain the procedure we interpret a flow as a mass transport,
$\phi(\eta,\xi)$ representing the total mas transported from $\eta$ to
$\xi$. The goal is to define the transport of a mass equal to $1$ from
$\moins$ to $\mf R^a$. The first step consists in transferring the
mass from $\moins$ to $\mf R$.

\begin{figure}
\centering
\begin{tikzpicture}[scale = .2]
\foreach \y in {0, ..., 3}
\foreach \x in {0, ..., 3}
\draw[very thick] (\x, \y) -- (\x +1, \y) -- (\x
+ 1, \y + 1) -- (\x , \y + 1) -- (\x , \y );
%
\foreach \x in {8, ..., 11}
\foreach \y in {0, ..., 3}
\draw[very thick] (\x, \y) -- (\x +1, \y) -- (\x
+ 1, \y + 1) -- (\x , \y + 1) -- (\x , \y );
\draw[very thick] (8, 4) -- (9,4) -- (9,5) -- (8,5) -- 
(8,4);
\foreach \x in {16, ..., 19}
\foreach \y in {0, ..., 3}
\draw[very thick] (\x, \y) -- (\x +1, \y) -- (\x
+ 1, \y + 1) -- (\x , \y + 1) -- (\x , \y );
\foreach \x in {16, ..., 17}
\foreach \y in {4}
\draw[very thick] (\x, \y) -- (\x +1, \y) -- (\x
+ 1, \y + 1) -- (\x , \y + 1) -- (\x , \y );
\foreach \x in {24, ..., 27}
\foreach \y in {0, ..., 4}
\draw[very thick] (\x, \y) -- (\x +1, \y) -- (\x
+ 1, \y + 1) -- (\x , \y + 1) -- (\x , \y );
\foreach \x in {32, ..., 35}
\foreach \y in {0, ..., 4}
\draw[very thick] (\x, \y) -- (\x +1, \y) -- (\x
+ 1, \y + 1) -- (\x , \y + 1) -- (\x , \y );
\foreach \x in {35, ..., 36}
\foreach \y in {4}
\draw[very thick] (\x, \y) -- (\x +1, \y) -- (\x
+ 1, \y + 1) -- (\x , \y + 1) -- (\x , \y );

\foreach \x in {40, ..., 43}
\foreach \y in {0, ..., 4}
\draw[very thick] (\x, \y) -- (\x +1, \y) -- (\x
+ 1, \y + 1) -- (\x , \y + 1) -- (\x , \y );
\foreach \x in {41, ..., 44}
\foreach \y in {3,4}
\draw[very thick] (\x, \y) -- (\x +1, \y) -- (\x
+ 1, \y + 1) -- (\x , \y + 1) -- (\x , \y );
\foreach \x in {49, ..., 53}
\foreach \y in {0, ..., 4}
\draw[very thick] (\x, \y) -- (\x +1, \y) -- (\x
+ 1, \y + 1) -- (\x , \y + 1) -- (\x , \y );
\end{tikzpicture}
\caption{We present in this figure some configurations 
  $\zeta_{x,k}$ introduced in the proof of Lemma \ref{ml4}. Let
  $m=16$. The figure represent the configurations $\zeta_{x,m}$,
  $\zeta_{x,m+1}$, $\zeta_{x,m+2}$. Then, $\zeta_{x,m+4}$,
  $\zeta_{x,m+5}$, $\zeta_{x,m+6}$, and $\zeta_{x,m+9}$. An $1\times
  1$ empty square centered at $x$ has been placed at each site $x$
  occupied by a $0$-spin. All the other spins are equal to $-1$.}
\label{fig3}
\end{figure}
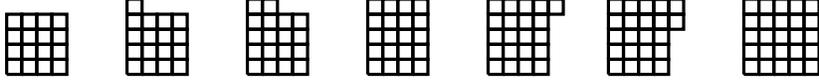

This is done as follows. Consider the sequence of points in $\bb Z^2$
which forms a succesion of squares of length $1$, $2$ up to $n_0$. It
is given by $u_1 = (1,1)$, $u_2 = (1,2)$, $u_3=(2,2)$, $u_4 = (2,1)$,
$u_5 = (1,3)$ and so on until $u_{n_0^2} = (n_0,1)$. Hence, we first
add a new line on the upper side of the square from left to right, and
then a new column on the right side from top to bottom. Once we
arrived at the $(n_0\times n_0)$-square, we add a final row at the
upper side of the square: Let $u_{n_0^2+k} = (k,n_0+1)$, $1\le k\le
n_0$, to obtain a $n_0\times(n_0+1)$-rectangle. 

Note that we reach through this procedure only rectangles whose height
is larger than the length. We could have defined flows which reach
both types of rectangles, but the bound would not improve
significantly. 

Let $A_k = \{u_1, \dots, u_k\}$ and denote by $A_{x,k}$ the set $A_k$
translated by $x\in \bb Z^2$: $A_{x,k} = x + A_k$. Denote by
$\zeta_{x,k}$ the configuration with $0$-spins at $A_{x,k}$ and
$-1$-spins elsewhere. Figure \ref{fig3} presents some of these
configurations. Let $\epsilon = 1/|\Lambda_L|$. The first stage of the
flow consists in transferring a mass $\epsilon$ from $\moins$ to each
$\zeta_{x,1}$ and then transfer this mass from $\zeta_{x,k}$ to
$\zeta_{x,k+1}$ for $1\le k<n_0(n_0+1)$.

Let $u_{n_0^2+n_0+1} = (1,n_0+2)$, and consider the configurations
$\zeta_{x,n_0^2+n_0+1}$ obtained through the correspondance adopted
above. The final stage consists in transferring the mass $\epsilon$
from $\zeta_{x,n_0^2+n_0}$ to $\zeta_{x,n_0^2+n_0+1}$. 

Since each configuration $\zeta_{x,n_0^2+n_0+1}$ belongs to $\mf R^a$,
the total effect of this procedure is to transport a mass equal to $1$
from the configuration $\moins$ to the set $\mf R^a$. 

Denote this flow by $\phi$, so that $\phi(\moins, \zeta_{x,1}) =
\phi(\zeta_{x,k} , \zeta_{x,k+1}) = \epsilon$. We extend this flow by
imposing it to be anti-symmetric and to vanish on the other bonds. It
is clear that this flow belongs to $\mc F_{\moins, \mf R^a}$, the set
of flows defined above \ref{thom}. Therefore, by the Thomson
principle,
\begin{equation*}
\frac 1{\capa_{\ms V}(\mf R^a, \moins)} \;\leq\; \epsilon^2\,
|\Lambda_L|\, \sum_{k=0}^{n_0(n_0+1)} 
\frac 1{\mu_{\ms V}(\zeta_{1,k})\, \wedge \mu_{\ms  V}(\zeta_{1,k+1})}
\;\cdot
\end{equation*}
Since $\epsilon = 1/|\Lambda_L|$ and $\mu_{\ms V}(\zeta_{1,k}) \ge
\mu_{\ms V}(\zeta_{1,n_0(n_0+1)+1})$, the previous expression is
bounded by
\begin{equation*}
\frac 1{\capa_{\ms V}(\mf R^a, \moins)} \;\leq\; C_0\, 
\frac 1{|\Lambda_L|} \, \frac 1{\mu_{\ms V}(\zeta_{1,n_0(n_0+1)+1})} \;,
\end{equation*}
which completes the proof of the lemma because the energy of the
configuration $\zeta_{1,n_0(n_0+1)+1}$ is $\Gamma_c$.
\end{proof}

We turn to the upper bound for $\capa(\mf B^+ \setminus \mf R^a,
\moins)$. 

\begin{lemma}
\label{fl1}
There exists a finite constant $C_0$ such that
\begin{equation*}
\capa_{\ms V}(\moins,\mf B^+ \setminus \mf R^a) 
\;\le \;  C_0\, \frac{1}{Z_{\ms V}} \, |\Lambda_L|\, e^{-\b \Gamma_c} \, \Big\{
|\Lambda_L|\, e^{- 2 \b} \,+\, e^{-h \beta} \Big\} \;.
\end{equation*}
for all $\beta \ge C_0$.
\end{lemma}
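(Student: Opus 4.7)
The plan is to apply the Dirichlet principle \eqref{dirichlet} to the reflected chain $\eta_t$ with the indicator test function $f = \mathbf 1_F$, where
\begin{equation*}
F \;:=\; \ms V_\moins \setminus (\mf B^+ \setminus \mf R^a)
\;=\; \{\sigma \in \ms V_\moins : |A(\sigma)| < n_0(n_0+1)\} \;\cup\; \mf R \;\cup\; \mf R^a.
\end{equation*}
Since $f(\moins) = 1$ and $f \equiv 0$ on $\mf B^+ \setminus \mf R^a$, and since $\mu_{\ms V}(\sigma)R(\sigma,\sigma') = \mu_{\ms V}(\sigma) \wedge \mu_{\ms V}(\sigma') \le \mu_{\ms V}(\sigma')$, the principle yields
\begin{equation*}
\capa_{\ms V}(\moins, \mf B^+ \setminus \mf R^a) \;\le\;
\sum_{\sigma' \in \mf B^+ \setminus \mf R^a} N(\sigma')\, \mu_{\ms V}(\sigma'),
\end{equation*}
where $N(\sigma') := \#\{\sigma \in F : \sigma \sim \sigma'\}$ counts the neighbors of $\sigma'$ lying in $F$.

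The argument then reduces to two combinatorial claims. For the first, I would check that $N(\sigma') \le C_0$ uniformly (with $C_0$ depending only on $h$, not on $|\Lambda_L|$). Neighbors $\sigma \in F$ of $\sigma' \in \mf B^+ \setminus \mf R^a$ come essentially only from flipping one of the $\le n_0(n_0+1)+1$ non-$(-1)$ sites of $\sigma'$, which accounts for at most $2(n_0(n_0+1)+1)$ candidates; flipping a $-1$-site instead increases $|A|$ by one and hence either leaves $\ms V_\moins$ or lands in the highly restricted set $\mf R^+$, and a direct inspection of the decomposition \eqref{eq09} shows that only $O(1)$ such flips can yield a configuration in $\mf R$ or $\mf R^a$.

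For the second claim I would establish the weighted configuration counts
\begin{equation*}
\sum_{\sigma' \in \mf B \setminus \mf R} e^{-\beta \bb H(\sigma')} \;\le\; C_0\, |\Lambda_L|\, e^{-\beta(\Gamma_c + h)}, \qquad
\sum_{\sigma' \in \mf R^+ \setminus \mf R^a} e^{-\beta \bb H(\sigma')} \;\le\; C_0\, |\Lambda_L|^2\, e^{-\beta(\Gamma_c + 2)}.
\end{equation*}
The energy bounds \eqref{eq07}--\eqref{eq08} force the respective minima to $\Gamma_c+h$ and $\Gamma_c+2$. The minimum-energy configurations in $\mf B \setminus \mf R$ are rectangles of $0$-spins with one corner displaced to a side, of which there are $O(|\Lambda_L|)$ after one chooses the rectangle position, orientation, corner, and attachment point; the minimum-energy configurations in $\mf R^+ \setminus \mf R^a$ are rectangles of $0$-spins with one detached $0$-spin, numbering $O(|\Lambda_L|^2)$ since one selects both the rectangle and the detached-spin positions independently. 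Configurations of strictly higher energy are controlled by geometric summation in $e^{-\beta}$: each additional unit of energy multiplies the combinatorial entropy by a factor independent of $|\Lambda_L|$ and the weight by $e^{-\beta}$.

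Combining the two claims gives
\begin{equation*}
\capa_{\ms V}(\moins, \mf B^+ \setminus \mf R^a) \;\le\; \frac{C_0}{Z_{\ms V}} \, |\Lambda_L|\, e^{-\beta \Gamma_c} \, \bigl\{\, e^{-\beta h} \,+\, |\Lambda_L|\, e^{-2\beta}\, \bigr\},
\end{equation*}
which is the desired inequality. The main obstacle will be the isoperimetric enumeration in the second claim: one must classify $\mf B \setminus \mf R$ configurations by the structure of their ``defect'' relative to the critical rectangle (one or several displaced corners, bumps on the sides, or $+1$-spins) and verify that summing the geometric series in $e^{-\beta}$ still preserves the $O(|\Lambda_L|)$, respectively $O(|\Lambda_L|^2)$, prefactor. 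This is essentially an extension of the fixed-volume isoperimetric analysis of \cite{ll} (specifically Assertion 4.D invoked in \eqref{eq07}--\eqref{eq08}) to the growing-torus regime permitted by assumption \eqref{8-3}.
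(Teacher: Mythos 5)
Your skeleton coincides with the paper's: bound the capacity by the Dirichlet form of an indicator test function (the paper uses $\chi_B$ with $B=\mf B^+\setminus\mf R^a$; your $\mb 1_F=1-\chi_B$ yields the identical Dirichlet form), note that each configuration of $B$ has only $O(1)$ neighbors inside $\ms V_\moins$ that matter, split $B$ into $\mf B\setminus\mf R$ and $\mf R^+\setminus\mf R^a$, and establish precisely the two weighted counts you display. Your identification of the minimizers (a rectangle with a displaced corner for $\mf B\setminus\mf R$, energy $\Gamma_c+h$; a rectangle with a detached $0$-spin for $\mf R^+\setminus\mf R^a$, energy $\Gamma_c+2$) and of the prefactors $|\Lambda_L|$ and $|\Lambda_L|^2$ is also exactly the paper's.

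There is, however, one sentence that is false as written and that sits at the crux of the second claim: ``each additional unit of energy multiplies the combinatorial entropy by a factor independent of $|\Lambda_L|$.'' It does not. A configuration of $\mf B\setminus\mf R$ whose $n_0(n_0+1)$ non-$(-1)$ spins split into $k$ connected components has energy only bounded below by $\Gamma_c+h+2(k-1)$ (this is the paper's Assertion \ref{compo}), while the number of such configurations grows like $|\Lambda_L|^{k}$: each extra component costs $2$ units of energy but buys a full factor $|\Lambda_L|$ of entropy. The series you must sum is therefore $\sum_k(|\Lambda_L|\,e^{-2\beta})^{k-1}$, not a geometric series in $e^{-\beta}$ with volume-free ratio, and it converges only because of hypothesis \eqref{8-3} --- which is exactly why that hypothesis appears in the lemma. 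Your closing paragraph gestures at this, but the mechanism you actually state would never detect the need for \eqref{8-3}, so the classification of defects must be organized by number of connected components, not by energy alone. A second point left implicit: discarding the $+1$-spins in the enumeration is not free; it rests on Assertion \ref{fasB}, i.e.\ $\bb H(\sigma^o)\le\bb H(\sigma)$, which holds only because $N(\sigma)\le n_0(n_0+1)+1$ (it is false in general, $\plus$ being the ground state), and costs a harmless factor $2^{n_0(n_0+1)}$.
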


The proof of this lemma is divided in several steps. Let $B := \mf B^+
\setminus \mf R^a$ and let $\chi_{B} : \Omega_L \to \bb R$ be the
indicator of the set $B$. Since $\chi_B({\bf -1})=0$ and $\chi_B
(\sigma)=1$ for $\sigma\in B$, by the Dirichlet principle,
\begin{equation}
\label{ff2}
\capa_{\ms V}(\mf B^+ \setminus \mf R^a, {\bf -1}) \;\le\; D_{\ms V}(\chi_{B})\;,
\end{equation}
where $D_{\ms V}(f)$ stands for the Dirichlet form of $f$ for the
reflected process $\eta_t$.  An elementary computation yields that
\begin{equation}
\label{ff3}
D_{\ms V}(\chi_B) \;=\; \sum_{\sigma \in B}
\sum_{\sigma' \in \ms V_\moins \setminus B} \mu_{\ms V}(\sigma) 
\land \mu_{\ms V}(\sigma')\;,
\end{equation}
where the second sum is carried over all configurations $\sigma'$
which belong to $\ms V_\moins \setminus B$ and which differ from
$\sigma$ at exactly one spin. We denote this relation by $\sigma' \sim
\sigma$.

Let
\begin{equation}
\label{B+}
B_1 \;:=\; \mf B \,\setminus\, \mf R\;, \quad
B_2 \;:=\; \mf R^+ \,\setminus\, \mf R^a\;,
\end{equation}
so that $B = B_1 \cup B_2$, and consider separately the sums over
$B_1$ and $B_2$. We start with $B_2$.

\begin{asser}
\label{fasA}
We have that
\begin{equation*}
\sum_{\sigma \in B_2}
\sum_{\sigma'\sim\sigma} \mu_{\ms V}(\sigma) \land \mu_{\ms V}(\sigma') 
\;\le \;  C_0\, \frac{1}{Z_{\ms V}} \, |\Lambda_L|\, e^{-\b \Gamma_c} \, \Big\{
|\Lambda_L|\, e^{- 2 \b} \,+\, e^{-[10-h]\beta} \Big\} \;.
\end{equation*}
\end{asser}

\begin{proof}
A configuration $\eta \in B_2$ has a $n_0 \times (n_0 +1)$ rectangle
of 0-spins, and an extra-spin. If this extra-spin is attached to the
rectangle it is equal to $+1$, while it may be $0$ or $+1$ if it is
not. We study the two cases separately.
	
Fix a configuration $\eta \in B_2$ where the extra-spin is attached to
the rectangle, so that $\HH(\eta) = \Gamma_c + (10 - h)$. Consider a
configuration $\s' \in \ms V_\moins \setminus B$ such that $\s' \sim
\eta$. As $\s'$ may have at most $n_0(n_0+1) +1$ spins different from
$-1$, this excludes the possibility that $\s'$ is obtained from $\eta$
by flipping a $-1$. By \eqref{m05}, configurations in $\mf B$ with
$n_0(n_0+1) +1$ spins different from $-1$ may not have two spins equal
to $+1$. This excludes flipping a $0$ to $+1$. Finally, we may not
flip the $+1$ to $0$ because by doing so we obtain a configuration in
$\mf R^a$, and thus not in $B = \mf B^+\setminus \mf R^a$.

Hence, either $\s'$ is obtained from $\eta$ by flipping the $+1$ to
$-1$, or it is obtained by flipping a $0$ to $-1$. In the first case
$\bb H(\sigma') < \bb H(\eta)$, while in the second case if the
$0$-spin belongs to the corner, $\bb H(\sigma') > \bb H(\eta)$. Since
the number of configurations obtained by these flips is bounded by a
finite constant, the contribution to the sum appearing in the
statement of the assertion is bounded by
\begin{equation*}
C_0\, \sum_{\eta} \mu_{\ms V}(\eta) \;\le \;  C_0\, \frac{1}{Z_{\ms V}}\, 
e^{-\beta\, \Gamma_c} \, |\Lambda_L| \, e^{-[10-h]\beta}\;,
\end{equation*}
where the factor $|\Lambda_L|$ comes from the number of possible
positions of the rectangle, while the constant $C_0$ absorbs the
number of positions of the positive spin.

Fix now a configuration $\eta \in B_2$ where the extra-spin is not
attached to the rectangle. Then $\HH(\eta) \geq \Gamma_c +
2$. Consider a configuration $\s' \in \ms V_\moins \setminus B$ such
that $\s' \sim \eta$. As before, since $\s'$ may have at most
$n_0(n_0+1) +1$ spins different from $-1$, this excludes the
possibility that $\s'$ is obtained from $\eta$ by flipping a $-1$.  By
excluding this possibility, we are left with a finite number
[depending on $n_0$] of possible jumps. Hence, the contribution of
configurations of this type to the sum appearing in the statement of
the assertion of is bounded by
\begin{equation*}
C_0\, \sum_{\eta} \mu_{\ms V}(\eta) \;\le \;
C_0\, \frac{1}{Z_{\ms V}} \, e^{-\b \Gamma_c} \, |\Lambda_L|^2\,
e^{- 2 \b} \;,
\end{equation*}
where the factor $|\Lambda_L|^2$ appeared to take into account the
possible positions of the rectangle and of the extra particle.
This proves the assertion.
\end{proof}

It remains to examine the sum over $B_1$.  Denote by $N(\sigma)$ the
number of spins of the configuration $\sigma$ which are different from
$-1$:
\begin{equation}
\label{ff1}
N(\sigma) \;=\; \# A(\sigma)
\;=\; \# \{x\in \Lambda_L : \sigma_x \,\not =\, -1\}\;.
\end{equation}
Next assertion states that we can restrict our attention to
configurations $\s$ which have no spin equal to $+1$. For a
configuration $\s$ such that $N(\s) \leq n_0 (n_0 +1) +1,$ let $\s^o$
be the one obtained from $\s$ by replacing all spins equal to +1 by
0-spins : $\s^o_x = \s_x \land 0$.

\begin{asser}
\label{fasB}
For all $\sigma\in \Omega_L$ such that $N(\s) \leq n_0 (n_0 +1) +1$, 
\begin{equation*}
\bb H (\sigma^o) \;\le\; \bb H (\sigma) \;.
\end{equation*}
\end{asser}

\begin{proof}
This result is clearly not true in general because $\bf +1$ is the
ground state. It holds because we are limiting the number of spins
different from $-1$.

For a configuration $\sigma\in\Omega_L$, denote by $I_{a,b}(\sigma)$,
$-1\le a < b \le 1$, the number of unordered pairs $\{x,y\}$ of
$\Omega_L$ such that $\Vert x-y\Vert = 1$, $\{\sigma_x, \sigma_y\} =
\{a,b\}$, where $\Vert z\Vert$ stands for the Euclidean norm of
$z\in\bb R^2$.
	
An elementary computation yields that
\begin{equation*}
\bb H (\sigma) \;-\; \bb H (\sigma^o) \;=\;
I_{0,1}(\sigma) \;+\; 3\, I_{-1,1}(\sigma) \;-\; h N_1(\sigma)\;, 
\end{equation*}
where $N_1(\sigma)$ stands for the total number of spins equal to $+1$
in the configuration $\sigma$. To prove the assertion, it is therefore
enough to show that $h N_1(\sigma) \le I_{0,1}(\sigma) +
I_{-1,1}(\sigma)$.
	
By \cite[Assertion 4.A]{ll}, $I_{0,1}(\sigma) + I_{-1,1}(\sigma) \ge 4
\sqrt{N_1(\sigma)}$. It remains to obtain that $h N_1(\sigma) \le 4
\sqrt{N_1(\sigma)}$, i.e., that $h \sqrt{N_1(\sigma)} \le 4 $. Indeed,
since $N(\s) \leq n_0(n_0+1) +1$, $N_1(\sigma) \le n_0(n_0+1)+1$ so
that, by definition of $n_0$, $h \sqrt{N_1(\sigma)} \le h (n_0+1) \le
2+h \le 3$.
\end{proof}

Recall that $A(\sigma) = \{x\in\Lambda_L : \sigma_x \not = -1\}$. A
set $A\subset A(\sigma)$ is said to be a connected component of
$A(\sigma)$ if (a) for any $x$, $y\in A$, there exists a path $(x_0=x,
x_1, \dots, x_m=y)$ such that $x_i\in A$, $\Vert x_{i+1}-x_i\Vert =
1$, $0\le i <m$ and (b) for any $x\in A$, $y\not \in A$, such a path
does not exists.

Next assertion gives an estimation of the energy of a configuration
$\s \in \Omega_L$ such that $N(\s) \leq n_0 (n_0 +1) +1$ in terms of
the number of connected components.

\begin{asser}
\label{compo}
Let $\s \in \Omega_L$ be a configuration such that $N(\s) = n_0 (n_0
+1) $, and denote by $k$, $1 \leq k \leq n_0 (n_0 +1)$ the number of
connected components of $\s$. Then,
\begin{equation*}
\HH(\s) \;\geq\; \Gamma_c \;+\; 2\, (k-1)  \;+\; h \;.
\end{equation*}
\end{asser}

\begin{proof}
By Assertion \ref{fasB}, we can assume that $\s$ has no spin equal to
$+1$. For such a configuration and by definition of $\Gamma_c$,
\begin{equation*}
\HH(\s) \;=\; \big[\, I_{-1,0} (\s) \,-\, (4n_0 +4) \, \big] 
\;+\; \Gamma_c \;+\; h \;.
\end{equation*}
To complete the proof of the assertion, we have to show that $I_{-1,0}
(\s) \geq (4n_0 +4) + 2(k-1)$.
	
By moving 2 of the connected components of $\s$, and gluing them
together, we reach a new configuration $\s^1$ such that $N(\s^1) =
N(\s)$, while the size of the interface has decreased at least by 2:
\begin{equation*}
I_{-1,0} (\s) \;\geq\; I_{-1,0} (\s^1) \;+\; 2\; .
\end{equation*}
Iterating this argument $k-1$ times, we finally reach a configuration
$\s^*$ with only one connected component and such that
\begin{equation}
\label{8-1}
I_{-1,0} (\s) \;\geq\; I_{-1,0} (\s^*) \;+\; 2 (k-1)\; .
\end{equation}
The last connected component is glued to the set formed by the
previous ones in such a way that the set $A(\s^*)$ is not a $n_0
\times (n_0 +1)$ rectangle. This is always possible.

Since the connected set $A(\s^*)$ is not a $n_0 \times (n_0 +1)$
rectangle, by \cite[Assertion 4.B]{bl2},
$I_{-1,0} (\s^*) \geq 4 n_0 +4$, so that
\begin{equation*}
I_{-1,0} (\s) \geq (4n_0 +4) + 2(k-1),
\end{equation*}
which proves the assertion.
\end{proof}

We estimate the sum over $\s \in B_1$ on the right-hand side of
\eqref{ff3} in the next assertion.

\begin{asser}
\label{fasC}
There exists a finite constant $C_0$ such that
\begin{equation*}
\sum_{\s \in B_1} \sum_{\s' \sim \s} \mu_{\ms V}(\s) \land \mu_{\ms
  V}(\s') \;\le\;  C_0 \, \frac{1}{Z_{\ms V}} \, |\Lambda_L| \, e^{-\b
  \Gamma_c} \, e^{- h \b} 
\end{equation*}
for all $\beta \ge C_0$.
\end{asser}

\begin{proof}	
The proof of this assertion is divided in three steps. The first one
consists in applying Assertion \ref{fasB} to restrict the first sum to
configurations with no spin equal to $+1$. Indeed, as configuration in
$B_1$ have at most $n_0 (n_0+1)$ spins different from $-1$, by this
assertion, 
\begin{equation*}
\sum_{\s \in B_1} \sum_{\s' \sim \s} \mu_{\ms V}(\s) \land \mu_{\ms
  V}(\s') \;\le\; 2^{n_0(n_0+1)}
\sum_{\s \in B_{1,0}} \sum_{\s' \sim \s} \mu_{\ms V}(\s) \land \mu_{\ms
  V}(\s') \;, 
\end{equation*}
where $B_{1,0}$ represents the set of configurations in
$\{-1,0\}^{\Lambda}$ which belong to $B_1$. 

The second step consists in characterizing all configurations
$\sigma'$ which may appear in the second sum.  Recall that it is
performed over configurations $\sigma'\in \ms V_\moins \setminus B$
which can be obtained from $\sigma$ by one flip. In particular,
$N(\sigma')$, introduced in \eqref{ff1}, can differ from $N(\sigma) =
n_0(n_0+1)$ by at most by $1$.

If $N(\sigma')=N(\sigma)+1$, as $\sigma'\not\ni B \supset \mf R^+$, we
have that $\sigma'\in \mf R^a$. Since $\sigma$ does not belong to $\mf
R$, the $0$-spins of the configuration $\sigma$ form a $n_0\times (n_0
+1)$-rectangle in which one site has been removed and one site at the
boundary of the rectangle has been added. In this case $\bb H(\sigma)
> \bb H(\sigma')$ and $\bb H(\sigma) \ge \Gamma_c + h$. The last bound
is attained if the site removed from the rectangle to form $\sigma$ is
a corner. Hence, restricting the second sum to configurations
$\sigma'$ such that $N(\sigma')=N(\sigma)+1$ yields that
\begin{equation*}
\sum_{\s \in B_{1,0}} \sum_{\s' \sim \s} \mu_{\ms V}(\s) \land \mu_{\ms
  V}(\s') \;\le\;  C_0\, \frac{1}{Z_{\ms V}} \,
|\Lambda_L|  \, e^{-\b \Gamma_c} \, e^{- h \b} \;,
\end{equation*}
where the factor $|\Lambda_L|$ takes into account the possible
positions of the rectangle and the constant $C_0$ the positions of the
erased and added sites.

If $N(\sigma')=N(\sigma)$, resp. $N(\sigma')=N(\sigma)-1$, as $\sigma$
has no spin equal to $+1$, this means that one $0$-spin has been
flipped to $+1$, resp. to $-1$. In both cases, there are $n_0 (n_0
+1)$ such configurations $\sigma'$. Hence, the sum restricted to
such configurations $\sigma'$ is less than or equal to
\begin{equation*}
C_0 \sum_{\s \in B_{1,0}} \mu_{\ms V}(\s) \;.
\end{equation*}

Let $N := n_0 (n_0+1)$, and denote by $\ms C_k$, $1\le k \le N$, the
set of configurations in $B_{1,0}$ which have $k$ connected
components. Rewrite the previous sum according to the number of
components and apply Assertion \ref{compo} to obtain that it is
bounded by
\begin{equation*}
\sum_{k=1}^N \sum_{\s \in \ms C_k} \mu_{\ms V}(\s)  \;\le\;
C_0 \frac{1}{Z_{\ms V}} \, |\Lambda_L| \, e^{-\b \Gamma_c} \, e^{-h \b} 
\sum_{k=1}^N |\Lambda_L|^{k-1} \, e^{-\b[2(k-1)]}\;, 
\end{equation*}
where $|\Lambda_L|^k$ takes into account the number of positions of
the $k$ components, and $C_0$ the form of each component. By the
assumption of the theorems, $|\Lambda_L| \, e^{-2\b}$ is bounded by
$1/2$ for $\beta$ large enough, so that the sum is bounded by $2$. To
complete the proof of the assertion it remains to recollect the
previous estimates.
\end{proof}

\begin{proof}[Proof of Lemma \ref{fl1}]
This Lemma is a consequence of Assertions \ref{fasA}, \ref{fasC}, and
from the fact that $h<5$.
\end{proof}

\section{The growth of a supercritical droplet}
\label{proofs1} 

In the previous section we have seen that starting from $\moins$ we
hit the boundary of the valley $\ms V_\moins$ at $\mf R^a$. In this
section we show that starting from $\mf R^a$ the process either
returns to $\moins$, if the extra $0$-spin is attached to one of the
shortest sides of the rectangle, or it invades the all space with
positive probability, if the extra $0$-spin is attached to one of the
longest sides of the rectangle.

Denote by $\mf R^{l}$, $\mf R^{s}$ the configurations of $\mf R^a$ in
which the extra particle is attached to one of the longest, shortest
sides of the rectangle, respectively, and by $\mf R^{c}$ the
configurations of $\mf R^a$ in which the extra particle is attached to
one corner of the rectangle. Let $\mf R^{i} = \mf R^a \setminus \mf
R^c$, $\mf R^{lc} = \mf R^{l} \cap \ms R^{c}$, $\mf R^{li}= \mf R^{l}
\cap \mf R^{i}$.

Recall that $\ms M = \{\moins, \zero, \plus\}$, and let
\begin{equation}
\label{m3}
\delta (\beta) \:=\; 
|\Lambda_L|^{1/2}\, e^{-[(n_0+1)h-2]\beta} \;+\;  |\Lambda_L|^{1/2}\,  e^{-h\beta}
\;+\; |\Lambda_L|^{2} \, e^{-(2-h) \beta} \;.
\end{equation}

\begin{proposition}
\label{ml7}
There exists a finite constant $C_0$ such that for all $\sigma\in \mf
R^{lc}$, $\sigma'\in \mf R^{li}$, and $\sigma''\in \mf R^{s}$,
\begin{gather*}
\big| \, \bb P_{\sigma} [H_{\bf -1} = H_{\ms M}]  \,-\, 1/2\, \big|
\;\le\; C_0\, \delta (\beta)
\quad \text{and}\quad
\big| \, \bb P_{\sigma} [H_{\bf 0} = H_{\ms M}]  \,-\, 1/2\, \big|
\;\le\; C_0\, \delta (\beta)\;,  \\
\big| \, \bb P_{\sigma'} [H_{\bf -1} = H_{\ms M}]  \,-\, 1/3\, \big|
\;\le\; C_0\, \delta (\beta)
\quad \text{and}\quad
\big| \, \bb P_{\sigma'} [H_{\bf 0} = H_{\ms M}]  \,-\, 2/3\, \big|
\;\le\; C_0\, \delta (\beta)\;, \\
\bb P_{\sigma''} [H_{\bf -1} = H_{\ms M}]  \;\ge \; 1 \,-\, C_0\,
\delta (\beta) 
\end{gather*}
for all $\beta\ge C_0$.
\end{proposition}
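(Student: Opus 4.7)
The plan is to derive the claimed probabilities from a first-jump analysis at $\sigma\in\mf R^a$, together with a near-deterministic description of the downstream behavior in each branch. Fix $\sigma\in\mf R^a$ and enumerate the jumps out of $\sigma$. The extra $0$-spin has one $0$-neighbor (in the rectangle) and three $-1$-neighbors, so flipping it back to $-1$ (``erase'') gives $\Delta \mathbb H = h-2<0$ and rate $1$. A $-1$-neighbor of the extra spin that is also adjacent to the rectangle has two $0$- and two $-1$-neighbors, so flipping it to $0$ (``grow'') gives $\Delta\mathbb H = -h$ and again rate $1$. All remaining moves have rate at most $e^{-h\beta}$ (erasing a rectangle corner, $\Delta\mathbb H = h$) or $e^{-(2-h)\beta}$ (``growing upward'' from the extra spin, or nucleating a $0$-spin at one of the $O(|\Lambda_L|)$ faraway sites); under \eqref{8-4} these contribute $O(\delta(\beta))$ to the holding rate. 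Letting $m_\sigma$ be the number of rate-$1$ moves, one finds $m_\sigma=2$ for $\sigma\in\mf R^{lc}$ and for corner positions in $\mf R^s$, and $m_\sigma=3$ for $\sigma\in\mf R^{li}$ and interior positions in $\mf R^s$. Hence the first jump is any given rate-$1$ move with probability $1/m_\sigma + O(\delta(\beta))$.

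For the erase branch, the process lands in some $\zeta\in\mf R$, and I claim $\PP_\zeta[H_\moins = H_{\ms M}] = 1 - O(\delta(\beta))$. From $\zeta$, corner-erasing transitions of rate $e^{-h\beta}$ dismantle the rectangle and drive the process quickly back to $\moins$, whereas re-entering $\mf R^a$ requires crossing a rate-$e^{-(2-h)\beta}$ barrier, strictly smaller since $h<1$. Formally, a capacity comparison modelled on Lemmas \ref{ml4} and \ref{fl1} applied to the reflected process on $\ms V_\moins$ gives $\PP_\zeta[H_{\mf R^a} < H_\moins] \le \capa_{\ms V}(\zeta,\mf R^a)/\capa_{\ms V}(\zeta,\moins) = O(\delta(\beta))$; the strong Markov property then shows that even after $O(1)$ excursions between $\mf R$ and $\mf R^a$ the process reaches $\moins$ with overwhelming probability.

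For a grow branch from $\mf R^{li}$ or $\mf R^{lc}$, the process enters a configuration with a length-$2$ segment on a \emph{long} side. The segment length then evolves, to leading order, as an asymmetric birth--death chain with forward rate $\ge 1$ and backward rate $\le 2e^{-h\beta}$ (erasing an endpoint of a length-$\ge 2$ segment costs $\Delta\mathbb H = h>0$). A gambler's-ruin computation, or Lemma \ref{ml3} after a time change, yields that the segment reaches length $n_0+1$ (a completed row, producing an $(n_0+1)\times(n_0+1)$ supercritical rectangle, of energy $\mathbb H(\mf R)+2-(n_0+1)h<\mathbb H(\mf R)$) before returning to length $1$ with probability $1 - O(e^{-h\beta})$. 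Iterating the same argument row by row, each iteration succeeding with probability $1-o(1)$, the droplet eventually fills the torus and the process enters $\zero$; since the droplet consists only of $0$-spins throughout, the exit from $\ms M$ occurs at $\zero$, not $\plus$, explaining the absence of $\plus$ from the leading-order probabilities. For a grow branch from $\mf R^s$, the same analysis drives the segment to length $n_0$ and produces an $n_0\times(n_0+2)$ rectangle; however, since $n_0h<2$, this new rectangle has energy $\mathbb H(\mf R)+2-n_0h$ strictly above $\mathbb H(\mf R)$ and is \emph{subcritical}. A further capacity comparison of the same flavor shows that the process retreats from this rectangle back to $\mf R$ and thence to $\moins$ with probability $1-o(1)$, so all first jumps from $\mf R^s$ eventually return to $\moins$.

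Assembling these pieces gives $\PP_\sigma[H_\moins = H_{\ms M}] = 1/m_\sigma + O(\delta(\beta))$ for $\sigma\in\mf R^{lc}\cup\mf R^{li}$, with the complementary mass going to $\zero$, producing the stated values $1/2$ and $1/3$; for $\sigma\in\mf R^s$ one gets $\PP_\sigma[H_\moins = H_{\ms M}] = 1 - O(\delta(\beta))$. The principal obstacle is uniformly controlling the long growth phase: one must show that across the $\sim L$ successive row completions, the cumulative probability of ``bad'' events (a faraway $0$-spin appearing, or an intermediate rectangle shrinking past its initial size) stays $O(\delta(\beta))$. This is precisely where the sharp hypothesis \eqref{8-4}, with its $|\Lambda_L|^{1/2}$ and $|\Lambda_L|^2$ factors, is used quantitatively, and it presumably accounts for most of the technical work in Section \ref{proofs1}.
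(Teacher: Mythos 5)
Your proposal follows essentially the same route as the paper: a first-jump analysis identifying the $m_\sigma\in\{2,3\}$ rate-one moves (this is exactly Assertions \ref{mas1} and \ref{mas2}), the erase branch returning to $\moins$ because the subcritical rectangle dismantles (Lemma \ref{ml8} and Corollary \ref{ml9}), the grow branch on a long side filling the torus and hitting $\zero$ (Lemmata \ref{mas3}, \ref{ml1}, \ref{ml2}, Proposition \ref{ml11} and Corollary \ref{ml10}), and the $\mf R^s$ case producing a still-subcritical $n_0\times(n_0+2)$ rectangle that retreats. The one piece you leave as a black box --- the uniform control of the $\sim L$ successive row completions, each contributing an error with $|\Lambda_L|$ factors that must be summed --- is precisely what the cited lemmata of Section \ref{proofs1} supply, and you correctly identify both the mechanism and the role of hypothesis \eqref{8-4} there.
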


The proof of this proposition is divided in several lemmata.  The
first result describes what happens when there is a $0$-spin attached
to the side of a rectangle of $0$-spins in a sea of $-1$-spins.  From
such a configuration, either the attached $0$-spin is flipped to $-1$
or an extra $0$-spin is created at the neighborood of the attached
$0$-spin.

For $n\ge 1$, let 
\begin{equation}
\label{m01}
\kappa_n (\beta) \;:=\; e^{-h \b} \;+\; n\, e^{-(2-h)\b} \;+\; 
|\Lambda_L|\, e^{-(4-h)\b}\;.
\end{equation}
Since $n^2\le |\Lambda_L|$,
$\kappa_n (\beta)\le \delta_1 (\beta)$, where
\begin{equation*}
\delta_1 (\beta) \;:=\; e^{-h \b} \;+\; |\Lambda_L|^{1/2} \, e^{-(2-h)\b}  \;+\; 
|\Lambda_L|\, e^{- (4-h)\b}\;.
\end{equation*}
Note that for $\beta$ large enough, $|\Lambda_L|\, e^{- (4-h)\b} \le
|\Lambda_L|^{1/2} \, e^{-(2-h)\b}$. 

\begin{asser}
\label{mas1}
Fix $n_0\le m \le n \le L-3$. Consider a configuration $\sigma$ with
$nm +1$ $0$-spins, all the other ones being $-1$. The $0$-spins form a
$(n\times m)$-rectangle and the extra $0$-spin has one neighbor
$0$-spin which sits at one corner of the rectangle {\rm [}there is
only one $-1$-spin with two $0$-spins as neighbors {\rm ]}. Let $\s_-$,
resp. $\s_+$, be the configuration obtained from $\s$ by flipping to
$-1$ the attached $0$-spin, resp. by flipping to $0$ the unique $-1$
spin with two $0$-spins as neighbors. Then, there exists a constant
$C_0$ such that
\begin{equation*}
\Big|\, p_\b (\s,\s_-) \,-\, \frac 12 \, \Big| 
\;\le\; C_0\, \delta_1 (\beta) \;, \quad 
\Big|\, p_\b (\s,\s_+) \,-\, \frac 12 \, \Big| 
\;\le\; C_0\, \delta_1 (\beta) \;.
\end{equation*}
\end{asser}

\begin{proof}
We prove the lemma for $\s_-$, the proof for $\s_+$ being
identical. Clearly, $R_\b (\s,\s_-) = R_\b (\s,\s_+) = 1$, so that 
\begin{equation*}
p_\b (\s,\s_-) \;=\; \frac{R_\b (\s,\s_-)}{\l_\b (\s)} \;=\;
\frac{1}{2 + \sum_{\s' \neq \s_-,\s_+} R_\b (\s,\s')}\;\cdot
\end{equation*}
Consider the last sum. There are three terms, corresponding to the
corners of the rectangle, for which $R_\b (\s,\s') \le e^{-\b h}$.
There are $4(n+m)-2$ terms, corresponding to the inner and outer
boundaries of the rectangle, such that $R_\b (\s,\s') \le e^{-\b
  (2-h)}$. All the remaining rates are bounded by $e^{-\b
  (4-h)}$. Hence,
\begin{equation*}
\sum_{\s' \neq \s_-,\s_+} R_\b (\s,\s') \;\le\;
C_0 \; \kappa_n(\beta) \;,
\end{equation*}
where $\kappa_n(\beta)$ has been introduced in \eqref{m01}.  This
proves the assertion.
\end{proof}

In the next assertion we consider the case in which the extra $0$-spin
does not sit at the corner of the rectangle, but in its interior. The
proof of this result, as well as the one of the next assertion, is
similar to the previous proof.

\begin{asser}
\label{mas2}
Under the same hypotheses of the previous assertion, assume now that
the extra $0$-spin has one neighbor $0$-spin which does not sit at one
corner of the rectangle {\rm [}there are exactly two $-1$-spins with
two $0$-spins as neighbors {\rm ]}. Let $\s_-$, resp. $\s^+_+$,
$\s^-_+$, be the configuration obtained from $\s$ by flipping to $-1$
the attached $0$-spin, resp. by flipping to $0$ one of the two
$-1$-spins with two $0$-spins as neighbors. Then, there exists a
constant $C_0$ such that
\begin{equation*}
\Big|\, p_\b (\s,\s_-) \,-\, \frac 13 \, \Big| 
\;\le\; C_0\, \delta_1 (\beta) \;, \quad 
\Big|\, p_\b (\s,\s^\pm_+) \,-\, \frac 13 \, \Big| 
\;\le\; C_0\, \delta_1 (\beta) \;.
\end{equation*}
\end{asser}

The next lemma states that once there are two adjacent $0$-spins
attached to one of the sides of the rectangle, this additional
rectangle increases with very high probability. This result will
permit to enlarge a $(p\times 1)$-rectangle to a $(2n_0 \times
1)$-rectangle. To enlarge it further we will apply Lemma \ref{ml2}
below.

This result will be used in three different situations:
\begin{itemize}
\item[(A1)] To increase in any direction a rectangle with $2$ adjacent
  $0$-spins whose distance from the corners is larger than $2n_0$ to a
  rectangle with $2n_0$ adjacent $0$-spins;
\item[(A2)] To increase in the direction of the corner a rectangle
  with $k\ge 2$ adjacent $0$-spins which is at distance $n_0$ or less
  than from one of the corners to a rectangle with adjacent $0$-spins
  which goes up to the corner; 
\item[(A3)] To increase a rectangle with $k<2n_0$ adjacent $0$-spins
  which includes one of the corners to a rectangle with $2n_0$
  adjacent $0$-spins.
\end{itemize}

Fix $n_0\le m \le n \le L-3$. Denote by $\sigma$ a configuration in
which $nm$ $0$-spins form a $(m\times n)$-rectangle in a sea of
$-1$-spins. Recall that we denote this rectangle by $A(\sigma)$, and
assume, without loss of generality, that $m$ is the length and $n$ the
height of $A(\sigma)$. Let $(x,y)$ be the position of the upper-left
corner of $A(\sigma)$.

We attach to one of the sides of $A(\sigma)$ and extra $(p\times
1)$-rectangle of $0$-spins, where $p \ge 2$. To fix ideas, suppose
that the extra $0$-spins are attached to the upper side of length $m$
of the rectangle. 

More precisely, denote by $\eta^{(c,d)}$, $0\le c<d \le m$, $d-c\ge
2$, the configuration obtained from $\sigma$ by flipping from $-1$ to
$0$ the $([d-c]\times 1)$-rectangle, denoted by $R_{c,d}$, given by
$\{(x+c,y+1), \dots, (x+d,y+1)\}$. The next lemma asserts that before
anything else happens the rectangle $R_{c,d}$ increases at least by
$n_0$ units at each side.

For a pair $(c,d)$ as above, denote by $\ms S_{c,d}$ the set of
configurations given by
\begin{equation*}
\ms S_{c,d} \;=\; \{\eta^{(a,b)} : 0\le a \le c  \text{ and } d\le b\le
m\}\;,
\end{equation*}
and by $E_{c,d}$ the exit time from $\ms S_{c,d}$,
\begin{equation*}
E_{c,d} \;=\; \inf \big\{ t>0 : \sigma_t \not\in \ms S_{c,d} \big\}\;.
\end{equation*}
Let $c^* = \max \{0, c-n_0\}$, $d^* = \min \{m, d+n_0\}$.  Denote
by $H_{c,d}$ the hitting time of the set $\ms S_{c^*,d^*}$:
\begin{equation*}
H_{c,d} \;=\; \inf \big\{ t>0 : \sigma_t \in \ms S_{c^*,d^*} \big\}\;.
\end{equation*}

\begin{lemma}
\label{mas3}
There exists a constant $C_0$ such that
\begin{equation*}
\bb P_{\eta^{(c,d)}} \big[ E_{c,d} < H_{c,d} \big]\;\le \; 
C_0\, \delta_1 (\beta) \;. 
\end{equation*}
\end{lemma}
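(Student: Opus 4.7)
The plan is to couple the embedded chain of $\sigma_t$ started from $\eta^{(c,d)}$ with a pure-growth random walk on pairs $(a,b)$, and to verify that at each step the coupling fails with probability at most $C_0\,\delta_1(\beta)$, while the ideal chain reaches $\ms S_{c^*,d^*}$ in $O(n_0)$ steps. A Wald-type estimate then yields the claimed bound.

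First I would enumerate the rates out of a state $\eta^{(a,b)}\in\ms S_{c,d}\setminus\ms S_{c^*,d^*}$. The two growth moves $\eta^{(a,b)}\to\eta^{(a-1,b)}$ and $\eta^{(a,b)}\to\eta^{(a,b+1)}$ flip a $-1$-spin with exactly two $0$-neighbours (one in the attached row, one in the big rectangle) and two $-1$-neighbours, so the energy change is $-h$ and the rate is $1$. All other single-spin transitions from $\eta^{(a,b)}$ have rates whose sum is controlled, by the same enumeration as in Assertions~\ref{mas1}--\ref{mas2}, by $C_0\kappa_n(\beta)\le C_0\delta_1(\beta)$: the $e^{-\beta h}$ term accounts for end-of-row shrinks and for flipping one of the four $0$-spins at the corners of the big rectangle, the $n\,e^{-\beta(2-h)}$ term for the $O(n+m)=O(n)$ sites adjacent to the boundary of the big rectangle or of the attached row, and the $|\Lambda_L|\,e^{-\beta(4-h)}$ term for the interior $0\to+1$ flips inside the rectangle and for the far-away $-1\to 0$ flips. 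Consequently the embedded chain's transition probabilities satisfy $p_\beta(\eta^{(a,b)},\eta^{(a-1,b)})=\tfrac12+O(\delta_1(\beta))$ and $p_\beta(\eta^{(a,b)},\eta^{(a,b+1)})=\tfrac12+O(\delta_1(\beta))$, with the remaining jumps contributing total probability $O(\delta_1(\beta))$.

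I would then couple the embedded chain with an ideal chain $\tilde X_k=(A_k,B_k)$ on $\{0,\dots,c\}\times\{d,\dots,m\}$ that moves $A\mapsto A-1$ or $B\mapsto B+1$ with probability $1/2$ each (or with probability $1$ in the single available direction when the grid boundary is reached). The maximal coupling ensures the actual and ideal chains agree at each step except on an independent "failure" event of probability at most $C_0\,\delta_1(\beta)$. Let $T_g:=\inf\{k\ge 0:\tilde X_k\in\ms S_{c^*,d^*}\}$. With $n_L:=c-c^*\le n_0$ and $n_R:=d^*-d\le n_0$, one has $T_g\le \tau_L+\tau_R$, where $\tau_L$ and $\tau_R$ are the $n_L$-th and $n_R$-th success times in a sequence of fair Bernoulli trials, so $\E[T_g]\le 2(n_L+n_R)\le 4n_0$. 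On the event $\{E_{c,d}<H_{c,d}\}$ the coupling must have failed at some step $k\le T_g$, as otherwise the actual chain agrees with $\tilde X_{T_g}\in\ms S_{c^*,d^*}\subset\ms S_{c,d}$ after $T_g$ coupled steps. A Wald-type bound then yields
\begin{equation*}
\bb P_{\eta^{(c,d)}}\!\bigl[E_{c,d}<H_{c,d}\bigr]\;\le\; \E\!\Bigl[\sum_{k=1}^{T_g}\mathbf{1}\{\text{failure at step }k\}\Bigr]\;\le\; C_0\,\delta_1(\beta)\,\E[T_g]\;\le\; 4\,n_0\,C_0\,\delta_1(\beta),
\end{equation*}
which is the claim after absorbing the factor $4 n_0$ into $C_0$.

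The technical heart of the argument is the exhaustive rate enumeration in the second paragraph: one must check that only $O(1)$ flips produce the unavoidable $e^{-\beta h}$ rate, that only $O(n)\le |\Lambda_L|^{1/2}$ flips produce the $e^{-\beta(2-h)}$ rate, and that only $O(|\Lambda_L|)$ flips produce the $e^{-\beta(4-h)}$ rate, so that no hidden combinatorial factor upsets the bound $C_0\,\delta_1(\beta)$. Once these rate bounds are in hand, the coupling and Wald estimate are routine.
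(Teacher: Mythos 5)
Your proof is correct and follows essentially the same route as the paper's: enumerate all jump rates out of a state $\eta^{(a,b)}$, observe that the two growth flips occur at rate $1$ while everything else contributes total rate $O(\kappa_n(\beta))\le O(\delta_1(\beta))$, and iterate. Your coupling/Wald step is in fact a more careful rendering of the paper's terse ``iterating this argument $n_0$ times,'' since it properly accounts for the random number of growth moves needed to extend the protuberance by $n_0$ on each side.
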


\begin{proof}
Consider a configuration $\eta^{(a,b)}$ in $\ms S_{c,d}$. To fix ideas
assume that $a>0$, $b<m$. At rate $1$ the $-1$-spins at $(x+a-1,
y+1)$, $(x+b+1, y+1)$ flip to $0$. Consider all other possible spin
flips. There are less than $2\, |\Lambda_L|$ flips whose rates are
bounded by $e^{-[4-h]\beta}$, $4(n+m)$ flips whose rates are bounded
by $e^{-[2-h]\beta}$ and $4$ flips whose rates are bounded by $e^{- h
  \beta}$. Since all these jumps are independent, the probability that
the $-1$-spin at $(x+a-1, y+1)$ flips to $0$ before anything else
happens is bounded below by $ 1 \,-\, C_0\, [\, |\Lambda_L| \,
e^{-[4-h]\beta} + n e^{-[2-h]\beta} + e^{- h \beta} \, ]$. Iterating
this argument $n_0$-times yields the lemma.
\end{proof}

Applying Assertion \ref{mas1} or \ref{mas2} and then Lemma \ref{mas3}
to a configuration $\sigma\in \mf R^a$ yields that either the process
returns to $\mf R$ or an extra row or line of $0$-spins is added to
the rectangle of $0$-spins. The next two lemmata describe how the
process evolves after reaching such a configuration.

Denote by $m\le n$ the length of the rectangle of $0$-spins.  If the
shortest side has length $n_0$ or less, the configuration evolves to a
$(m\times [n-1])$ rectangle of of $0$-spins. If both sides are
supercritical, that is if $m>n_0$, a $-1$-spin next to the rectangle
is flipped to $0$.

Denote by $\cs_L$ the set of stable configurations of $\Omega_L$,
i.e., the ones which are local minima of the energy:
\begin{equation*}  
\label{defstable}
\cs_L \;=\; \big\lbrace \s \in \Omega_L: \bb H(\sigma) < \bb H(\sigma^{x,\pm})
\text{ for all } x\in \Lambda_L \big\rbrace \;. 
\end{equation*}
Let
\begin{equation*}
\delta_2 (\beta) \;=\;  |\Lambda_L|\, e^{-[4 - n_0 h]\beta } \,+\,
e^{- h \beta}\;.
\end{equation*}

Fix $2\le m\le n_0$, $2 \le n \le n_0+1$, $m\le n$. Consider a
configuration $\sigma$ with $nm$ $0$-spins forming a $(n\times
m)$-rectangle, all the other ones being $-1$. If $m=n=2$, let $\ms
S(\sigma) = \{\moins\}$. If this is not the case, let $\ms S(\sigma)$
be the pair (quaternion if $m=n$) of configurations in which a row or
a column of $0$-spins of length $m$ is removed from the rectangle
$A(\sigma)$. 

We define the valley of $\sigma$, denoted by $\ms V_{\sigma}$, as
follows. Let $\ms G_k$, $0\le k \le m$, be the configurations which
can be obtained from $\sigma$ by flipping to $-1$ a total of $k$
$0$-spins surrounded, at the moment they are switched, by two
$-1$-spins. In particular, the elements of $\ms G_1$ are the four
configurations obtained by flipping to $-1$ a corner of $A(\sigma)$.

Let $\ms G = \cup_{0\le k <m} \ms G_k$. Note that we do not include
$\ms G_m$ in this union. Let $\ms B$ be the configurations which do
not belong to $\ms G$, but which can be obtained from a configuration
in $\ms G$ by flipping one spin.  Clearly, $\ms S(\sigma)$ and $\ms
G_m$ are contained in $\ms B$. Finally, let $\ms V(\sigma) = \ms G
\cup \ms B$ be the neighborhood of $\sigma$.

\begin{lemma}
\label{ml8}
Fix $2\le m\le n_0$, $2 \le n \le n_0+1$, $m\le n$. Consider a
configuration $\sigma$ with $nm$ $0$-spins forming a $(n\times
m)$-rectangle, all the other ones being $-1$. Then, there exists a
constant $C_0$ such that
\begin{equation*}
\bb P_\sigma \big[\, H_{\ms B \setminus \ms S(\sigma)} <  H_{\ms
  S(\sigma)}\,\big]  \;\le \; C_0\, \delta_2 (\beta) \;.
\end{equation*}
\end{lemma}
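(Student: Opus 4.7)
The plan is to apply the capacity bound \eqref{probatriangle}, which gives
\[
\bb P_\sigma\big[\, H_{\ms B \setminus \ms S(\sigma)} < H_{\ms S(\sigma)}\,\big]
\;\le\; \frac{\capa(\sigma,\,\ms B \setminus \ms S(\sigma))}{\capa(\sigma,\,\ms S(\sigma))}\,,
\]
and then to estimate the two capacities separately. For the denominator, I apply the Thomson principle with a single unit flow from $\sigma$ to $\ms S(\sigma)$ along a path of length $m$ that successively flips to $-1$ the $m$ spins of one chosen short side of the rectangle $A(\sigma)$. The first $m-1$ flips have rate $e^{-h\b}$ (each such $0$-spin has two $-1$-neighbors at the moment of flipping), while the closing flip has rate $1$ (the last $0$-spin then has three $-1$-neighbors and the energy change is $h-2<0$). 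Summing $1/[\mu_\b(\eta)\wedge \mu_\b(\xi)]$ along this path, the dominant term is $Z_\b\, e^{\b[\HH(\sigma)+(m-1)h]}$, so
\[
\capa(\sigma,\ms S(\sigma))\;\ge\; c_0\,\mu_\b(\sigma)\, e^{-(m-1)h\b}\,.
\]

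For the numerator, I apply the Dirichlet principle with a test function $f$ equal to $1$ on $\ms G \cup \ms S(\sigma)$, inductively extended by $1$ into the valley of each smaller rectangle $\sigma'\in\ms S(\sigma)$, and equal to $0$ on $\ms B\setminus\ms S(\sigma)$ and outside this enlarged region. The Dirichlet form $D(f)$ is a sum of $\mu_\b(x)\wedge \mu_\b(y)$ over the edges on which $f$ jumps, and these split into two dominant families: (i) edges from some $\eta\in \ms G_k$ to a configuration obtained by flipping a $-1$-spin to $0$, either in the interior of $\Lambda_L$ ($|\Lambda_L|$ positions, rate $e^{-(4-h)\b}$, weight $\mu_\b(\sigma)\, e^{-kh\b-(4-h)\b}$) or adjacent to the current rectangle ($O(n_0)$ positions, rate $e^{-(2-h)\b}$); and (ii) edges from $\eta\in \ms G_{m-1}$ to a configuration in $\ms G_m\setminus \ms S(\sigma)$ via a further corner-like flip (rate $e^{-h\b}$, weight $\mu_\b(\sigma)\, e^{-mh\b}$). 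Summing over $k=0,\ldots,m-1$ and using $m\le n_0$, the totals combine to
\[
D(f) \;\le\; C\, \mu_\b(\sigma)\, e^{-(m-1)h\b}\, \big[\,|\Lambda_L|\, e^{-(4-n_0 h)\b}\,+\, e^{-h\b}\,\big]\,.
\]
Taking the ratio yields the claimed bound $C_0\,\delta_2(\b)$.

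The main obstacle is controlling the contribution to $D(f)$ from edges incident to $\ms S(\sigma)$. Since each $\sigma'\in\ms S(\sigma)$ is itself a rectangle with lower energy than the intermediate states of the row-removal path, the naive indicator $f=\mb 1_{\ms G\cup\ms S(\sigma)}$ would produce boundary terms of order $\mu_\b(\sigma')$, which is too large compared with the Thomson lower bound $\mu_\b(\sigma) e^{-(m-1)h\b}$. The remedy is to extend $f$ inductively through the wells of the smaller rectangles, mirroring the construction of the present lemma one level down, so that $f$ only drops across genuine energy barriers; since the rectangles encountered below $\ms S(\sigma)$ have shorter short sides, the same rate estimates close the cascade and yield only contributions of order $\delta_2(\b)$.
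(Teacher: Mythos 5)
Your overall architecture coincides with the paper's: bound the probability by the ratio $\capa(\sigma,\ms B\setminus\ms S(\sigma))/\capa(\sigma,\ms S(\sigma))$ via \eqref{probatriangle}, use a Thomson flow along the row--removal path $\sigma=\eta^{(0)},\dots,\eta^{(m)}\in\ms S(\sigma)$ for the denominator (your bound $\capa(\sigma,\ms S(\sigma))\ge c_0\,\mu_\beta(\sigma)\,e^{-(m-1)h\beta}$ is the one in the text), and a Dirichlet test function for the numerator. You have also correctly isolated the one genuine difficulty, namely the downhill edges leaving $\ms S(\sigma)$. But your remedy --- extending $f\equiv 1$ through the valleys of the smaller rectangles --- cannot give the claimed rate. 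Already at the first level of the cascade the function must drop to $0$ somewhere around the boundary of the valley of $\sigma'\in\ms S(\sigma)$, and the cheapest such edges contribute on the order of $\mu_\beta(\sigma')\,[\,e^{-m'h\beta}+|\Lambda_L|\,e^{-(4-h)\beta}\,]$, where $m'$ is the short side of $A(\sigma')$. Since $\HH(\sigma')=\HH(\sigma)-(2-mh)$, dividing by the Thomson bound yields a contribution of order $e^{-[(m'+1)h-2]\beta}+|\Lambda_L|\,e^{-2\beta}$ to the ratio; as $(m'+1)h-2<h$ and $2\le 4-n_0h$, both terms are exponentially larger than the corresponding terms of $\delta_2(\beta)$, and the second one does not even vanish without hypothesis \eqref{8-3}. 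Matters only get worse deeper in the cascade, which necessarily terminates at $\moins$ (each step removes the shortest row, ending at the $2\times 2$ square with $\ms S=\{\moins\}$), whose valley boundary carries the entropy factor $|\mf R^a|\asymp|\Lambda_L|$ at energy level $\Gamma_c$. So the cascade does not produce $C_0\,\delta_2(\beta)$ --- a rate that is needed precisely in Corollary \ref{ml9}, Proposition \ref{ml7} and Lemma \ref{as61}, and that the lemma asserts with no growth hypothesis on $|\Lambda_L|$.

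The proof in the text avoids all of this by changing the process before estimating capacities. The event $\{H_{\ms B\setminus\ms S(\sigma)}<H_{\ms S(\sigma)}\}$ is measurable with respect to the trajectory up to $H_{\ms B}$, and a path started at $\sigma\in\ms G$ cannot leave $\ms V_\sigma=\ms G\cup\ms B$ without first hitting $\ms B=(\ms B\setminus\ms S(\sigma))\cup\ms S(\sigma)$; hence the probability is unchanged if $\sigma_t$ is replaced by the process reflected in $\ms V_\sigma$. For the reflected chain the state space is $\ms V_\sigma$, the downhill edges out of $\ms S(\sigma)$ simply do not exist, and the plain indicator of $\ms B\setminus\ms S(\sigma)$ is an admissible test function; its Dirichlet form reduces to the sum of $\mu_{\ms V}(\sigma')\wedge\mu_{\ms V}(\sigma'')$ over edges from $\ms B\setminus\ms S(\sigma)$ into $\ms G\cup\ms S(\sigma)$, which is bounded by $C_0\,Z_{\ms V}^{-1}e^{-\HH(\sigma)\beta}\{|\Lambda_L|\,e^{-(4-h)\beta}+e^{-mh\beta}\}$ and gives $C_0\,\delta_2(\beta)$ after division by the Thomson bound. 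Reflect first, then run your Dirichlet/Thomson argument; no cascade is needed.
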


\begin{proof}
Assume that $2<m<n$. The other cases are treated in a similar way. 
As in the proof of Proposition \ref{Lemma1}, denote by $\eta_t$ the
process $\sigma_t$ reflected at $\ms V_\sigma$, and by $\bb
P_{\sigma}^{\ms V}$ its distribution starting from $\sigma$. By 
\eqref{probatriangle}, 
\begin{equation*}
\bb P_\sigma \big[\, H_{\ms B \setminus \ms S(\sigma)} <  H_{\ms
  S(\sigma)}\,\big] \;=\; 
\bb P_\sigma ^{\ms V} \big[\, H_{\ms B \setminus \ms S(\sigma)} <  H_{\ms
  S(\sigma)}\,\big] \;\le\; \frac{\capa_{\ms V}(\sigma, \ms B \setminus
  \ms S(\sigma))}
{\capa_{\ms V}(\sigma, \ms S(\sigma))}\;,
\end{equation*}
where $\capa_{\ms V}$ represents the capacity with respect to the
process $\eta_t$.

We estimate separately these two capacities. Let $\eta^{(k)}$, $0\le
k\le m$, be a sequence of configurations such that $\eta^{(0)} =
\sigma$, $\eta^{(m)} \in \ms S(\sigma)$, and $\eta^{(k+1)}$ is
obtained from $\eta^{(k)}$ by flipping to $-1$ a $0$-spin surrounded
by two $-1$-spins.

Consider the flow $\varphi$ from $\sigma$ to $\ms S(\sigma)$ given by
$\varphi (\eta^{(k)}, \eta^{(k+1)})=1$ and $\varphi=0$ for all the other
bonds. By Thomson's principle,
\begin{equation*}
\frac{1}
{\capa_{\ms V}(\sigma, \ms S(\sigma))} \;\le\; m\, Z_{\ms V}\, e^{[\bb
  H(\sigma) + (m-1)h] \beta}\;.
\end{equation*}

To estimate the capacity on the numerator, denote by $\chi = \chi_{\ms
  B \setminus \ms S(\sigma)}$ the indicator function of the set $\ms B
\setminus \ms S(\sigma)$. By the Dirichlet principle,
\begin{equation*}
\capa_{\ms V}(\sigma, \ms B \setminus \ms S(\sigma))  \;\le\;
D_{\ms V} (\chi) \;\le\; \sum_{\sigma' \in \ms B \setminus \ms
  S(\sigma)} \sum_{\sigma''} \mu_{\ms V}(\sigma')\, \wedge \mu_{\ms
  V}(\sigma'') \;,
\end{equation*}
where the last sum is performed over all configurations $\sigma'' \in 
\ms V_\sigma \setminus [\ms B \setminus \ms S(\sigma)]$ which can be
obtained from $\sigma'$ by one flip. 

To estimate the last sum we examine all elements of
$\ms B \setminus \ms S(\sigma)$. There are at most $C_0\, |\Lambda_L|$
configurations $\sigma'$ obtained from a configuration in $\ms G$ by
flipping a spin at distance $2$ or more from the [inner or outer]
boundary of $A(\sigma)$. These configurations have only one neighbor
$\sigma''$ in $\ms V_\sigma$ and their energy is bounded below by
$\bb H(\sigma) + 4 - h$.

There are at most $C_0$ configurations $\sigma'$ not in $\ms G_m$ and
obtained from a configuration in $\ms G$ by flipping a spin
[surrounded by three spins of the same type] at the boundary of
$A(\sigma)$. These configurations have only one neighbor $\sigma''$ in
$\ms V_\sigma$ and their energy is bounded below by $\bb H(\sigma) + 2 - h$.

Finally there are at most $C_0$ configurations $\sigma'$ in
$\ms G_m\setminus \ms S(\sigma)$ or obtained from a configuration in
$\ms G$ by flipping a spin [surrounded by two spins of the same type]
at the boundary of $A(\sigma)$. These configurations have at most
$C_0$ neighbors $\sigma''$ in $\ms V_\sigma$ and their energy is
bounded below by $\bb H(\sigma) + mh$. It follows from the previous
estimates that
\begin{equation*}
D_{\ms V} (\chi) \;\le\; C_0\, \frac{1}{Z_{\ms V}} e^{- \bb H(\sigma) \beta}
\big\{\, |\Lambda_L|\, e^{-[4 - h]\beta } \,+\, e^{- mh \beta}\,\big\}\;. 
\end{equation*}

Putting together the previous estimates on the capacity, we conclude
that
\begin{equation*}
\bb P_\sigma \big[\, H_{\ms B \setminus \ms S(\sigma)} <  H_{\ms
  S(\sigma)}\,\big] \;\le\; C_0\, 
\big\{\, |\Lambda_L|\, e^{-[4 - n_0 h]\beta } \,+\, e^{- h \beta} \}\;.
\end{equation*}
This completes the proof of the lemma.
\end{proof}

Applying the previous result repeatedly yields that starting from a
configuration $\sigma$ with $nm$ $0$-spins forming a $(n\times
m)$-rectangle in a sea of $-1$-spins the process converges to $\moins$
if the shortest side has length $m\le n_0$. 

\begin{corollary}
\label{ml9}
Let $\sigma$ be a configuration with $n_0(n_0+1)$ $0$-spins which form
a $n_0\times (n_0+1)$-rectangle in a background of $-1$. Then,
\begin{equation*}
\bb P_{\sigma} [H_{\bf -1} = H_{\ms M}] \;\ge\; 1 \;-\; C_0 \, \delta_2(\beta)\;.
\end{equation*}
\end{corollary}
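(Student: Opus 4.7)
The approach is to iterate Lemma \ref{ml8} a bounded number of times. The key observation is that $n_0$ depends only on $h$, so any argument that chains a constant (in $\beta$ and $L$) number of applications of Lemma \ref{ml8} preserves the form of the bound $1-C_0\,\delta_2(\beta)$.

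Concretely, I would first introduce a deterministic list of rectangular shapes whose dimensions decrease as
\begin{equation*}
(n_0+1,n_0)\,\to\,(n_0,n_0)\,\to\,(n_0,n_0-1)\,\to\,(n_0-1,n_0-1)\,\to\,\cdots\,\to\,(3,2)\,\to\,(2,2)\,,
\end{equation*}
each obtained from the previous by stripping a row or column of length equal to the shorter side. Setting $\sigma_0:=\sigma$, I would inductively let $\sigma_{i+1}$ denote the random configuration in $\ms S(\sigma_i)$ that the chain actually enters: by Lemma \ref{ml8} this occurs before the chain enters $\ms B\setminus \ms S(\sigma_i)$ with probability at least $1-C_0\,\delta_2(\beta)$. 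Every element of $\ms S(\sigma_i)$ is a translate of a rectangle with the next prescribed dimensions, so Lemma \ref{ml8} is applicable again at the following step. In the terminal case $(m,n)=(2,2)$ we have $\ms S(\sigma)=\{\moins\}$, and the final application deposits the chain at $\moins$. The strong Markov property together with a union bound over the $K=O(n_0)$ iterations then yields
\begin{equation*}
\bb P_\sigma\big[\text{the chain visits } \sigma_1,\dots,\sigma_K,\moins \text{ in this order}\big] \;\ge\; 1 - K\, C_0\, \delta_2(\beta)\,.
\end{equation*}

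The only point still needing verification, which I expect to be routine, is that on this good event the chain cannot hit $\zero$ or $\plus$ before $\moins$. This is immediate from the construction: during each passage from $\sigma_i$ to $\sigma_{i+1}$ the chain remains in $\ms V(\sigma_i)$, and every configuration in $\bigcup_i \ms V(\sigma_i)$ has at most $n_0(n_0+1)+1$ spins different from $-1$, so none of them can equal $\zero$ or $\plus$. Hence $H_\moins = H_{\ms M}$ on the good event, and absorbing $K$ into the constant gives the corollary.
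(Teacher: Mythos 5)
Your proposal is correct and is exactly the argument the paper intends: the corollary is stated right after the sentence ``Applying the previous result repeatedly yields that \dots the process converges to $\moins$,'' and your chain of $O(n_0)$ applications of Lemma \ref{ml8} through the shrinking rectangles, with the strong Markov property, a union bound, and the observation that all intermediate valleys avoid $\zero$ and $\plus$, is precisely the omitted iteration. Nothing further is needed.
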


The next results shows that, in constrast, if $m>n_0$, then the
rectangle augments.  We first characterize how the process leaves the
neighborhood of such a configuration $\sigma$.

Fix $n_0< m \le n \le L-3$. Consider a configuration $\sigma$ with
$nm$ $0$-spins forming a $(n\times m)$-rectangle in a sea of
$-1$'s. Recall that we denote by $A(\sigma)$ the rectangle of
$0$-spins. Let $\ms V_\sigma$ be the valley of $\sigma$ whose elements
can be constructed from $\sigma$ as follows.

Fix $0\le k\le n_0$. We first flip sequentially $k$ spins of
$A(\sigma)$ from $0$ to $-1$. At each step we only flip a $0$-spin if
it is surrounded by two $-1$-spins. The set of all configurations
obtained by such a sequence of $k$ flips is represented by $\ms
G_k$. In particular, since at the beginning we may only flip the
corners of $A(\sigma)$, $\ms G_1$ is composed of the four
configurations obtained by flipping to $-1$ one corner of
$A(\sigma)$. On the other hand, since $m>n_0$, all configurations of
$\ms G_k$ have an energy equal to $\bb H(\sigma) + kh$. Denote by $\ms
G_{-1}$ the configuration obtained from $\sigma$ by flipping to $0$ a
$-1$-spin which is surrounded by one $0$-spin. Let $\ms G =
\cup_{-1\le k<n_0} \ms G_k$, and note that $\ms G_{n_0}$ has not been
included in the union.

The second and final stage in the construction of the valley $\ms
V_\sigma$ consists in flipping a spin of a configuration in $\ms
G$. More precisely, denote by $\ms B$ all configurations which are not
in $\ms G$, but which can be obtained from a configuration in $\ms G$
by flipping one spin. The set $\ms B$ is interpreted as the boundary
of the valley $\ms V_\sigma := \ms G \cup \ms B$.

Note that all configurations in $\ms V_\sigma$ can be obtained from
$\sigma$ by at most $n_0$ flips. Conversely, if $(\eta^{(0)} = \sigma,
\eta^{(1)}, \dots, \eta^{(n_0)})$ is a sequence of configurations
starting from $\sigma$ in which each element is obtained from the
previous one by flipping a different spin, one of the configurations
$\eta^{(k)}$ belong to the boundary of $\ms V_\sigma$.
 
Denote by $\ms R_{2}$ the set of $2(m+n-2)$ configurations obtained
from $\s$ by flipping to $0$ two adjacent $-1$-spins, each of which is
surrounded by a $0$-spin. Clearly, $\ms R_{2}$ is contained in $\ms
B$, and the energy of a configuration in $\ms R_{2}$ is equal to
$\bb H_{-1} = \bb H_{0} - h$, where $\bb H_{0} := \bb H(\sigma) +
(2-h)$ is the configuration in which only one $-1$-spin has flipped to
$0$. As $n_0 h > 2-h$, an inspection shows that all the elements of
$\ms A := \ms B \setminus \ms R_{2}$ have an energy strictly
larger that $\bb H_0$. In particular, starting from $\sigma$, the
process reaches the boundary $\ms B$ at $\ms R_{2}$. This is
the content of the next lemma.

Let 
\begin{equation}
\label{m02}
\delta_3 (\beta) \;=\; e^{-[(n_0+1)h-2]\beta} \;+\; 
|\Lambda_L|^{1/2} \, e^{-(2-h)\beta} \;+\; |\Lambda_L|\, e^{-2\beta}\;.
\end{equation}

\begin{lemma}
\label{ml1}
Fix $n_0< m \le n \le L-3$. Consider a configuration $\sigma$ with
$nm$ $0$-spins forming a $(n\times m)$-rectangle in a sea of $-1$'s.
Recall that $\ms A := \ms B \setminus \ms R_{2}$. 
Then, there exists a constant $C_0$ such that
\begin{equation*}
\PP_\s [ H_{\ms A} < H_{\ms R_{2}} ]  \;\le \; C_0\, \delta_3 (\beta) \;.
\end{equation*}
\end{lemma}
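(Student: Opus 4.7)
The plan is to mirror the proof of Lemma \ref{ml8}. I will first introduce the reflected process $\{\eta_t : t \ge 0\}$ obtained from $\sigma_t$ by forbidding exits from the valley $\ms V_\sigma$; its stationary law is $\mu_{\ms V}(\cdot) = Z_{\ms V}^{-1} e^{-\beta \bb H(\cdot)}$ on $\ms V_\sigma$. Coupling until the first exit and applying the capacity ratio inequality \eqref{probatriangle} will reduce the statement to
\begin{equation*}
\PP_\sigma[H_{\ms A} < H_{\ms R_2}] \;\le\; \frac{\capa_{\ms V}(\sigma, \ms A)}{\capa_{\ms V}(\sigma, \ms R_2)}\;.
\end{equation*}

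For the denominator I will use parallel flows through $\ms G_{-1}$: pair each $\xi_1 \in \ms G_{-1}$ with a distinct $\xi_2 \in \ms R_2$ obtained by flipping a $-1$-spin adjacent to the extra $0$-spin of $\xi_1$, and send mass $1/|\ms G_{-1}|$ along each path $\sigma \to \xi_1 \to \xi_2$. Since $|\ms G_{-1}| = 2(m+n)$ is of order $|\Lambda_L|^{1/2}$ and the bottleneck on each path carries weight $\mu_{\ms V}(\xi_1) = Z_{\ms V}^{-1} e^{-\beta[\bb H(\sigma) + 2 - h]}$ (equal to $\mu_{\ms V}(\xi_1) \wedge \mu_{\ms V}(\xi_2)$ since $\bb H(\xi_2) < \bb H(\xi_1)$), Thomson's principle \eqref{thom} will yield
\begin{equation*}
\capa_{\ms V}(\sigma, \ms R_2) \;\ge\; c\, |\Lambda_L|^{1/2}\, Z_{\ms V}^{-1}\, e^{-\beta[\bb H(\sigma) + 2-h]}\;.
\end{equation*}

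For the numerator I will apply the Dirichlet principle with test function $\chi_{\ms A}$ and bound the resulting sum by enumerating $\ms A$ according to how its elements arise from $\ms G$ by a single flip. The relevant configurations split into energy classes which, after division by the denominator estimate, give the three terms of $\delta_3(\beta)$: the set $\ms G_{n_0}$ (reached by $n_0$ successive corner-like flips), with energy $\bb H(\sigma) + n_0 h$ and $O(1)$ positions; configurations obtained from $\ms G_{-1}$ by a second, non-adjacent boundary $-1 \to 0$ flip, with energy $\bb H(\sigma) + 2(2-h)$ and $O(|\Lambda_L|)$ positions, yielding $|\Lambda_L|^{1/2} e^{-(2-h)\beta}$; configurations obtained from $\ms G_k$, $k \ge 1$, by an extra boundary $-1 \to 0$ flip, with energy $\ge \bb H(\sigma) + 2$ and $O(|\Lambda_L|^{1/2})$ positions, yielding $e^{-h\beta} \le e^{-[(n_0+1)h-2]\beta}$ (using $n_0 h \le 2$); and configurations produced by flipping a spin far from the droplet, with energy $\ge \bb H(\sigma) + (4-h)$ and $O(|\Lambda_L|^{3/2})$ positions, yielding $|\Lambda_L| e^{-(4-2h)\beta}$, well inside $|\Lambda_L| e^{-2\beta}$. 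For each class, $\mu_{\ms V}(\sigma') \wedge \mu_{\ms V}(\sigma'')$ equals $\mu_{\ms V}(\sigma')$ because the neighbors $\sigma''$ in $\ms V_\sigma \setminus \ms A$ lie in $\ms G$ and have strictly lower energy.

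The main obstacle is the combinatorial bookkeeping in the classification of $\ms A$: one must check exhaustively that every boundary configuration outside $\ms R_2$ either falls into one of the listed classes or has still higher energy. Two structural facts are essential: the supercriticality hypothesis $m > n_0$, which forces every corner removal to cost $+h$ and so prevents cheap shrinkage of a shortest side; and the parallel-flow improvement giving the $|\Lambda_L|^{1/2}$ factor in the denominator, without which the second class above would contribute the larger term $|\Lambda_L|\, e^{-(2-h)\beta}$ in place of $|\Lambda_L|^{1/2}\, e^{-(2-h)\beta}$.
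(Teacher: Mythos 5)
Your overall strategy is the paper's: reflect at $\ms V_\sigma$, reduce to the capacity ratio via \eqref{probatriangle}, lower-bound the denominator by a parallel flow through $\ms G_{-1}$ into $\ms R_2$, and upper-bound the numerator by the Dirichlet principle with $\chi_{\ms A}$. The gap is quantitative and sits exactly where you locate the "essential" improvement. You assert that $|\ms G_{-1}|=2(m+n)$ is of order $|\Lambda_L|^{1/2}$ and hence that $\capa_{\ms V}(\sigma,\ms R_2)\ge c\,|\Lambda_L|^{1/2}\,Z_{\ms V}^{-1}e^{-\beta[\bb H(\sigma)+2-h]}$. This is false in the generality of the lemma: the hypothesis is only $n_0<m\le n\le L-3$, and the lemma is applied (in Proposition \ref{ml11}) to rectangles as small as $(n_0+1)\times(n_0+1)$, for which $2(m+n)$ is a constant while $|\Lambda_L|^{1/2}=L\to\infty$. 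The parallel-flow construction yields only $\capa_{\ms V}(\sigma,\ms R_2)\ge c\,(n+m)\,Z_{\ms V}^{-1}e^{-\beta \bb H_0}$, and the paper explicitly warns that $n$ "can be as small as $n_0+1$".

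This matters because your numerator counts are stated as uniform powers of $|\Lambda_L|$ instead of in terms of $n+m$. Your second class (an element of $\ms G_{-1}$ plus one further boundary flip) has $O((n+m)^2)$ elements, and your third class $O(n+m)$; writing these as $O(|\Lambda_L|)$ and $O(|\Lambda_L|^{1/2})$ and dividing by the correct denominator $c(n+m)$, which may be $O(1)$, leaves $|\Lambda_L|\,e^{-(2-h)\beta}$ and $|\Lambda_L|^{1/2}e^{-h\beta}$, neither of which is dominated by $\delta_3(\beta)$. The repair is to keep the $(n+m)$-dependence on both sides so that it cancels: $(n+m)^2e^{-(2-h)\beta}$ and $(n+m)e^{-h\beta}$ in the numerator, divided by $(n+m)$, give $(n+m)e^{-(2-h)\beta}\le 4|\Lambda_L|^{1/2}e^{-(2-h)\beta}$ and $C_0e^{-h\beta}\le C_0e^{-[(n_0+1)h-2]\beta}$, which is exactly how the paper obtains $\delta_3$. (Your enumeration also omits a few subcases the paper treats separately --- the flip adjacent to the extra $0$-spin but off the boundary of $A(\sigma)$, whose energy is only $\bb H_0+2-h$, and the $k=0$ boundary flips landing in $\ms B\setminus\ms G_{-1}$ --- but those are the bookkeeping details you flagged; the $(n+m)$ versus $|\Lambda_L|^{1/2}$ confusion is the substantive defect.)
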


\begin{proof}
Since we may not leave the set $\ms V_\sigma$ without crossing its
boundary $\ms B$, the probability appearing in the statement of
the lemma is equal to the one for the reflected process at $\ms
V_\sigma$, that is, the one in which we forbid jumps
from $\ms V_\sigma$ to its complement. We estimate the probability
for this later dynamics which is restricted to $\ms V_\sigma$.

By \eqref{probatriangle}, the probability appearing in the statement
of the lemma is bounded above by $\capa_{\ms V} (\s , \ms A)/
\capa_{\ms V} (\s , \ms R_{2})$, where $\capa_{\ms V}$ stands for the
capacity with respect to the reflected process. We estimate the numerator
by the Dirichlet principle and the denominator by the Thomson
principle. 

We start with the denominator. Denote by $\eta^{(1)}, \dots,
\eta^{(2(n+m-2))}$, the configurations of $\ms R_2$, and by $\bs x_j$,
$\bs y_j \in \bb Z^2$ the positions of the two extra $0$-spins of
$\eta^{(j)}$. Assume that $\bs x_j \not = \bs x_k$ for $j\not =
k$. Consider the flow $\varphi$ from $\sigma$ to $\ms R_{2}$ such that
$\varphi(\sigma, \sigma^{\bs x_j}) = 1/[2(n+m-2)]$,
$\varphi(\sigma^{\bs x_j}, \eta^{(j)}) = 1/[2(n+m-2)]$, and $\varphi =
0$ at all the other bonds. By the Thomson principle, since $\mu_{\ms
  V}(\sigma^{\bs x_j})$ is less than or equal to $\mu_{\ms V}(\sigma)$
and $\mu_{\ms V}(\eta^{(j)})$,
\begin{equation}
\label{m2}
\frac 1{\capa_{\ms V} (\s , \ms R_{2})} \;\le\; \frac 1{n+m-2}\,
Z_{\ms V}\, e^{\beta \bb H_0}\;.
\end{equation}

We turn to the numerator.  Denote by $f$ the indicator function of the
set $\ms A$. Since $f$ vanishes at $\sigma$ and is equal to $1$
at $\ms A$, by the Dirichlet principle, $\capa_{\ms V} (\s , \ms
A)\le D_{\ms V}(f)$. On the other hand,
\begin{equation}
\label{m1}
D_{\ms V}(f) \;=\; \sum_{\eta\in \ms A} \sum_{\xi \sim \eta} \mu_{\ms
  V}(\eta) \wedge \mu_{\ms V}(\xi)\;,
\end{equation}
where the second sum is performed over all configurations in
$\ms V_\sigma \setminus A$ which can be obtained from $\eta$
by one spin flip. This relation is represented by $\xi\sim \eta$.

We first consider the configuration $\eta$ in $\ms A$ which have a
neighbor in $\ms G_{-1}$. Fix $\xi\in \ms G_{-1}$. Consider the
configurations obtained from $\xi$ by flipping a spin which is not at
the boundary of $A(\sigma)$. There are at most $|\Lambda_L|$ of such
spins, and the energy of the configurations obtained by this spin flip
is bounded below by $\bb H_0 + 4 - h$. There is one special spin,
though, the one which is next to the extra spin and not at the
boundary of $A(\sigma)$. The energy of the configuration obtained by
flipping this spin to $0$ or to $+1$ is bounded below by $\bb H_0 + 2
- h$. The contribution of these terms to \eqref{m1} is thus bounded
above by
\begin{equation*}
2\, (n+m)\, \frac 1{Z_{\ms V}}\, e^{-\beta \bb H_0}\, \big\{\, |\Lambda_L|\, 
e^{-(4-h) \beta} \,+\, e^{-(2-h) \beta} \,\big\} \;,
\end{equation*}
where the factor $2\, (n+m)$ comes from the total number of
configurations in $\ms G_{-1}$.

We turn to the configurations obtained from $\xi$ by flipping a spin
at the boundary of $A(\sigma)$. Since the configuration resulting from
this flip can not be in $\ms R_2$, their energy is bounded below by
$\bb H_0 + 2 - h$. The contribution of these terms to the sum
\eqref{m1} is thus bounded by
\begin{equation*}
4\, (n+m)^2\, \frac 1{Z_{\ms V}}\, e^{-\beta \bb H_0}\, e^{-(2-h) \beta} \;,
\end{equation*}
the extra factor $2(n+m)$ coming from the possible positions of the
extra spin flip at the boundary.

Consider now configurations $\eta$ in $\ms A$ which have a neighbor in
a set $\ms G_{k}$, $0\le k<n_0$. Fix $0\le k<n_0$ and $\xi \in \ms
G_{k}$.  The configuration $\xi$ is formed by a connected set $A(\xi)
\subset A(\sigma)$ of $0$-spins in a sea of $-1$-spins.

There is one special case which is examined separately. Suppose that
$\xi$ belongs to $\ms G_{n_0-1}$ and $\eta$ to $\ms G_{n_0}$. There
are $C(n_0)$ of such pairs, and the energy of $\eta$ is equal
to $\bb H(\sigma) + n_0h = \bb H_0 + (n_0+1) h - 2$. We exclude from now
in the analysis these pairs.

Apart from this case, there are two types of configurations $\eta\in
\ms A$ which can be obtained from $\xi$ by a spin flip. The first ones
are the ones in which $\eta$ and $\xi$ differ by a spin which belongs
to the inner or outer boundary of $A(\xi)$. There are at most $4(n+m)
\le 8n$ of such configurations. The energy of these configurations is
bounded below by $\bb H(\xi) + 2 - h = \bb H(\sigma) + kh + 2 - h = \bb
H_0 + kh$. The minimal case occurs when a $-1$-spin which has a
$0$-spin as neighbor is switched to $0$.

The previous estimate is not good enough in the case $k=0$ because in
the argument we did not exclude the configurations in $\ms
G_{-1}$. For $k=0$ if $\eta$ belongs to $\ms B \setminus \ms
G_{-1}$, we obtain that $\bb H(\eta) \ge \bb H(\sigma) + 2 + h = \bb
H_0 + 2h$. The right-hand side of this inequality corresponds to the
case in which a $0$-spin surrounded by three $0$-spins has been changed
to $-1$.  In conclusion, if the flip occurs at the boundary of
$A(\xi)$, there are at most $8n$ configurations and the energy of such
a configuration is bounded below by $\bb H_0 + h$. 

If the flip did not occur at the boundary of $A(\xi)$, there are at
most $|\Lambda_L|$ possible configurations, and the energy of these
configurations is bounded below by $\bb H(\xi) + 4 - h = \bb
H(\sigma) + kh + 4 - h = \bb H_0 + 2 + kh$. 

The previous estimates yield that the Dirichlet form \eqref{m1} is
bounded by
\begin{equation*}
\frac{C_0}{Z_{\ms V}} e^{-\beta \bb H_0} \Big\{
n\, |\Lambda_L|\, e^{-(4-h)\beta} \,+\, n^2 \, e^{-(2-h)\beta} \,+\, 
e^{-[(n_0+1)h-2]\beta} + n e^{-\beta h} + |\Lambda_L|
e^{-2\beta}\Big\}\;. 
\end{equation*}
Multiplying this expression by \eqref{m2} yields that the probability
appearing in the statement of the lemma is bounded above by
\begin{equation*}
C_0\, \Big\{ \, n \, e^{-(2-h)\beta} \,+\, 
e^{-[(n_0+1)h-2]\beta} \,+\, |\Lambda_L|\, e^{-2\beta} \Big\}
\end{equation*}
because $(n_0+1)h-2 < h$ and $4-h>2$. We bounded $1/n$ by $1$ when $n$
appeared in the denominator because $n$ can be as small as $n_0+1$.
This completes the proof of the lemma since $n^2 \le |\Lambda_L|$. 
\end{proof}

The previous lemma asserts that the process leaves the neighborhood of
a large rectangle of $0$-spins in a sea of $-1$ spins by switching
from $-1$ to $0$ two adjacent spins at the outer boundary of the
rectangle. At this point, applying Lemma \ref{mas3} yields that with a
probability close to $1$ these two adjacent $0$-spins will increase to
$2n_0$ adjacent $0$-spins. To increase it further, we apply the next
lemma.

This result will be used in two different situations: 
\begin{itemize}
\item[(B1)] To increase in any direction a rectangle with $2n_0$
  adjacent $0$-spins whose distance from the corners is larger than
  $2n_0$ to a rectangle of adjacent $0$-spins which is at distance
  less than $2n_0$ from one of the corners;
\item[(B2)] To increase a rectangle with $k\ge 2n_0$ adjacent
  $0$-spins which contains one corner and is at a distance larger than
  $2n_0$ from the other corner to a rectangle of adjacent $0$-spins
  which is at distance less than $2n_0$ from this later corner.
\end{itemize}

To avoid a too strong assumption on the rate at which the cube
$\Lambda_L$ increases, we do not impose [as in the Assertions
\ref{mas1}--\ref{mas2} and Lemma \ref{mas3}] the extra rectangle of
$0$-spins to grow without never shrinking or to grow while the spins
at the corners stay put.

As in the proof of Lemma \ref{ml1}, we construct a set of
configurations in two stages. We consider below the case in which the
extra rectangle is far from the corners. The case in which it contains
one of the corners can be handled similarly.

Fix $n_0\le m \le n \le L-3$. Denote by $\sigma$ a configuration in
which $nm$ $0$-spins form a $(n\times m)$-rectangle in a sea of
$-1$-spins. Denote this rectangle by $A(\sigma)$, and assume, without
loss of generality, that $m$ is the length and $n$ the height of
$A(\sigma)$. Let $(x,y)$ be the position of the upper-left corner of
$A(\sigma)$.

We attach to one of the sides of $A(\sigma)$ an extra
$(p\times 1)$-rectangle of $0$-spins, where $p > n_0$. In particular,
the length of the side to which this extra rectangle is attached has
to be larger than $n_0$. To fix ideas, suppose that the extra
$0$-spins are attached to the upper side of length $m$ of the
rectangle and assume that $m>5n_0$. As explained previously, the case
$m\le 5n_0$ is handled by Lemma \ref{mas3}.

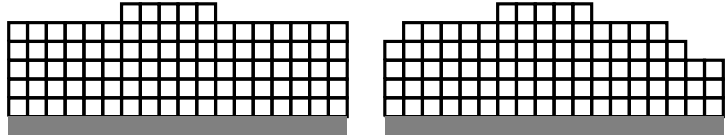
\begin{figure}[!h]
\centering
\begin{tikzpicture}[scale = .25]
\foreach \y in {0, ..., 4}
\foreach \x in {0, ..., 17}
\draw[very thick] (\x, \y) -- (\x +1, \y) -- (\x
+ 1, \y + 1) -- (\x , \y + 1) -- (\x , \y );
\foreach \y in {5}
\foreach \x in {6, ..., 10}
\draw[very thick] (\x, \y) -- (\x +1, \y) -- (\x
+ 1, \y + 1) -- (\x , \y + 1) -- (\x , \y );
\fill[gray] (0, -1) -- (18, -1) -- (18, 0) -- (0,0) -- 
 	(0,-1);	
\foreach \y in {0, ..., 2}
\foreach \x in {20, ..., 37}
\draw[very thick] (\x, \y) -- (\x +1, \y) -- (\x
+ 1, \y + 1) -- (\x , \y + 1) -- (\x , \y );
\foreach \y in {3}
\foreach \x in {20, ..., 35}
\draw[very thick] (\x, \y) -- (\x +1, \y) -- (\x
+ 1, \y + 1) -- (\x , \y + 1) -- (\x , \y );
\foreach \y in {4}
\foreach \x in {21, ..., 34}
\draw[very thick] (\x, \y) -- (\x +1, \y) -- (\x
+ 1, \y + 1) -- (\x , \y + 1) -- (\x , \y );
\foreach \y in {5}
\foreach \x in {26, ..., 30}
\draw[very thick] (\x, \y) -- (\x +1, \y) -- (\x
+ 1, \y + 1) -- (\x , \y + 1) -- (\x , \y );
\fill[gray] (20, -1) -- (38, -1) -- (38, 0) -- (20,0) -- 
 	(20,-1);	
	\end{tikzpicture}
	\caption{Assume that $n_0=3$. The first picture provides an
          example of a configuration $\eta^{(c,d)}$. Here,
          $m=18\le n$, $c=6$, $d=10$ and $p=5$. The gray portion
          indicates that the rectangle continues below as its height
          is larger than $18$.  The second picture presents a
          configuration in $\ms G_{c,d,6}$. We chose $k=6>n_0$ to make
          the definition clear.}
	\label{fig4}
\end{figure}

Denote by $\eta^{(c,d)}$, $2n_0\le c<d \le m-2n_0$, $d-c>n_0$, the
configuration obtained from $\sigma$ by flipping from $-1$ to $0$ the
$([d-c]\times 1)$-rectangle, denoted by $R_{c,d}$, given by
$\{(x+c,y+1), \dots, (x+d,y+1)\}$. Denote by $\ms G_{c,d,k}$, $0\le
k\le n_0$, the configurations obtained from $\eta^{(c,d)}$ by
sequentially flipping to $-1$, close to the corners of $A(\sigma)$, a
total of $k$ $0$-spins surrounded, at the moment they are switched, by
two $-1$ spins. We do not flip spins in $R_{c,d}$.

In the case [not considered below] where the rectangle $R_{c,d}$
includes one corner, say $c=0$, we treat the spins at $(x,y+1), \dots,
(x+n_0,y+1)$ as belonging to the corner and we allow them to be
flipped.

Let $\ms G = \cup_{c,d} \cup_{0\le k <n_0} \ms G_{c,d,k}$, where the
first union is performed over all indices such that $2n_0\le c<d \le
m-2n_0$, $d-c>n_0$. Note that we excluded $k=n_0$ in this
union. Denote by $\ms B$ the configurations which do not belong to
$\ms G$ and which can be obtained from a configuration in $\ms G$ by
flipping one spin. The set $\ms B$ is treated as the boundary of $\ms
G$. 

Note that $\ms B$ contains configurations in $\ms G_{c,d,n_0}$ and
also configurations in $\ms G_{c,d,k}$ in which $d-c=n_0$. Let $\ms
A_1$, $\ms A_2$ be such configurations: 
\begin{equation*}
\ms A_1 \; :=\;  \bigcup_{c,d} \ms G_{c,d,n_0} \;, \quad 
\ms A_2 \; :=\; \bigcup_{c',d'} \bigcup_{0\le k <n_0} \ms G_{c',d',k}\;,
\quad \ms A \; :=\; \ms A_1 \cup \ms A_2\;,
\end{equation*}
where the first union is performed over all indices such that $2n_0 \le
c<d \le m-2n_0$, $d-c>n_0$, and the second one is performed over all
indices such that $2n_0 < c'<d' \le m-2n_0$, $d'-c'=n_0$. The set $\ms
B$ also contains configurations in which a $0$-spin in a rectangle
$R_{c,d}$ surrounded by $3$ $0$-spins is flipped to $\pm 1$.

All configurations in $\ms G$ are similar to the ones represented in
Figure \ref{fig4}. They are obtained by adding a
$(p\times 1)$-rectangle of $0$-spins to the upper side of $A(\sigma)$
and by switching to $-1$ some of the spins of $A(\sigma)$ close to the
corners.

For $t< H_{\ms B}$, denote by $c_t$, resp. $d_t$, the position at time
$t$ of the leftmost, resp. rightmost, $0$-spin of the upper
rectangle.  Let $\tau^*$ be the first time $c_t \le 2n_0$ or
$d_t \ge m-2n_0$:
\begin{equation*}
\tau^* \;:=\; \inf\{t\ge 0:  c_t \le 2n_0 \text{ or } d_t \ge
m-2n_0\}\;. 
\end{equation*}
and let $\delta_4'(\beta) = |\Lambda_L| \, e^{-[4-h]\beta} \;+\;
|\Lambda_L|^{1/2} \, e^{-[2-h]\beta}$,
\begin{equation}
\label{m04}
\delta_4(\beta) \;:=\; |\Lambda_L|^{3/2} \, e^{-[4-h]\beta} 
\;+\; |\Lambda_L| \, e^{-[2-h]\beta}\;.
\end{equation}

\begin{lemma}
\label{ml2}
Let $\sigma' = \eta^{(c_0,d_0)}$, for some $2n_0 < c_0<d_0 < m-2n_0$,
$d_0-c_0\ge 2n_0$. Then, there exists a finite constant $C_0$ such
that 
\begin{equation*}
\PP_{\s'} [ H_{\ms B} < \tau^* ]  \;\le \; C_0\, \delta_4 (\beta) 
\end{equation*}
for all $\beta\ge C_0$.
\end{lemma}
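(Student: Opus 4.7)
The approach will mirror that of Lemma \ref{ml1}. Since the process cannot leave $\ms V_\sigma$ without first entering $\ms B$, the probability in question coincides with the analogous one for the process reflected on $\ms V_\sigma$. Let $\mc T \subset \ms G$ denote the set of configurations with $c = 2n_0$ or $d = m-2n_0$; by the one-step structure of the $(c,d)$-walk the hitting time $\tau^*$ coincides with $H_{\mc T}$ for the reflected dynamics. Applying \eqref{probatriangle},
\begin{equation*}
\PP_{\sigma'}[H_{\ms B} < \tau^*] \;\le\; \frac{\capa_{\ms V}(\sigma',\ms B)}{\capa_{\ms V}(\sigma',\mc T)}\;\cdot
\end{equation*}

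For the denominator, the plan is to apply the Thomson principle to a unit flow supported on the monotone extension path
\begin{equation*}
\sigma' = \eta^{(c_0,d_0)} \to \eta^{(c_0-1,d_0)} \to \cdots \to \eta^{(2n_0,d_0)} \in \mc T.
\end{equation*}
Each step flips a $-1$-spin bordered by two $0$-spins, one in $A(\sigma)$ and one in $R_{c_0,d_0}$, decreasing the energy by $h$. The resistance sum is therefore geometric and dominated by its first term, yielding $\capa_{\ms V}(\sigma',\mc T) \ge C_0^{-1}\,\mu_{\ms V}(\sigma')$.

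For the numerator, I plan to use the Dirichlet principle with $f = \chi_{\ms B}$ and classify the edges $(\eta, \xi)$ with $\eta \in \ms B$, $\xi \in \ms G$, $\xi \sim \eta$ by the type of flip: (i) a bulk $-1 \to 0$ flip at a site far from $A(\sigma) \cup R_{c,d}$, costing $4-h$ with $O(|\Lambda_L|)$ positions; (ii) a $-1 \to 0$ flip adjacent to $A(\sigma) \cup R_{c,d}$, costing $2-h$ with only $O(m+n) = O(|\Lambda_L|^{1/2})$ positions; (iii) configurations in $\ms A_1 \cup \ms A_2$ corresponding to an $n_0$-th corner flip of $A(\sigma)$ or to shrinkage of $R_{c,d}$ down to length $n_0+1$, requiring a barrier of at least $n_0 h > 2-h$; and (iv) higher-cost transitions such as flips to $+1$ or creation of interior holes in $R_{c,d}$, each costing at least $2$. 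After using $\sum_{(c,d)} \mu_{\ms V}(\eta^{(c,d)}) \le 1$ to collapse the summation over rectangle sizes, classes (i) and (ii) produce the two terms of $\delta_4(\beta)$, while (iii) and (iv) are absorbed under the hypotheses $0<h<1$ and $(n_0+1)h > 2$.

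The principal technical obstacle is class (iii): configurations in $\ms A_1$ with large $d-c$ have very low Gibbs energy, so the bare indicator test function would wash out the corner-flip barrier $n_0 h$ from the Dirichlet form. I expect to recover the correct factor $e^{-\beta n_0 h}$ by refining $f$ to interpolate linearly through the chain $\ms G_{c,d,0}, \ms G_{c,d,1}, \ldots, \ms G_{c,d,n_0}$ for each valid $(c,d)$, so that the Dirichlet form is sensitive to the intermediate energies $\bb H(\eta^{(c,d)}) + k h$ rather than only to the endpoint energy of $\ms A_1$.
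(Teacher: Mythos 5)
Your reduction of $\PP_{\sigma'}[H_{\ms B}<\tau^*]$ to the capacity ratio $\capa_{\ms V}(\sigma',\ms B)/\capa_{\ms V}(\sigma',\mc T)$ via \eqref{probatriangle} cannot yield the stated bound, and the obstacle is not the one you diagnose in your last paragraph. The difficulty is not that the indicator test function overestimates $\capa_{\ms V}(\sigma',\ms B)$; it is that this capacity is genuinely of order $\mu_{\ms V}(\sigma')$, so the ratio is bounded \emph{below} by a positive constant. To see this, take $d_0-c_0=2n_0$ with $m$ large and consider the flow supported on the path which first extends the upper rectangle, $\eta^{(c_0,d_0)}\to\eta^{(c_0,d_0+1)}\to\cdots\to\eta^{(c_0,d)}$, and then erodes $n_0$ corner spins of $A(\sigma)$ to land in $\ms G_{c_0,d,n_0}\subset\ms A_1\subset\ms B$. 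The energy decreases by $h$ at each extension step, and the terminal energy $\bb H(\sigma)+2-(d-c_0)h+n_0h$ drops below $\bb H(\sigma')$ as soon as $d-d_0>n_0$; the Thomson principle applied to this flow gives $\capa_{\ms V}(\sigma',\ms B)\ge C_0^{-1}\mu_{\ms V}(\sigma')$, while $\capa_{\ms V}(\sigma',\mc T)\le\lambda(\sigma')\,\mu_{\ms V}(\sigma')\le C_0\,\mu_{\ms V}(\sigma')$. No refinement of the test function can push the Dirichlet form below a lower bound certified by a flow. The underlying reason is structural: the dominant low-resistance routes from $\sigma'$ to $\ms B$ pass through (a neighborhood of) $\mc T$ first, and capacities --- hence every variant of \eqref{probatriangle} --- are blind to the order in which the two sets are visited. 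The event $\{H_{\ms B}<\tau^*\}$ is rare not because of an energy barrier but because it is a race between mechanisms running on different time scales, which is why the potential-theoretic scheme that works for Lemma \ref{ml1} breaks down here.

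The paper's proof is accordingly of a different nature. It observes that, up to time $H_{\ms B}$, the three relevant mechanisms --- the growth $x_t=d_t-c_t$ of the upper rectangle, the erosion $|C_t|$ of the corners of $A(\sigma)$, and the occurrence $b_t$ of any other unwanted flip --- are driven by disjoint sets of spins and are therefore independent. Each is dominated by an explicit one-dimensional chain: $x_t$ by an asymmetric walk with rates $2$ up and $2e^{-\beta h}$ down, $|C_t|$ by a birth-and-death chain with birth rate of order $e^{-\beta h}$, and $b_t$ by a single jump at rate $\delta_4'(\beta)$. The event $\{H_{\ms B}<\tau^*\}$ is covered by the events that the walk backtracks by $n_0$, that the erosion completes, or that a bad flip occurs, before the walk reaches $m$; these are controlled respectively by an explicit harmonic function and by the exponential Chebyshev inequality combined with the eigenfunction estimate of Lemma \ref{ml3}. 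If you wish to retain a potential-theoretic formulation you would need a state space that records the order of visits, which in effect reproduces this coupling argument.
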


\begin{proof}
Let $C_t$ be the set of spins in $A(\sigma)$ close to the corners which
takes the value $-1$ at time $t$. In the right picture of Figure
\ref{fig4} the set $C_t$ consists of the $6$ squares at the corners
which have been removed from the left picture. Set $C_t$ to be $\Lambda_L$ for
$t\ge H_{\ms B}$. Before hitting $\ms B$, the total number of sites of
$C_t$, represented by $|C_t|$, is strictly bounded by $n_0$. Moreover,
Before hitting $\ms B$, $|C_t|$, which starts from $0$, is bounded by
a Markov process $m_t$ which jumps from $k\ge 0$ to $k+1$ at rate $n_0
e^{-\beta h}$ and from $k+1$ to $k$ at rate $1$.

Let $b_t = \mb 1\{\sigma_t \in \ms G^c\setminus \ms A\}$. This process
starts from $0$ and jumps to $1$ when $\sigma_t$ reaches $\ms B$
through a configuration which is not in $\ms A$. Inspecting all
possible jumps yields that the process $b_t$ is bounded by a process
$z_t$ which starts from $0$ and jumps to $1$ at rate $|\Lambda_L|
e^{-[4-h]\beta} + 2(n+m) e^{-[2-h]\beta} \le \delta_4'(\beta)$, where
$\delta_4'(\beta)$ has been introduced just above \eqref{m04}

The key observation in the proof of this lemma is that the processes
$(c_t,d_t)$, $C_t$ and $b_t$ are independent until the set $\ms B$ is
attained because they involve different spin jumps.

Let $x_t = d_t - c_t$. Before hitting $\ms B$, $x_t$ evolves as a
random walk in $\bb Z$ which starts from $d-c\ge 2n_0$ and jumps from
$k$ to $k+1$ at rate $2$ and from $k+1$ to $k$ at rate $2 e^{-\beta
  h}$. Let $\tau_0$ be the first time $x_t\le n_0$.

Let $H^b_1$ be the hitting time of $1$ by the process $b_t$, and let
$H^C_{n_0}$ be the first time $|C_t|$ attains $n_0$.  The event
$\{H_{\ms B} < \tau^*\}$ is contained in the event $\{\tau_0 <
\tau^*\} \cup \{H^b_1 < \tau^*\} \cup \{H^C_{n_0} < \tau^*\}$.

Consider three independent Markov chains, $X_t$, $Y_t$, $Z_t$.  The
first one takes value in $\{0, \dots, m\}$, it starts from $2n_0$, and
jumps from $k$ to $k+1$ at rate $2$ and from $k+1$ to $k$ at rate $2
e^{-\beta h}$.  The process $Y_t$ takes value in $\{0, \dots, n_0\}$,
it starts from $0$, and jumps from $k$ to $k+1$ at rate $2 e^{-\beta
  h}$ and from $k+1$ to $k$ at rate $1$. The last one takes value in
$\{0, 1\}$, it starts from $0$, and jumps from $0$ to $1$ at rate
$\delta_4'(\beta)$.

Before time $H_{\ms B}$ we may couple $(x_t, |C_t|, b_t)$ with $(X_t,
Y_t, Z_t)$ in such a way that $X_t = x_t$, $|C_t| \le Y_t$ and $b_t\le
Z_t$. In particular, $\{\tau_0 < \tau^*\} \subset \{H^X_0 <
H^X_{m}\}$, $\{H^C_{n_0} < \tau^*\} \subset \{H^Y_{n_0} < H^X_{m}\}$,
$\{H^b_1 < \tau^*\} \subset \{H^Z_1 < H^X_{m}\}$. In these formulas,
$H^W$ stands for the hitting time of the process $W$. Hence, the
probability appearing in the statement of the lemma is bounded above
by
\begin{equation}
\label{m03}
P\big[ H^X_{n_0} < H^X_{m} \big] \;+\; P\big[ H^Y_{n_0} < H^X_{m} < H^X_{n_0}\big] 
\;+\; P\big[ H^Z_1 < H^X_{m} < H^X_{n_0} \big] \;.
\end{equation}
We estimate each term separately.

The first one is easy. Denote by $P^X_k$ the distribution of $X_t$
starting from  $k$. Let $f(k) = P^X_k[H^X_{n_0} < H^X_{m}]$, which is
harmonic. It can be computed explicitly and one gets that
\begin{equation*}
P\big[ H^X_{n_0} < H^X_{m} \big] \;=\; P^X_{2n_0}\big[ H^X_{n_0} < H^X_{m} \big]
\;\le\; 2 e^{-n_0 h \beta}
\end{equation*}
provided $\beta\ge C_0$.

We turn to the second term of \eqref{m03}. On the set $\{H^X_{m} <
H^X_{n_0}\}$ we may replace $X$ by a random walk on $\bb Z$ and
estimate $P\big[ H^Y_{n_0} < H^X_{m}]$. As $X$ and $Y$ are independent,
we condition on $Y$ and treat $H^Y_{n_0}$ as a positive real number.
The set $\{H^Y_{n_0} < H^X_{m}\}$ is contained in $\{ X_{J} \le m\}$
where $J=H^Y_{n_0}$. Fix $\theta>0$. By the exponential Chebyshev
inequality and since $m\le n$ [the sizes of the rectangle $A(\sigma)$], 
\begin{equation*}
P^X_{2n_0}\big[ X_{J} \le m \big] \;\le\; P^X_{0}\big[ X_{J} \le n
\big] \;\le\; e^{\theta n} E^X_{0}\big[ e^{-\theta X_{J}} \big]\;.
\end{equation*}
Choose $\theta = 1/n$ and compute the expectation to obtain that the
previous expression is bounded by $3 \, \exp\{ -(2/n) H^Y_{n_0}\}$
provided $\beta\ge C_0$. By Lemma \ref{ml3}, 
\begin{equation*}
E \big[ e^{ -(2/n) H^Y_{n_0}} \big] \;\le\; C_0 \, n\, e^{- n_0 h
  \beta}\;,
\end{equation*}
where $E$ represents the expectation with respect to $P$.

The third expression in \eqref{m03} is estimated similarly. The
argument yields that it is bounded by 
\begin{equation*}
3 \, E \big[ e^{ -(2/n) H^Z_{1}} \big] \;\le\;
3\, \frac{\delta_4'(\beta)}{(2/n) + \delta_4'(\beta)}
\;\le\; 2\, n\, \delta_4'(\beta)  \;,
\end{equation*}
where $\delta_4'(\beta)$ has been introduced just above \eqref{m04}. This
completes the proof of the lemma because $n_0h > 2-h$.
\end{proof}

\begin{remark}
\label{rm2}
One could improve the previous argument and obtain a better estimate
by allowing the spins at the boundary of $A(\sigma)$ to flip while the
rectangle $R_{c,d}$ fills the upper side.
\end{remark}

The next result describes how the supercritical droplet of $0$-spins
grows. Let
\begin{equation*}
\delta_5 (\beta) \;:=\; e^{-[(n_0+1)h-2]\beta} \;+\;  e^{-h\beta}
\;+\; |\Lambda_L|^{3/2} \, e^{-(2-h) \beta} \;.
\end{equation*}
A simple computation based on the bound $|\Lambda_L| e^{-2\beta}\le
1$, which holds for $\beta$ large enough, shows that there exists
$C_0$ such that
\begin{equation}
\label{m4}
\delta_1 (\beta)  \;+\; \delta_3 (\beta) \;+\; 
|\Lambda_L|^{1/2} \, \delta_4 (\beta) \;\le\; C_0 \,\delta_5 (\beta)
\end{equation}
for all $\beta \ge C_0$.

\begin{proposition}
\label{ml11}
Fix $n_0<m\le n\le L$.  Let $\sigma$ be a configuration with $mn$
$0$-spins which form a $m\times n$-rectangle in a background of
$-1$-spins. Then, there exists a constant $C_0$ such that
\begin{equation*}
\bb P_{\sigma} [H_{\ms S_L\setminus\{\sigma\}} = H_{\ms S(\sigma)}]
\;\ge\; 1 \,-\, C_0\, \delta_5 (\beta) \;,
\end{equation*}
for all $\beta\ge C_0$. In this equation, if $n\le L-3$, $\ms
S(\sigma)$ is the set of four configurations in which a row or a
column of $0$ spins is added to the rectangle $A(\sigma)$. If $m <
n=L-2$, the set $\ms S(\sigma)$ is a triple which includes a band of
$0$ spins of width $m$ and two configurations in which a row or a
column of $0$ spins of length $n$ is added to the rectangle
$A(\sigma)$.  If $m\le L-3$, $n=L$, the set $\ms S(\sigma)$ is a pair
formed by two bands of $0$ spins of width $m+1$. If $m= n=L-2$, $\ms
S(\sigma)$ is a pair of two bands of width $L-2$. If $n_0<m=L-2$,
$n=L$, $\ms S(\sigma) =\{{\bf 0}\}$.
\end{proposition}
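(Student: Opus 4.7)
The plan is to chain the previous lemmata to follow the growth of a new row (or column) of $0$-spins along one side of the rectangle $A(\sigma)$, from a two-site seed up to a complete row. At each intermediate step the strong Markov property will be used, and the total error will be controlled via \eqref{m4}.

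First, since $m > n_0$, Lemma \ref{ml1} applies to $\sigma$: with probability at least $1 - C_0\,\delta_3(\beta)$, the process exits the valley $\ms V_\sigma$ at some $\eta \in \ms R_2$, so that two adjacent $-1$-spins on the outer boundary of $A(\sigma)$ have been flipped to $0$. By symmetry I may assume these seed $0$-spins lie above the upper side of length $m$ of $A(\sigma)$.

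Next, I turn this seed into a complete new row through at most four further steps, each controlled by Lemma \ref{mas3} or Lemma \ref{ml2}. Apply Lemma \ref{mas3} (in situation (A1) if the seed is far from both corners, or (A3) if it is near one of them) to extend the seed, without anything else happening, by $n_0$ on each available side, at a cost of $C_0\,\delta_1(\beta)$. If both endpoints are still at distance larger than $2n_0$ from the corners of the upper side, apply Lemma \ref{ml2} to grow the seed until one endpoint comes within distance $2n_0$ of a corner, at a cost of $C_0\,\delta_4(\beta)$. Then apply Lemma \ref{mas3} in situation (A2) to push the seed up to that corner, Lemma \ref{ml2} once more to bring the other endpoint into the opposite corner region, and a final instance of Lemma \ref{mas3} (A2) to close up. The resulting configuration has a full extra row attached to the upper side of $A(\sigma)$, that is, an element of $\ms S(\sigma)$.

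Since only a uniformly bounded (in $\beta$ and $L$) number of steps has been performed, summing the errors gives a total deviation bounded, up to a constant, by $\delta_1(\beta) + \delta_3(\beta) + |\Lambda_L|^{1/2}\,\delta_4(\beta)$, which is at most $C_0\,\delta_5(\beta)$ by \eqref{m4}. The torus-boundary regimes ($n=L-2$, $n=L$, $m=L-2$, and so on) are treated identically, with the notion of corner replaced by the appropriate periodic identification and $\ms S(\sigma)$ adjusted to describe the relevant bands. The main obstacle is not probabilistic but combinatorial: at each step one must verify that the current configuration still satisfies the geometric hypotheses of the lemma being applied, and one must uniformly handle the several periodic-boundary variants of $\ms S(\sigma)$. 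No new probabilistic input beyond the three lemmas and the strong Markov property is required.
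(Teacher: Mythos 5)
Your proposal is correct and follows essentially the same route as the paper: escape the valley $\ms V_\sigma$ through $\ms R_2$ via Lemma \ref{ml1}, then alternate Lemma \ref{mas3} and Lemma \ref{ml2} to grow the seed of two adjacent $0$-spins into a complete row, chaining with the strong Markov property and summing the errors, which are controlled by \eqref{m4}. The paper likewise treats only the case $n\le L-3$ in detail and declares the periodic-boundary variants analogous, so no discrepancy there.
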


\begin{proof}
Consider the first case, the proof of the other ones being similar.
By Lemma \ref{ml1}, with a probability close to $1$, the process
$\sigma_t$ escapes from the valley $\ms V_\sigma$ of $\sigma$ by
flipping to $0$ two adjacent spins at the outer boundary of
$A(\sigma)$. By Lemma \ref{mas3}, with a
probability close to $1$, these two adjacent spins will become $2n_0$
adjacent $0$-spins. Of course, if the length of the side is smaller
than $2n_0$, this simply means that the $0$-spins fill the side.

Denote by $R_e$ the $(2n_0 \times 1)$-rectangle of adjacent $0$-spins.
At this point, if $R_e$ is at distance less than $2n_0$ of one of the
corners of $A(\sigma)$, we apply Lemma \ref{mas3}
again to extend it up to the corner. After this step, or if $R_e$ is
at distance greater than $2n_0$ of one of the corners of $A(\sigma)$,
we apply Lemma \ref{ml2} to increase $R_e$ up to the point that one of
its extremities is at a distance less than $2n_0$ of one of the
corners of $A(\sigma)$. We fill the $2n_0$ sites with $0$-spins by
applying again Lemma \ref{mas3}. We repeat the
procedure applying  Lemma \ref{ml2} to reach a position close to the
corner and then Lemma \ref{mas3} to fill the gap.

The probability that something goes wrong in the way is bounded by the
sum of the probabilities that each step goes wrong. This is given by
$C_0 \{\delta_3 (\beta) + 6n_0 \delta_1 (\beta) + 2 |\Lambda_L|^{1/2}
\delta_4 (\beta)\}$, which completes the proof of the proposition in
view of \eqref{m4}.
\end{proof}

\begin{corollary}
\label{ml10} 
Let $\sigma$ be a configuration with $n_0(n_0+1)+2$ $0$-spins which
form a $n_0\times (n_0+1)$-rectangle in a background of $-1$, with two
additional adjacent $0$-spins attached to the longest side of the
rectangle. Then, there exists a constant $C_0$ such that
\begin{equation*}
\bb P_{\sigma} [H_{\zero} = H_{\ms M}] \;\ge\; 1 \;-\; C_0 \, 
|\Lambda_L|^{1/2}\, \delta_5(\beta)
\end{equation*}
for all $\beta\ge C_0$.
\end{corollary}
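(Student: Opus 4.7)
The strategy is to chain together one application of Lemma \ref{mas3} to complete the initial attached row, followed by iterated applications of Proposition \ref{ml11} to grow the resulting supercritical droplet until it fills the torus. Since the configuration $\sigma$ has two adjacent $0$-spins attached to the longest side (of length $n_0+1$) of the rectangle, Lemma \ref{mas3} (case A1) shows that, with probability at least $1-C_0\delta_1(\beta)$, these two spins extend along the side until the row fills completely, because the bound $n_0+1 \le 2n_0+2$ ensures that the $n_0$-extension on each side guaranteed by that lemma is sufficient to reach both corners. The resulting configuration $\sigma_1$ has $(n_0+1)\times(n_0+1)$ $0$-spins forming a square in a sea of $-1$-spins, which is a bona fide supercritical droplet since $n_0+1 > n_0$.

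From $\sigma_1$ onward, I would apply Proposition \ref{ml11} iteratively, invoking the strong Markov property at each step. At each iteration, with probability at least $1-C_0\delta_5(\beta)$, the droplet reaches the next stable set $\ms S(\cdot)$, whose elements are all strictly larger rectangles (or bands), so the path passes successively through: pure rectangular growth while both dimensions are $\le L-3$; transition to band configurations when one dimension first reaches $L-2$ or $L$; and finally absorption into $\zero$ when $m=L-2$ and $n=L$. Every configuration along this sequence has $0$-spins in a background of $-1$-spins, hence none belongs to $\ms M\setminus\{\zero\}$, so on the event that every step succeeds we have $H_\zero = H_{\ms M}$. The number of successive applications of Proposition \ref{ml11} needed to reach $\zero$ is bounded by $2L$, since each step increases one of the two dimensions by one.

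Combining Step~1 with at most $2L$ applications of Proposition \ref{ml11} via the strong Markov property and Bernoulli's inequality, the failure probability is bounded by
\[
C_0\, \delta_1(\beta) \,+\, 2L\cdot C_0\,\delta_5(\beta) \;\le\; C_0'\, |\Lambda_L|^{1/2}\, \delta_5(\beta),
\]
where we used $L = |\Lambda_L|^{1/2}$ and, for $\beta$ large enough, $\delta_1(\beta) \le \delta_5(\beta)$ (the latter follows from the hypothesis $|\Lambda_L|\, e^{-2\beta}\to 0$ together with $0<h<1$).

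The main obstacle I anticipate is a careful bookkeeping of the transition cases in Proposition \ref{ml11}, in particular the passage from a rectangle to a band when one dimension first reaches $L$, and then the final passage from an $(L-2)\times L$ band to $\zero$: in each of these regimes one must verify that $\ms S(\cdot)$ indeed consists only of strictly growing configurations, so that the iteration terminates deterministically in $\zero$ and never drifts back toward $\moins$ or out to $\plus$.
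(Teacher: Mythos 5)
Your proposal is correct and follows essentially the same route as the paper: one application of Lemma \ref{mas3} to complete the attached row into an $(n_0+1)\times(n_0+1)$ square, followed by at most $2|\Lambda_L|^{1/2}$ applications of Proposition \ref{ml11} via the strong Markov property, with the error probabilities summed and absorbed using $\delta_1(\beta)\le C_0\,\delta_5(\beta)$. Your explicit check that no intermediate configuration lies in $\ms M\setminus\{\zero\}$ and your attention to the rectangle-to-band transitions are details the paper leaves implicit, but the argument is the same.
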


\begin{proof}
Denote by $\{T_j : j\ge 1\}$ the jump times of the process
$\sigma_t$. Let $\sigma_+$ be the configuration obtained from $\sigma$
by flipping to $0$ all $-1$-spins in the same row or column of the two
adjacent $0$-spins. Hence, $\sigma_+$ has $(n_0+1)^2$ $0$-spins in a
background of $-1$ spins. According to Lemma \ref{mas3},
$\bb P_{\sigma} [ \sigma(T_{n_0-1}) = \sigma_+] \ge 1 - C_0\,
\delta_1(\beta)$. Since $T_{n_0-1} \le H_{\ms M}$, we may apply the
strong Markov property to conclude that
\begin{equation*}
\bb P_{\sigma} [H_{\plus} = H_{\ms M}] \;\ge\; \bb P_{\sigma_+}
[H_{\plus} = H_{\ms M}] \,-\,  C_0\, \delta_1(\beta) \;.
\end{equation*}
At this point, apply Proposition \ref{ml11} $(2 |\Lambda_L|^{1/2})$
times to complete the proof.
\end{proof}

\begin{proof}[Proof of Proposition \ref{ml7}]
We prove the first statement, the argument for the
other ones being analogous. Fix $\sigma\in \mf R^{lc}$. Recall that we
denote by $T_1$ the time of the first jump. By the strong Markov
property at time $T_1$ and by Assertion \ref{mas1},
\begin{equation*}
\begin{aligned}
& \bb P_{\sigma} [H_{\bf -1} = H_{\ms M}] \;=\; 
\bb E_{\sigma} \big[ \, \bb P_{\sigma_{T_1}} [H_{\bf -1} = H_{\ms M}]
\, \big] \\
&\quad =\; \frac 12\, \big\{ \bb P_{\sigma_+} [H_{\bf -1} = H_{\ms M}]
\;+\;  \bb P_{\sigma_-} [H_{\bf -1} = H_{\ms M}] \, \big\} \;+\; R_\beta\;,  
\end{aligned}
\end{equation*}
where $R_\beta$ is a remainder whose absolute values is bounded by $
C_0\, \delta_1 (\beta)$, and $\s_-$, resp. $\s_+$, is the
configuration obtained from $\s$ by flipping to $-1$ the attached
$0$-spin, resp. by flipping to $0$ the unique $-1$ spin with two
$0$-spins as neighbors.

The configuration $\s_-$ has $n_0(n_0+1)$ $0$-spins which form a
$n_0\times (n_0+1)$-rectangle in a background of $-1$. Hence, by
Corollary \ref{ml9},
$\bb P_{\sigma_-} [H_{\bf -1} = H_{\ms M}] \ge 1 - C_0\, \delta_2
(\beta)$.
On the other hand, by Corollary \ref{ml10},
$\bb P_{\sigma_+} [H_{\bf +1} = H_{\ms M}] \ge 1 - C_0\,
|\Lambda_L|^{1/2}\, \delta_5 (\beta)$.
The first statement of the proposition follows from these estimates
and from the fact that $\delta_2(\beta) < \delta_5(\beta)$, because
$4-n_0h>2$, and $\delta_3(\beta) < C_0\, \delta_5(\beta)$ for $\beta$
large enough by \eqref{m4}.
\end{proof}

\section{Proof of Theorems \ref{mt22} and \ref{mainprop}}
\label{sec3}

The proofs of Theorems \ref{mainprop} and \ref{mt22} are based on
Propositions \ref{Lemma1} and \ref{ml7}.  By Proposition \ref{ml7},
there exists a finite constant $C_0$ such that
\begin{equation}
\label{m5}
\begin{aligned}
& \max_{\sigma\in \mf R^a}\, 
\bb P_{\sigma} [H_{\bf +1}  < H_{\{{\bf -1} ,\bf{0}\}}]  \;\le \;
C_0\, \delta (\beta)\;, \\
&\quad \max_{\sigma\in \mf R^l}\, 
\bb P_ {\sigma} [H_{\bf -1} < H_{\{{\bf 0},{\bf +1}\}} ]  \;\le \;
(1/2) \;+\;  C_0\, \delta (\beta)
\end{aligned}
\end{equation}
for all $\beta\ge C_0$, where $\delta(\beta)$ has been introduced in
\eqref{m3}. 

\begin{proof}[Proof of Theorem \ref{mainprop}]
We prove the first statement of the theorem, the argument for the
second one being identical.
Recall the definition of the boundary $\mf B^+$ of the valley of
$\moins$ introduced in \eqref{eq09}. By \eqref{ff02} and by the strong
Markov property at time $H_{\mf B^+}$,
\begin{equation*}
\bb P_{\moins} \big[ H_{\plus} < H_\zero \big] \;=\;
\bb E_{\moins} \big[ \, \bb P_{\sigma (H_{\mf B^+})} \big[ H_{\plus} < H_\zero \big]\,
\big]\; .
\end{equation*}
Let $q(\sigma) = \bb P_ {\bf -1} [H_{\sigma} = H_{\mf B^+} ]$,
$\sigma\in\mf B^+$. By Proposition \ref{Lemma1}, the previous
expectation is equal to
\begin{equation*}
\sum_{\sigma\in\mf B^+}
q(\sigma) \, \bb P_ {\sigma} [H_{\bf +1} < H_{\bf 0} ]
\;\le \; \sum_{\sigma\in\mf R^a}
q(\sigma) \, \bb P_ {\sigma} [H_{\bf +1} < H_{\bf 0} ] \;+\; 
C_0\, \epsilon(\beta) \;,
\end{equation*}
where $\epsilon(\beta)$ has been introduced in \eqref{m6}.

By the first estimate in \eqref{m5}, uniformly in $\sigma\in\mf R^a$, 
\begin{align*}
\bb P_ {\sigma} [H_{\bf +1} < H_{\bf 0} ] \; &\le\;
\bb P_ {\sigma} [H_{\bf +1} < H_{\bf 0} \,,\,
H_{\{{\bf -1},{\bf 0}\}} < H_{\bf +1} ]
\;+\; C_0\, \delta(\beta) \\
&=\; \bb P_ {\sigma} [ H_{\bf -1} < H_{\bf +1} < H_{\bf 0}]
\;+\; C_0\, \delta(\beta)\;.
\end{align*}
Therefore, by the strong Markov property at time $H_{\bf -1}$, 
\begin{equation*}
\bb P_ {\bf -1} [H_{\bf +1} < H_{\bf 0} ]\;\le\;
\bb P_ {\bf -1} [H_{\bf +1} < H_{\bf 0} ] 
\sum_{\sigma\in\mf R^a} q(\sigma) \, \bb P_ {\sigma} [H_{\bf -1} <
H_{\{{\bf 0},{\bf +1}\}} ]\, \;+\; C_0 \, \delta (\beta)  \;.
\end{equation*}
because $\epsilon(\beta) \le \delta (\beta)$.  

By the second bound in \eqref{m5}, as $\delta (\beta) \to 0$, for
$\sigma\in \mf R^l$, $\bb P_ {\sigma} [H_{\bf -1} < H_{\{{\bf 0},{\bf
    +1}\}} ] \le 2/3$ provided $\beta \ge C_0$. Hence, for $\beta$
large enough,
\begin{align*}
\bb P_ {\bf -1} [H_{\bf +1} < H_{\bf 0} ]\; & \le\;
(2/3)\, \bb P_ {\bf -1} [H_{\bf +1} < H_{\bf 0} ] 
\sum_{\sigma\in\mf R^a} q(\sigma) \;+\; C_0 \, \delta (\beta)  \\
& \le\; (2/3)\, \bb P_ {\bf -1} [H_{\bf +1} < H_{\bf 0} ] 
\;+\; C_0 \, \delta (\beta) \;.
\end{align*}
This completes the proof of the theorem since $\delta (\beta) \to 0$.
\end{proof}

\begin{proof}[Proof of Theorem \ref{mt22}]
Since the chains hits $\mf B^+$ before reaching $\bf 0$ and $\mf R^l$,
by the strong Markov property,
\begin{equation*}
\bb P_{\bf -1} [ H_{\mf R^l} < H_{\bf 0}] \;=\;
\sum_{\sigma\in\mf B^+} \bb P_{\bf -1} [ H_{\sigma} = H_{\mf B^+} ] \,
\bb P_{\sigma} [ H_{\mf R^l} < H_{\bf 0}]\;. 
\end{equation*}

Recall the definition of $q(\sigma)$ introduced in the previous proof.
By Proposition \ref{Lemma1}, this expression is equal to
\begin{equation}
\label{m7}
\sum_{\sigma\in\mf R^l} q(\sigma) \;+\;
\sum_{\sigma\in\mf R^s} q(\sigma) \, \bb P_{\sigma} [ H_{\mf R^l} <
H_{\bf 0}] \;+\; R (\beta) \;. 
\end{equation} 
where the absolute value of the remainder $R (\beta)$ is bounded by
$C_0\, \epsilon (\beta)$.  By Assertion \ref{mas1}, Lemma \ref{mas3}
and by the proof Lemma \ref{ml8}, uniformly in $\sigma\in\mf R^s$,
$\sigma'\in\mf R$
\begin{align*}
& \bb P_{\sigma} [ H_{\mf R} < H_{\mf R^l
  \cup\{{\bf -1}, {\bf 0}\}}]\;\ge \; 1 \,-\, C_0\, [\delta_1(\beta) \,+\,
\delta_2(\beta)]  \;, \\
&\quad \quad \bb P_{\sigma'} [ H_{\bf -1} < H_{\mf R^l
  \cup\{ {\bf 0}\}}]\;=\; 1 \,-\, C_0\, \delta_2(\beta) \;. 
\end{align*} 
Hence, uniformly in $\sigma\in\mf R^s$,
\begin{equation}
\label{m8}
\bb P_{\sigma} [ H_{\bf -1} < H_{\mf R^l 
\cup\{ {\bf 0}\}}]\;\ge \; 1 \,-\, C_0\, 
[\delta_1(\beta) \,+\, \delta_2(\beta)]  \;, 
\end{equation} 
and we may introduce the set $\{H_{\bf -1} < H_{\mf R^l \cup\{ {\bf
    0}\}}\}$ inside the probability appearing in \eqref{m7} by paying
a cost bounded by $C_0 [\delta_1(\beta) \,+\, \delta_2(\beta)]$.

Up to this point, we proved that
\begin{equation*}
\bb P_{\bf -1} [ H_{\mf R^l} < H_{\bf 0}] \;=\; 
\sum_{\sigma\in\mf R^l} q(\sigma) \;+\;
\sum_{\sigma\in\mf R^s} q(\sigma) \, \bb P_{\sigma} [ H_{\bf -1} <  H_{\mf R^l} <
H_{\bf 0}] \;+\; R (\beta) \;,
\end{equation*}
where the absolute value of the remainder $R (\beta)$ is bounded by
$C_0\, [\epsilon (\beta) \,+\, \delta_1(\beta) \,+\, \delta_2(\beta)]$.
By the strong Markov property this expression is equal to
\begin{equation*}
\sum_{\sigma\in\mf R^l} q(\sigma) \;+\; \bb P_{\bf -1} [ H_{\mf R^l} <
H_{\bf 0}]\, \sum_{\sigma\in\mf R^s} q(\sigma) \, \bb P_{\sigma} [
H_{\bf -1} <  H_{\mf R^l \cup \{\bf 0\}}] \;+\; R (\beta) \;.
\end{equation*}
By \eqref{m8}, this expression is equal to
\begin{equation*}
\sum_{\sigma\in\mf R^l} q(\sigma) \;+\; \bb P_{\bf -1} [ H_{\mf R^l} <
H_{\bf 0}]\, \sum_{\sigma\in\mf R^s} q(\sigma) \;+\; R (\beta) \;,
\end{equation*}
where the value of $R (\beta)$ has changed but not its
bound. Therefore, 
\begin{equation*}
\Big(\, 1\,-\, \sum_{\sigma\in\mf R^s} q(\sigma) \,\Big) \,
\bb P_{\bf -1} [ H_{\mf R^l} < H_{\bf 0}] \;=\; 
\sum_{\sigma\in\mf R^l} q(\sigma) \;+\; R (\beta) \;.
\end{equation*}

Since, by Proposition \ref{Lemma1},
\begin{equation*}
\sum_{\sigma \in \mf R^l \cup \mf R^s} q(\sigma) \;=\;  \bb P_{\bf -1} [
H_{\mf R^a} = H_{\mf B^+}] \;\ge \; 1\,-\, \epsilon(\beta)\;,
\end{equation*}
replacing on the right-hand side $\sum_{\sigma\in\mf R^l} q(\sigma)$
by $ 1 - \sum_{\sigma\in\mf R^s} q(\sigma) - R'(\beta)$, where the
absolute value of $R'(\beta)$ is bounded by $\epsilon(\beta)$, we
conclude that
\begin{equation*}
\bb P_{\bf -1} [ H_{\mf R^l} < H_{\bf 0}] \;=\; 
1 \;+\; R (\beta) \;,
\end{equation*}
as claimed.
\end{proof}

\section{The convergence of the trace process}
\label{sec5}

In this section, we examine the evolution of the trace of $\sigma_t$
on $\ms M = \{{\bf -1}, {\bf 0}, {\bf +1}\}$ under the hypotheses of
Theorem \ref{mainprop}.  Denote by $\eta_t$ the trace of $\sigma_t$ on
$\ms M$. We refer to Section \ref{sec2} for a precise definition. By
\cite[Proposition 6.1]{bl2}, $\eta_t$ is an $\ms M$-valued,
continuous-time Markov chain. Recall the definition of $\theta_\beta$
given in \eqref{71}.

\begin{proposition}
\label{p71}
As $\beta\uparrow\infty$, the speeded-up Markov chain $\eta(\theta_\beta t)$
converges to the continuous-time Markov chain on $\ms M$ in which $\bf
+1$ is an absorbing state, and whose jump rates $\mb r(\eta, \xi)$, are
given by
\begin{equation*}
\mb r({\bf -1}, {\bf 0}) \;=\; \mb  r({\bf 0},{\bf +1})  \;=\; 1 \;,
\quad \mb r({\bf -1},{\bf +1}) \;=\; \mb r({\bf 0},{\bf -1})\;=\;
0\;. 
\end{equation*}
\end{proposition}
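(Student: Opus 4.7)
The plan is to invoke the Beltrán--Landim scheme: show that all jump rates of the speeded-up trace chain $\eta(\theta_\beta t)$ on $\ms M$ converge to those of the limit chain, which on the finite state space $\ms M$ is equivalent to the convergence of the finite-dimensional distributions. The trace chain has jump rates $r_\beta(x, y) = \lambda_\beta(x)\, \bb P_x[\sigma(H^+_{\ms M}) = y]$, $x \ne y \in \ms M$. Combining the capacity identity $\capa(x, \ms M \setminus \{x\}) = \mu_\beta(x)\, \lambda_\beta(x)\, \bb P_x[H_{\ms M \setminus \{x\}} < H^+_x]$ with the strong Markov decomposition $\bb P_x[\sigma(H^+_{\ms M}) = y] = \bb P_x[H_y < H_z]\, \bb P_x[H_{\ms M \setminus \{x\}} < H^+_x]$, where $\{x, y, z\} = \ms M$, yields the key identity
\begin{equation*}
\theta_\beta\, r_\beta(x, y) \;=\; \theta_\beta\, \frac{\capa(x, \ms M \setminus \{x\})}{\mu_\beta(x)}\, \bb P_x[H_y < H_z]\;.
\end{equation*}

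For $x = \moins$, the prefactor equals exactly $1$ by the definition \eqref{71} of $\theta_\beta$, so Theorem \ref{mainprop} immediately gives $\theta_\beta\, r_\beta(\moins, \zero) = \bb P_\moins[H_\zero < H_\plus] \to 1$ and $\theta_\beta\, r_\beta(\moins, \plus) \to 0$. For $x = \zero$, Theorem \ref{mainprop} yields $\bb P_\zero[H_\plus < H_\moins] \to 1$ and $\bb P_\zero[H_\moins < H_\plus] \to 0$; it therefore suffices to show that the prefactor tends to $1$, i.e.
\begin{equation*}
\frac{\capa(\zero, \{\moins, \plus\})/\mu_\beta(\zero)}
{\capa(\moins, \{\zero, \plus\})/\mu_\beta(\moins)} \;\longrightarrow\; 1\;.
\end{equation*}
This reduces to proving the analog of Proposition \ref{mt3} around $\zero$, namely $\capa(\zero, \{\moins, \plus\}) = [1 + o_\beta(1)]\, \frac{4(2n_0+1)}{3}\, |\Lambda_L|\, \mu_\beta(\xi)$ for $\xi \in \mf R^l_0$. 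The crucial observation is that every one-spin-flip rate appearing in the Dirichlet and Thomson estimates of Sections \ref{energy} and \ref{proofs1} depends only on the local number of disagreeing neighbors and on $h$, and hence is invariant under the substitution $(-1,0) \mapsto (0,+1)$. Consequently the whole droplet construction carries over verbatim, and since $\bb H(\xi) - \bb H(\zero) = a = \bb H(\eta) - \bb H(\moins)$ for $\eta \in \mf R^l$, the two ratios $\capa(\cdot)/\mu_\beta(\cdot)$ coincide to leading order and cancel.

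For $x = \plus$ one shows $\theta_\beta\, r_\beta(\plus, y) \to 0$, corresponding to $\plus$ being absorbing in the limit. Using $\capa(A,B) = \capa(B,A)$ and the monotonicity of capacities in the second argument,
\begin{equation*}
\capa(\plus, \{\moins, \zero\}) \le \capa(\plus, \zero) = \capa(\zero, \plus) \le \capa(\zero, \{\moins, \plus\})\;.
\end{equation*}
Combining this with the previous asymptotic and with $\mu_\beta(\xi)/\mu_\beta(\plus) = e^{-\beta(a + h|\Lambda_L|)}$ (since $\bb H(\plus) = -h|\Lambda_L|$ and $\bb H(\xi) = a$), one obtains $\theta_\beta\, \capa(\plus, \{\moins, \zero\})/\mu_\beta(\plus) \le C\, e^{-h|\Lambda_L|\beta}$ by \eqref{2-1}, which vanishes as $\beta \to \infty$. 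Once all rate convergences are established, the standard fact that on a finite state space convergence of jump rates implies convergence of finite-dimensional distributions completes the proof.

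The main obstacle is the derivation of the asymptotic $\capa(\zero, \{\moins, \plus\}) \sim \frac{4(2n_0+1)}{3}\, |\Lambda_L|\, \mu_\beta(\xi)$: no new conceptual ingredient is required, but the flows and test functions built in Proposition \ref{Lemma1} and in the forthcoming proof of Proposition \ref{mt3} must be re-expressed with $0$-spins and $+1$-spins playing the roles formerly held by $-1$-spins and $0$-spins. Once this adaptation is in place, the proof reduces to the elementary manipulations outlined above.
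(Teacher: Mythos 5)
Your overall strategy is the one the paper follows: express the speeded-up trace rates as $\theta_\beta\,\capa(x,\ms M\setminus\{x\})\,\bb P_x[H_y<H_z]/\mu_\beta(x)$ via \eqref{probastar} and \eqref{defcapa}, note that the prefactor equals $1$ identically for $x=\moins$ by the definition \eqref{71} of $\theta_\beta$, show it tends to $1$ for $x=\zero$ through the analogue of Proposition \ref{mt3} at $\zero$ (this is precisely the paper's Corollary \ref{as71b}, resting on the second identity of Proposition \ref{mt3} and the cancellation $\bb H(\xi)-\bb H(\zero)=\bb H(\eta)-\bb H(\moins)$), show it tends to $0$ for $x=\plus$, and feed in Theorem \ref{mainprop} for the hitting probabilities. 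One caveat on the $\zero$ case: the substitution $(-1,0)\mapsto(0,+1)$ is not an exact symmetry of the model, since the residual third spin value enters differently (a $+1$ in a sea of $-1$'s costs $16-h$, a $-1$ in a sea of $0$'s costs $4+h$); both excitations are negligible in the relevant estimates, and the paper itself only asserts that the second identity of Proposition \ref{mt3} is ``analogous'', so your level of detail matches the paper's here.

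There is, however, one step that fails as written: the chain $\capa(\plus,\{\moins,\zero\})\le\capa(\plus,\zero)=\capa(\zero,\plus)\le\capa(\zero,\{\moins,\plus\})$. Monotonicity of the capacity in the second argument goes the \emph{opposite} way: enlarging $B$ shrinks the admissible set in the Dirichlet principle \eqref{dirichlet}, so $\capa(\plus,\zero)\le\capa(\plus,\{\moins,\zero\})$, and your first inequality is reversed. The repair is the subadditivity bound $\capa(\plus,\{\moins,\zero\})\le\capa(\plus,\zero)+\capa(\plus,\moins)$, which is exactly what the paper invokes in Lemma \ref{500} (equation (3.5) of \cite{ll}); each summand is then dominated by $\capa(\zero,\{\moins,\plus\})$ and $\capa(\moins,\{\zero,\plus\})$ respectively, and after multiplying by $\theta_\beta/\mu_\beta(\plus)$ one gets a bound of order $\mu_\beta(\zero)/\mu_\beta(\plus)+\mu_\beta(\moins)/\mu_\beta(\plus)$, both terms vanishing as $\beta\to\infty$. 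With that substitution your computation (including the $e^{-h|\Lambda_L|\beta}$ estimate) and the concluding appeal to the finiteness of the state space go through, and the proof coincides with the paper's.
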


The proof of this proposition, presented at the end of this section,
relies on estimation of capacities. We start characterizing the
distribution of $\sigma(H_{\mf B^+})$ when the process starts from
$\moins$.  Recall the definition of $\delta_2(\beta)$ introduced just
before Lemma \ref{ml8}.

\begin{lemma}
\label{as61}
There exists a finite constant $C_0$ such that
for every $\sigma\in \mf R^a$, 
\begin{equation*}
\Big|\, |\mf R^a|\, \bb P_ {\bf -1}
[H_{\sigma} = H_{\mf B^+} ] \,-\, 1\, \Big| 
\;\le \; C_0\, [\, \epsilon(\beta) + \delta_1(\beta)\,]
\end{equation*}
for all $\beta\ge C_0$.
\end{lemma}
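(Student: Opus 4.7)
My plan is to couple with the reflected process $\eta_t$ on $\ms V_{\moins}$ introduced in the proof of Proposition~\ref{Lemma1}, so that $\bb P_{\moins}[H_\sigma = H_{\mf B^+}] = \bb P^{\ms V}_{\moins}[H_\sigma = H_{\mf B^+}]$, and then to analyse the trace of $\eta_t$ on $A = \{\moins\} \cup \mf B^+$. By the Markov property, the first-hit distribution on $\mf B^+$ equals the trace jumping distribution $r_A(\moins, \sigma)/\lambda_A(\moins)$, and reversibility yields $r_A(\moins, \sigma) = [\mu(\sigma)/\mu(\moins)]\, r_A(\sigma, \moins)$. Since all $\sigma \in \mf R^a$ share the common energy $\Gamma_c$, the prefactor $\mu(\sigma)/\mu(\moins)$ is constant on $\mf R^a$, and the proof reduces to showing (i) $r_A(\sigma, \moins) = 1 + O(\delta_1 + \epsilon)$ uniformly on $\mf R^a$, and (ii) the contribution of $\mf B^+ \setminus \mf R^a$ to $\lambda_A(\moins)$ is $O(\epsilon)$ relative to that of $\mf R^a$.

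For (i) I write $r_A(\sigma, \moins) = \lambda^{\ms V}(\sigma)\, \bb P^{\ms V}_\sigma[H_A^+ = H_{\moins}]$ and exploit the following cancellation. By Assertion~\ref{mas1}, resp.~Assertion~\ref{mas2}, a corner $\sigma \in \mf R^c$ has two rate-$1$ neighbours ($\sigma_-$ and $\sigma_+$) and an interior $\sigma \in \mf R^i$ has three ($\sigma_-$ and $\sigma_+^{\pm}$); but every ``$\sigma_+$''-type configuration has $n_0(n_0+1)+2$ non-$(-1)$ spins and hence lies outside $\ms V_{\moins}$, so in the reflected dynamics these jumps are forbidden. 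The unique remaining rate-$1$ jump is therefore the one to $\sigma_-$, and a direct enumeration of the other (exponentially small) allowed rates gives $\lambda^{\ms V}(\sigma) = 1 + O(\delta_1)$ uniformly over $\mf R^a$. For the return probability, with probability $1 - O(\delta_1)$ the first jump is to $\sigma_- \in \mf R$, and iterating Lemma~\ref{ml8} as in the proof of Corollary~\ref{ml9} shows $\bb P^{\ms V}_{\sigma_-}[H_{\moins} < H_{\mf R^a}] \ge 1 - O(\delta_2)$; since $\delta_2 \le C_0\,\epsilon$ under~\eqref{8-3} (because $n_0 h \le 2$), this proves~(i).

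For (ii) I decompose $\mf B^+ \setminus \mf R^a = (\mf B \setminus \mf R) \cup (\mf R^+ \setminus \mf R^a)$. For $\xi \in \mf B \setminus \mf R$ with $k$ connected components, Assertion~\ref{compo} gives $\bb H(\xi) \ge \Gamma_c + 2(k-1) + h$, the number of such $\xi$ is $O(|\Lambda_L|^k)$ and $\lambda^{\ms V}(\xi) = O(1)$, so summing over $k \ge 1$ with the geometric factor $|\Lambda_L| e^{-2\beta} \le 1/2$ from~\eqref{8-3} shows the contribution of $\mf B \setminus \mf R$ to $\lambda_A(\moins)$ is $O(e^{-h\beta})$ times the one coming from $\mf R^a$. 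For $\xi \in \mf R^+ \setminus \mf R^a$, \eqref{eq08} and the count $|\mf R^+ \setminus \mf R^a| = O(|\Lambda_L|^2)$ yield a relative contribution $O(|\Lambda_L| e^{-2\beta})$. Both are $O(\epsilon)$, establishing~(ii), and combining~(i) with~(ii) gives $|\mf R^a|\, \bb P_{\moins}[H_\sigma = H_{\mf B^+}] = 1 + O(\epsilon + \delta_1)$.

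The main obstacle is the cancellation underlying~(i): in the full dynamics the rate-$1$ neighbourhood of $\sigma \in \mf R^a$ has different size for corner ($2$ neighbours) and interior ($3$ neighbours) attachments, so the holding rates $\lambda(\sigma)$ differ substantially. The precise fact that every extra rate-$1$ neighbour $\sigma_+$ produces a supercritical droplet of $n_0(n_0+1)+2$ non-$(-1)$ spins, and therefore exits $\ms V_{\moins}$, forces the reflected holding rate to equal $1 + O(\delta_1)$ in every case; it is this geometric coincidence that makes the first-hit distribution on $\mf B^+$ uniform on $\mf R^a$.
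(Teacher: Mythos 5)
Your proof is correct, and at its core it is the same argument as the paper's: both rest on reversing the first--hit distribution, i.e.\ writing $\bb P_\moins[H_\sigma = H_{\mf B^+}]$ as $\mu_\beta(\sigma)\,r_A(\sigma,\moins)$ divided by a normalization (the paper phrases this via \eqref{probastar} and $\capa(\moins,\mf B^+)$, which is literally $\mu_\beta(\moins)\lambda_A(\moins)$), and then showing that $\lambda(\sigma)\,\bb P_\sigma[H_\moins = H^+_{\mf B^+\cup\{\moins\}}]$ is $1+O(\delta_1+\delta_2)$ uniformly over $\mf R^a$. Where you genuinely differ is in how the two factors are estimated. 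The paper works with the full dynamics and obtains the cancellation as $(2+O(\delta_1))\cdot(1/2+O(\cdots))$ for corner attachments versus $(3+O(\delta_1))\cdot(1/3+O(\cdots))$ for interior ones; your observation that every rate-one neighbour $\sigma_+$ has $n_0(n_0+1)+2$ non-$(-1)$ spins and therefore lies outside $\ms V_\moins$, so that the \emph{reflected} holding rate is $1+O(\delta_1)$ and the reflected return probability is $1-O(\delta_1+\delta_2)$ in \emph{both} cases, makes the uniformity transparent rather than a numerical coincidence --- and the two computations are consistent, since a full-dynamics excursion that first jumps to $\sigma_+$ cannot return to $\moins$ without re-entering the valley through $\mf B^+$ and hence never contributes to $\{H^+_A=H_\moins\}$. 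The second difference is the normalization: the paper sums its identity over $\mf R^a$ and invokes Proposition \ref{Lemma1} to identify $|\mf R^a|\,\mu_\beta(\sigma^*)/\capa(\moins,\mf B^+)=1+O(\epsilon+\delta_1)$, whereas you bound the contribution of $\mf B^+\setminus\mf R^a$ to the denominator directly from Assertion \ref{compo} and \eqref{eq08}; this is self-contained but essentially re-derives the energy--entropy estimates of Lemma \ref{fl1}, so it buys independence from Proposition \ref{Lemma1} at the cost of repeating its combinatorics (including the reduction to configurations without $+1$ spins via Assertion \ref{fasB}, which you should state explicitly when counting $\mf B\setminus\mf R$).
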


\begin{proof}
Fix a reference configuration $\sigma^*$ in
$\mf R^a$. By \eqref{probastar} and by definition of the
capacity, 
\begin{equation*}
\bb P_ {\bf -1} [H_{\sigma} = H_{\mf B^+} ] \;=\; \frac{M({\bf -1}) 
\, \bb P_ {\bf -1} [H_{\sigma} = H^+_{\mf B^+ \cup\{{\bf -1}\}} ]}
{\capa ({\bf -1}, \mf B^+)}\;\cdot
\end{equation*}
By reversibility, the numerator of this expression is equal to
\begin{equation*}
M(\sigma) \, \bb P_ {\sigma} [H_{\bf -1} = H^+_{\mf B^+ \cup\{{\bf -1}\}} ]
\;=\; \mu_\beta(\sigma) \, \lambda_\beta(\sigma) \, 
\bb P_ {\sigma} [H_{\bf -1} = H^+_{\mf B^+ \cup\{{\bf -1}\}} ]\;.
\end{equation*}
By Assertions \ref{mas1} and \ref{mas2}, with a probability close to
$1$, either a negative spin next to the attached $0$-spin flips to $0$
or the attached $0$-spin flips to $-1$. In the first case, as the
process left the valley $\ms V_\moins$ introduced at the beginning of
Section \ref{energy}, it will hit $\mf B^+$ before reaching $\moins$. In
the second case, applying Lemma \ref{ml8} repeatedly yields that the
process reaches $\moins$ before hitting $\mf B^+$. Hence, by these
three results,
\begin{equation*}
\big|\, \bb P_ {\sigma} [H_{\bf -1} = H^+_{\mf B^+ \cup\{{\bf -1}\}} ]
\,-\, \mf n (\sigma) \,\big| \;\le\; C_0 \, [\, \delta_1(\beta) \,+\,
\delta_2(\beta)\,]\;,
\end{equation*}
where 
\begin{equation*}
\mf n(\sigma) \;=\;
\begin{cases}
1/2  & \text{ if $\sigma\in\mf R^{c}$,} \\
1/3 & \text{ if $\sigma\in\mf R^{i}$.}
\end{cases}
\end{equation*}
Since
\begin{equation*}
\lambda(\sigma) \;=\;
\begin{cases}
2 + \delta_1(\beta) & \text{ if $\sigma\in\mf R^{c}$,} \\
3 + \delta_1(\beta) & \text{ if $\sigma\in\mf R^{i}$,}
\end{cases}
\end{equation*}
and since $\mu_\beta(\sigma) = \mu_\beta(\sigma^*)$, we conclude that
\begin{equation*}
\bb P_ {\bf -1} [H_{\sigma} = H_{\mf B^+} ] \;=\; 
\frac{\mu_\beta(\sigma^*)}
{\capa ({\bf -1}, \mf B^+)} \, \big( 1 + R_\beta \big)\;,
\end{equation*}
where the absolute value of $R_\beta$ is bounded by
$C_0 [\delta_1(\beta) + \delta_2(\beta)]$.  Summing over
$\sigma\in\mf R^a$, it follows from Proposition \ref{Lemma1} that for
any configuration $\sigma^*\in\mf R^a$,
\begin{equation*}
\Big|\, \frac {\mu_\beta(\sigma^*)\, |\mf R^a|} {\capa ({\bf -1}, \mf B^+)}
\,-\, 1\,\Big| \;\le\; C_0 \, [\epsilon(\beta) + \delta_1(\beta)] 
\end{equation*}
because $\delta_2(\beta) \le \epsilon(\beta)$. To complete the proof
of the lemma, it remains to multiply both sides of the penultimate
identity by $|\mf R^a|$.
\end{proof}

It follows from the proof of the previous lemma and the identity $|\mf
R^a| = 4(2n_0+1) |\Lambda_L|$ that there exists a finite constant
$C_0$ such that for all $\sigma\in\mf R^a$,
\begin{equation}
\label{410}
\Big|\, \frac {\capa ({\bf -1}, \mf B^+)} {\mu_\beta(\sigma)\, |\Lambda_L|} 
\,-\, 4(2n_0+1) \,\Big| \;\le\; C_0 \, [\epsilon(\beta) + \delta_1(\beta)]
\end{equation}
for all $\beta\ge C_0$.

\begin{proposition}
\label{mt3}
For any configuration $\eta\in\mf R^l$ and any configuration
$\xi\in\mf R^l_0$, 
\begin{equation*}
\lim_{\beta\to\infty} \frac{\capa ({\bf -1},\{{\bf 0}, {\bf +1}\})}
{\mu_\beta (\eta)\, |\Lambda_L|}\;=\;\frac{4(2n_0+1)}3\, \;=\; 
\lim_{\beta\to\infty} \frac{\capa ({\bf 0},\{{\bf -1}, {\bf +1}\})}
{\mu_\beta (\xi)\, |\Lambda_L|}\;.
\end{equation*}
\end{proposition}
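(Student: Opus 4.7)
The plan is to express $\capa(\moins, \{\zero, \plus\})$ in terms of the already-controlled boundary capacity $\capa(\moins, \mf B^+)$ from \eqref{410}, and to show the ratio between the two tends to $1/3$ asymptotically.

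First I will decompose an excursion from $\moins$ to $\{\zero,\plus\}$ through the boundary $\mf B^+$ of the valley $\ms V_\moins$. Starting from $\capa(\moins, \{\zero,\plus\}) = \mu_\beta(\moins)\lambda_\beta(\moins)\bb P_\moins[H_{\{\zero,\plus\}} < H^+_\moins]$ and using that every path from $\moins$ to $\{\zero,\plus\}$ must cross $\mf B^+$, the strong Markov property at $H_{\mf B^+}$ together with the renewal identity
\begin{equation*}
\bb P_\moins\big[\sigma(H_{\mf B^+})=\sigma,\, H_{\mf B^+} < H^+_\moins\big] \;=\; \bb P_\moins[H_{\mf B^+} < H^+_\moins]\, \bb P_\moins\big[\sigma(H_{\mf B^+})=\sigma\big]
\end{equation*}
(which I will establish by splitting on the complementary event $\{H^+_\moins < H_{\mf B^+}\}$ and applying the strong Markov property at $H^+_\moins$) will yield
\begin{equation*}
\capa(\moins, \{\zero,\plus\}) \;=\; \capa(\moins, \mf B^+) \sum_{\sigma \in \mf B^+} \bb P_\moins\big[\sigma(H_{\mf B^+})=\sigma\big]\, \bb P_\sigma[H_{\{\zero,\plus\}} < H_\moins]\;.
\end{equation*}

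Next I will evaluate the sum. Proposition \ref{Lemma1} restricts the entry distribution to $\mf R^a$ with error $O(\epsilon(\beta))$; Lemma \ref{as61} shows that $\bb P_\moins[\sigma(H_{\mf B^+}) = \sigma] = (1+o(1))/|\mf R^a|$ uniformly for $\sigma\in\mf R^a$; and Proposition \ref{ml7} gives $\bb P_\sigma[H_{\{\zero,\plus\}}<H_\moins] = 1 - \bb P_\sigma[H_\moins = H_{\ms M}]$, asymptotically equal to $1/2$, $2/3$, and $0$ on $\mf R^{lc}$, $\mf R^{li}$, and $\mf R^s$ respectively. A direct count on the torus --- four corner positions on the long sides and $2(n_0-1)$ interior positions on the long sides per rectangle instance, times two orientations and $|\Lambda_L|$ translations --- gives $|\mf R^{lc}| = 8|\Lambda_L|$, $|\mf R^{li}| = 4(n_0-1)|\Lambda_L|$, and $|\mf R^a| = 4(2n_0+1)|\Lambda_L|$. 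The sum therefore reduces to
\begin{equation*}
\frac{1}{|\mf R^a|}\Big\{\, \tfrac 12 \cdot 8|\Lambda_L| \,+\, \tfrac 23 \cdot 4(n_0-1)|\Lambda_L| \,\Big\} \,+\, o(1) \;=\; \frac{4(2n_0+1)|\Lambda_L|/3}{4(2n_0+1)|\Lambda_L|} \,+\, o(1) \;=\; \frac 13 \,+\, o(1)\;.
\end{equation*}

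Combining these with \eqref{410}, which gives $\capa(\moins, \mf B^+) = [4(2n_0+1)+o(1)]\,\mu_\beta(\eta)|\Lambda_L|$ for any $\eta\in\mf R^l$ (valid since all configurations in $\mf R^a$ share the energy $\Gamma_c$), will yield the first identity. The second identity, for $\capa(\zero,\{\moins,\plus\})$, will follow by the same argument applied to the valley $\ms V_\zero$, using the analogs of Proposition \ref{Lemma1}, Lemma \ref{as61}, Proposition \ref{ml7}, and \eqref{410}, obtained by replacing $-1$-spins with $0$-spins and $0$-spins with $+1$-spins throughout; these analogs hold by the symmetry of the local energy landscape around the critical droplet. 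The main technical obstacle will be verifying the renewal identity cleanly and carrying out the accurate combinatorial count distinguishing corner from interior attachments; the key numerical point is that the weights $1/2$, $2/3$, and $0$ from the three types of critical droplets conspire with the side-length combinatorics of a $n_0\times(n_0+1)$ rectangle to produce exactly $1/3$, independent of $n_0$.
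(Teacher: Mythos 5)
Your proposal follows essentially the same route as the paper: the renewal decomposition of the capacity through the boundary $\mf B^+$ is exactly the paper's identity \eqref{601} (derived there from \eqref{probastar}), the reduction to $\mf R^a$ via Proposition \ref{Lemma1}, the uniform entry distribution from Lemma \ref{as61}, the exit probabilities $1/2$, $2/3$, $0$ from Proposition \ref{ml7}, the normalization by \eqref{410}, and the count $|\mf R^{lc}|/|\mf R^a| = 2/(2n_0+1)$, $|\mf R^{li}|/|\mf R^a| = (n_0-1)/(2n_0+1)$ all coincide with the paper's argument. The computation $\tfrac12\cdot\tfrac{2}{2n_0+1} + \tfrac23\cdot\tfrac{n_0-1}{2n_0+1} = \tfrac13$ and the symmetric treatment of the second identity are likewise as in the paper, so the proposal is correct.
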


\begin{proof}
We prove below the first idendity of the proposition, the one of the
second being analogous. We first claim that
\begin{equation}
\label{601}
\capa ({\bf -1} , \{{\bf 0}, {\bf +1}\}) \;=\;
\capa ({\bf -1} , \mf B^+)
\sum_{\sigma\in\mf B^+} \bb P_{\bf -1} [ H_{\sigma} = H_{\mf B^+} ]  \,
\bb P_{\sigma} [ H_{\{{\bf 0}, {\bf +1}\} }  < H_{\bf -1} ] \;.
\end{equation}
Indeed, since starting from $\bf -1$ the process hits $\mf B^+$ before
$\{{\bf 0}, {\bf +1}\}$, by the strong Markov property we have that
\begin{equation*}
\bb P_{\bf -1} [ H_{\{{\bf 0}, {\bf +1}\} }  < H^+_{\bf -1} ] \;=\;
\sum_{\sigma\in\mf B^+} \bb P_{\bf -1} [ H_{\sigma}
= H^+_{\mf B^+ \cup \{\bf -1\}} ]  \,
\bb P_{\sigma} [ H_{\{{\bf 0}, {\bf +1}\} }  < H_{\bf -1} ] \;.
\end{equation*}
By \eqref{probastar}, we may rewrite the previous expression as
\begin{equation*}
\bb P_{\bf -1} [ H_{\mf B^+} < H^+_{\bf -1} ]  
\sum_{\sigma\in\mf B^+} \bb P_{\bf -1} [ H_{\sigma} = H_{\mf B^+} ]  \,
\bb P_{\sigma} [ H_{\{{\bf 0}, {\bf +1}\} }  < H_{\bf -1} ] \;.
\end{equation*}
This proves \eqref{601} in view of the definition \eqref{defcapa} of
the capacity. 

By \eqref{410} and \eqref{601}, for any configuration $\sigma^*\in\mf
R^a$,
\begin{equation*}
\frac{\capa ({\bf -1} , \{{\bf 0}, {\bf +1}\})} 
{|\Lambda_L|\, \mu_\beta(\sigma^*)} \;=\; [4(2n_0+1) + R^{(1)}_\beta] 
\sum_{\sigma\in\mf B^+} \bb P_{\bf -1} [ H_{\sigma} = H_{\mf B^+} ]  \,
\bb P_{\sigma} [ H_{\{{\bf 0}, {\bf +1}\} }  < H_{\bf -1} ] \;.
\end{equation*}
where $|R^{(1)}_\beta| \le C_0 \, [\epsilon(\beta) + \delta_1(\beta)]$
for $\beta$ large.  

Consider the sum. By Proposition \ref{Lemma1}, we may ignore the terms
$\sigma\not \in \mf R^a$. On the other hand, Proposition \ref{ml7}
provides the asymptotic value of $\bb P_{\sigma} [ H_{\{{\bf 0}, {\bf
    +1}\} }  < H_{\bf -1} ]$ for $\sigma\in \mf R^a$. Putting together
these two result yields that the sum is equal to
\begin{equation*}
\frac 12\, \bb P_{\bf -1} [ H_{\mf R^{lc}} = H_{\mf B^+} ]  \;+\;
\frac 23\, \bb P_{\sigma} [ H_{\mf R^{li}} = H_{\mf B^+} ] \;+\;
R^{(2)}_\beta \;,
\end{equation*}
where the absolute value of the remainder $R^{(2)}_\beta$ is bounded
by $C_0[\epsilon (\beta) + \delta(\beta)]$.  By Lemma \ref{as61}, this
expression is equal to
\begin{equation*}
\frac 12\,  \frac{2}{2n_0+1} \;+\; \frac 23\, \frac{n_0-1}{2n_0+1}
\;+\; R^{(3)}_\beta \;,
\end{equation*}
where $|R^{(3)}_\beta| \le C_0 \, [\epsilon (\beta) + \delta(\beta)]$
because $\delta_1(\beta) \le \delta(\beta)$. The first assertion of
the proposition follows from the previous estimates.
\end{proof}

The same proof yields that for any configuration $\eta\in\mf R^l$,
$\xi\in\mf R^l_0$,
\begin{equation*}
\lim_{\beta\to\infty} \frac{\capa ({\bf -1}, {\bf 0} )}
{\mu_\beta (\eta)\, |\Lambda_L|}\;=\;\frac{4(2n_0+1)}3\, \;=\; 
\lim_{\beta\to\infty} \frac{\capa ({\bf 0}, {\bf +1} )}
{\mu_\beta (\xi)\, |\Lambda_L|}\;.
\end{equation*}
In particular,
\begin{equation}
\label{m9}
\lim_{\beta\to\infty} 
\frac{\capa ({\bf -1}, {\bf 0})}{\capa ({\bf -1}, \{{\bf 0} , 
{\bf  +1}\})}\;=\; 1 \;, \quad
\lim_{\beta\to\infty} 
\frac{\capa ({\bf 0}, {\bf +1})}{\capa ({\bf 0}, \{{\bf -1} , 
{\bf  +1}\})}\;=\; 1 \;.
\end{equation}

\begin{corollary}
\label{as71b}
We have that
\begin{equation*}
\lim_{\beta\to\infty} 
\, \frac{\capa({\bf 0}, \ms M\setminus\{{\bf 0}\})}{\mu_\beta({\bf 0})} 
\, \theta_\beta\;=\; 1\;, 
\end{equation*}
where $\theta_\beta$ has been introduced in \eqref{71}.
\end{corollary}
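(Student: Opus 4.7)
The plan is to unfold the definition of $\theta_\beta$ and apply Proposition \ref{mt3} to both capacities in the product. Writing out
\begin{equation*}
\frac{\capa({\bf 0}, \ms M\setminus\{{\bf 0}\})}{\mu_\beta({\bf 0})} \, \theta_\beta \;=\;
\frac{\mu_\beta(\moins)}{\mu_\beta(\zero)}\,\cdot\,
\frac{\capa(\zero,\{\moins,\plus\})}{\capa(\moins,\{\zero,\plus\})}\;,
\end{equation*}
we see that the claim reduces to showing that the ratio of capacities on the right-hand side is asymptotically $\mu_\beta(\zero)/\mu_\beta(\moins)$.

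By Proposition \ref{mt3}, for any $\eta\in\mf R^l$ and $\xi\in\mf R^l_0$,
\begin{equation*}
\frac{\capa(\zero,\{\moins,\plus\})}{\capa(\moins,\{\zero,\plus\})}
\;=\; [1+o_\beta(1)]\, \frac{\mu_\beta(\xi)}{\mu_\beta(\eta)}\;.
\end{equation*}
So the main (and only genuine) step is the energy identity
\begin{equation*}
\bb H(\xi) - \bb H(\eta) \;=\; \bb H(\zero) - \bb H(\moins) \;=\; -\,h\,|\Lambda_L|\;.
\end{equation*}
This is immediate from the definitions: both $\eta$ and $\xi$ are obtained from $\moins$, $\zero$, respectively, by flipping $n_0(n_0+1)+1$ spins upward in the identical geometric pattern (an $n_0\times(n_0+1)$-rectangle with one attached spin on the longest side). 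Consequently the interface contribution $\sum(\sigma(y)-\sigma(x))^2$ is the same for $\eta$ relative to $\moins$ and for $\xi$ relative to $\zero$ (equal to $4(n_0+1)$ in both cases), and the magnetic-field contribution changes by the same amount $-h[n_0(n_0+1)+1]$. Hence
\begin{equation*}
\bb H(\eta) - \bb H(\moins) \;=\; \bb H(\xi) - \bb H(\zero)\;,
\end{equation*}
which rearranges to the displayed identity.

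Combining the three ingredients,
\begin{equation*}
\frac{\mu_\beta(\xi)}{\mu_\beta(\eta)} \;=\; e^{-\beta[\bb H(\xi)-\bb H(\eta)]}
\;=\; e^{-\beta[\bb H(\zero)-\bb H(\moins)]}
\;=\; \frac{\mu_\beta(\zero)}{\mu_\beta(\moins)}\;,
\end{equation*}
and inserting this into the first display yields
\begin{equation*}
\frac{\capa({\bf 0},\ms M\setminus\{{\bf 0}\})}{\mu_\beta({\bf 0})}\,\theta_\beta
\;=\; [1+o_\beta(1)]\;,
\end{equation*}
which is the corollary. No step here is delicate — the only point requiring a line of thought is the symmetry of the energy landscape around $\moins$ and $\zero$, which is built into the model because the $-1 \leftrightarrow 0$ transition has the same quadratic cost as the $0 \leftrightarrow +1$ transition and the magnetic field acts linearly.
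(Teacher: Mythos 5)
Your proof is correct and follows essentially the same route as the paper: unfold $\theta_\beta$, apply Proposition \ref{mt3} to both capacities, and use the energy symmetry $\bb H(\eta)-\bb H(\moins)=\bb H(\xi)-\bb H(\zero)$ (which the paper summarizes as ``the first and fourth term cancel''). Your explicit verification of that cancellation via the identical interface cost $4(n_0+1)$ and field contribution is exactly the point the paper leaves implicit.
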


\begin{proof}
Fix $\eta\in \mf R^a$ and $\xi\in \mf R^a_0$. By definition of
$\theta_\beta$, the expression appearing in the statement of the
corollary can be written as
\begin{equation*}
\frac{\mu_\beta({\xi})}{\mu_\beta({\bf 0})} \,
\frac{\capa({\bf 0}, \ms M\setminus\{{\bf 0}\})}
{\mu_\beta({\xi}) \, |\Lambda_L|} 
\, \frac {\mu_\beta({\eta})\, |\Lambda_L|} 
{\capa({\bf -1}, \ms M\setminus\{{\bf -1}\})}\,
\frac{\mu_\beta({\bf -1})}{\mu_\beta({\eta})}\;\cdot
\end{equation*}
By the previous lemma, the product of the second and third expression
converges to $1$, while the first and fourth term cancel.
\end{proof}

\begin{lemma}
\label{as64}
We have that
\begin{equation*}
\lim_{\beta\to\infty} 
\frac{\capa ({\bf -1}, {\bf +1})}{\capa ({\bf -1}, \{{\bf 0} , 
{\bf  +1}\})}\;=\; 1 \;.
\end{equation*}
\end{lemma}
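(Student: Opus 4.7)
The plan is to write both capacities as escape probabilities from $\bf -1$ and then show that, when escaping to $\{\bf 0, \bf +1\}$, the process both chooses $\bf 0$ over $\bf +1$ and, after reaching $\bf 0$, reaches $\bf +1$ before returning to $\bf -1$, each with probability tending to $1$. The upper bound $\capa({\bf -1}, {\bf +1}) \le \capa({\bf -1}, \{{\bf 0}, {\bf +1}\})$ is immediate from the monotonicity of the capacity in the target set (Dirichlet principle), so everything reduces to a matching lower bound.

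Since the capacity of a singleton pair $(\{{\bf -1}\}, B)$ equals $\mu_\beta({\bf -1})\, \lambda_\beta({\bf -1})\, \PP_{\bf -1}[H_B < H^+_{\bf -1}]$, the ratio to be estimated is
\begin{equation*}
\frac{\capa({\bf -1}, {\bf +1})}{\capa({\bf -1}, \{{\bf 0}, {\bf +1}\})}
\;=\; \frac{\PP_{\bf -1}[H_{\bf +1} < H^+_{\bf -1}]}
{\PP_{\bf -1}[H_{\{\bf 0, \bf +1\}} < H^+_{\bf -1}]}\;\cdot
\end{equation*}
Write $p = \PP_{\bf -1}[H_{\{\bf 0, \bf +1\}} < H^+_{\bf -1}]$, $p_0 = \PP_{\bf -1}[H_{\bf 0} = H_{\{\bf 0, \bf +1\}} < H^+_{\bf -1}]$ and $p_+ = \PP_{\bf -1}[H_{\bf +1} = H_{\{\bf 0, \bf +1\}} < H^+_{\bf -1}]$, so that $p = p_0 + p_+$. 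A standard geometric argument (by conditioning on whether the process returns to $\bf -1$ before reaching $\{\bf 0, \bf +1\}$, and iterating) yields
\begin{equation*}
\PP_{\bf -1}[H_{\bf +1} < H_{\bf 0}] \;=\; \frac{p_+}{p_0 + p_+}\;,
\end{equation*}
and Theorem \ref{mainprop} asserts that the left-hand side tends to $0$. Hence $p_+ = o(p)$ and $p_0 = p\,(1-o(1))$.

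It remains to lower bound $\PP_{\bf -1}[H_{\bf +1} < H^+_{\bf -1}]$ by restricting to trajectories that first hit $\bf 0$ and then reach $\bf +1$ before returning to $\bf -1$. By the strong Markov property applied at $H_{\bf 0}$,
\begin{equation*}
\PP_{\bf -1}[H_{\bf +1} < H^+_{\bf -1}]
\;\ge\; p_0\;\PP_{\bf 0}[H_{\bf +1} < H_{\bf -1}]\;.
\end{equation*}
The second factor tends to $1$, again by Theorem \ref{mainprop}. Combining with $p_0 = p\,(1 - o(1))$ gives $\PP_{\bf -1}[H_{\bf +1} < H^+_{\bf -1}] \ge p\,(1-o(1))$, which together with the trivial upper bound closes the lemma. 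No step in this argument is truly an obstacle: the entire difficulty has already been absorbed into Theorem \ref{mainprop}, whose two conclusions are exactly the two $o(1)$ inputs used above, and the rest is a short manipulation with the strong Markov property.
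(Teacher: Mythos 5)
Your proof is correct, but it follows a genuinely different route from the paper's. The paper proves the lemma by redoing the computation of Proposition \ref{mt3} for the pair $({\bf -1},{\bf +1})$: it decomposes $\capa({\bf -1},{\bf +1})$ over the hitting distribution on the boundary $\mf B^+$ of the valley (formula \eqref{601}), inserts the event $H_\zero < H_\plus$ using Proposition \ref{ml7}, applies the strong Markov property at $H_\zero$ together with Theorem \ref{mainprop}, and concludes that $\capa({\bf -1},{\bf +1})/(\mu_\beta(\sigma^*)|\Lambda_L|)$ converges to the same explicit constant $4(2n_0+1)/3$ as in Proposition \ref{mt3}. You instead bypass the microscopic structure entirely: after reducing both capacities to escape probabilities from $\bf -1$ via \eqref{defcapa}, you use only the renewal identity \eqref{probastar}, the strong Markov property at $H_{\bf 0}$, and the two conclusions of Theorem \ref{mainprop} (both of which are established in Section \ref{sec3}, before this lemma, so there is no circularity). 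Your argument is shorter, needs no information about $\mf B^+$, $\mf R^a$ or the constants from Lemma \ref{as61}, and would apply verbatim to any reversible chain satisfying the analogue of Theorem \ref{mainprop}; the paper's argument is longer but delivers the sharp asymptotics of $\capa({\bf -1},{\bf +1})$ itself as a by-product, consistent with the capacity estimates it is assembling in that section.
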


\begin{proof}
Fix a configuration $\sigma^*$ in $\mf R^a$.
By the proof of Proposition \ref{mt3},
\begin{equation*}
\frac{\capa ({\bf -1}, {\bf +1})}{\mu_\beta(\sigma^*)\, |\Lambda_L|}\;=\; 
4(2n_0+1) \sum_{\sigma\in\mf R^a} \bb P_{\bf -1} [ H_{\sigma} = H_{\mf B^+} ]  \,
\bb P_{\sigma} [ H_{ {\bf +1}}  < H_{\bf -1} ] \;+\; R^{(1)}_\beta \;,
\end{equation*}
where $|R^{(1)}_\beta| \le C_0 [\epsilon(\beta) + \delta(\beta)]$ for
some finite constant $C_0$.

By Proposition \ref{ml7}, starting from $\sigma\in \mf R^a$ we reach
$\{\moins, \zero\}$ before $\plus$ with a probability close to
$1$. Hence, up to a small error, we may include the event $ H_{\{{\bf
    -1},\zero\}} < H_{\bf +1}$ inside the second probability which
becomes $\{H_\zero < H_{ {\bf +1}} < H_{\bf -1}\}$. Applying the
strong Markov property, the right-hand side of the previous expression
becomes
\begin{equation*}
4(2n_0+1) \, \bb P_{\zero} [ H_{ {\bf +1}}  < H_{\bf -1} ] 
\sum_{\sigma\in\mf R^a} \bb P_{\bf -1} [ H_{\sigma} = H_{\mf B^+} ]  \,
\bb P_{\sigma} [ H_\zero  < H_{\{\moins, \plus\}} ] 
\;+\; R^{(1)}_\beta 
\end{equation*}
for a new remainder $R^{(1)}_\beta$ whose absolute value is bounded by
$C_0 [\epsilon(\beta) + \delta(\beta)]$. 

By Theorem \ref{mainprop}, $\bb P_{\zero} [ H_{ {\bf +1}} < H_{\bf -1}
]$ converges to $1$. On the other hand, the sum can be handled as in
the proof of Proposition \ref{mt3} to yield that
\begin{equation*}
\lim_{\beta\to\infty}
\frac{\capa ({\bf -1}, {\bf +1})}{\mu_\beta(\sigma^*)\, |\Lambda_L|}\;=\; 
\frac{4(2n_0+1)}3 \;\cdot
\end{equation*}
This completes the proof of the lemma in view of Proposition \ref{mt3}.
\end{proof}

\begin{lemma}
\label{500}
We have that
\begin{equation*}
\lim_{\beta\to\infty} \frac{\capa({\bf +1}, \{{\bf -1}, {\bf 0}\})} 
{\capa({\bf 0}, \{{\bf -1}, {\bf +1}\})} \;=\; 1\;.
\end{equation*}
\end{lemma}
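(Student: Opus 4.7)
The plan is to sandwich $\capa(\plus, \{\moins, \zero\})$ between two quantities asymptotic to $\capa(\zero, \{\moins, \plus\})$, relying only on monotonicity of the capacity, reversibility, a union bound, and the asymptotics already established in the paper. Throughout I will use the reversibility identities $\capa(\plus, \zero) = \capa(\zero, \plus)$ and $\capa(\plus, \moins) = \capa(\moins, \plus)$. For the lower bound, monotonicity of the capacity in its target set gives $\capa(\plus, \{\moins, \zero\}) \ge \capa(\plus, \zero) = \capa(\zero, \plus)$, and the second identity in \eqref{m9} then replaces $\capa(\zero, \plus)$ by $[1+o_\beta(1)]\,\capa(\zero, \{\moins, \plus\})$, delivering $\liminf_{\beta\to\infty}\capa(\plus, \{\moins, \zero\})/\capa(\zero, \{\moins, \plus\}) \ge 1$.

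For the upper bound I would start from the elementary subadditivity
\begin{equation*}
\bb P_{\plus}[H_{\{\moins, \zero\}} < H_\plus^+] \;\le\; \bb P_{\plus}[H_\moins < H_\plus^+] \,+\, \bb P_{\plus}[H_\zero < H_\plus^+],
\end{equation*}
which, after multiplying by $\mu_\beta(\plus)\lambda_\beta(\plus)$ and applying reversibility once more, yields $\capa(\plus, \{\moins, \zero\}) \le \capa(\moins, \plus) + \capa(\zero, \plus)$. The second summand is controlled exactly as in the lower bound; the whole question therefore reduces to showing that the cross-term $\capa(\moins, \plus)$ is negligible compared to $\capa(\zero, \{\moins, \plus\})$.

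To dispatch the cross-term I would combine Lemma \ref{as64} with Proposition \ref{mt3} to write $\capa(\moins, \plus) = [1+o_\beta(1)]\,\tfrac{4(2n_0+1)}{3}\,\mu_\beta(\eta)\,|\Lambda_L|$ for any $\eta \in \mf R^l$, and use the second identity in Proposition \ref{mt3} to write $\capa(\zero, \{\moins, \plus\}) = [1+o_\beta(1)]\,\tfrac{4(2n_0+1)}{3}\,\mu_\beta(\xi)\,|\Lambda_L|$ for any $\xi \in \mf R^l_0$. A direct computation from \eqref{hamiltonian}, using that both critical droplets sit at the same activation energy $a = 4(n_0+1) - h[n_0(n_0+1)+1]$ above their respective local minima, yields $\bb H(\eta) - \bb H(\xi) = \bb H(\moins) - \bb H(\zero) = h|\Lambda_L|$, so $\mu_\beta(\eta)/\mu_\beta(\xi) = e^{-\beta h|\Lambda_L|}$, which tends to $0$ as $\beta\to\infty$ since $h>0$ is fixed and $|\Lambda_L|\ge 1$. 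Consequently $\capa(\moins, \plus) = o(\capa(\zero, \{\moins, \plus\}))$, closing the upper bound.

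I do not anticipate any serious obstacle: the whole argument is essentially a bookkeeping exercise on top of the already-established ingredients. The underlying conceptual point is that $\moins$ carries super-exponentially smaller Gibbs weight than $\zero$, so trajectories from $\plus$ landing at $\moins$ without first touching $\zero$ contribute negligibly to the capacity, and the entire computation collapses to $\capa(\plus, \zero) = \capa(\zero, \plus)$, which by \eqref{m9} is already known to be asymptotic to $\capa(\zero, \{\moins, \plus\})$.
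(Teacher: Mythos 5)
Your proposal is correct and follows essentially the same route as the paper: the same sandwich $\capa(\plus,\zero)\le\capa(\plus,\{\moins,\zero\})\le\capa(\plus,\zero)+\capa(\plus,\moins)$ (the paper cites the subadditivity from \cite{ll} where you rederive it via a union bound), the same reduction of the cross-term to $\capa(\moins,\plus)/\capa(\zero,\{\moins,\plus\})\to 0$ via Lemma \ref{as64} and Proposition \ref{mt3}, and the same conclusion via the second identity in \eqref{m9}. Your explicit computation $\mu_\beta(\eta)/\mu_\beta(\xi)=e^{-\beta h|\Lambda_L|}$ correctly fills in the step the paper leaves implicit when it says the claim ``follows from Proposition \ref{mt3}.''
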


\begin{proof}
By monotonicity of the capacity and by equation (3.5) in \cite{ll},
\begin{equation*}
\capa({\bf +1}, {\bf 0}) \;\le\;
\capa({\bf +1}, \{{\bf -1}, {\bf 0}\}) \;\le\;
\capa({\bf +1}, {\bf 0}) \;+\; \capa({\bf +1}, {\bf -1})\;.
\end{equation*}
We claim that $\capa({\bf -1}, {\bf +1})/\capa({\bf 0} ,{\bf +1}) \to
0$. By Lemma \ref{as64}, we may replace the numerator by $\capa({\bf
  -1}, \{\zero, \plus\})$, and by the second identity of \eqref{m9}, we
may replace the denominator by $\capa(\zero, \{\moins, \plus\})$. At
this point, the claim follows from Proposition \ref{mt3}.

Therefore,
\begin{equation*}
\lim_{\beta\to\infty} \frac{\capa({\bf +1}, \{{\bf -1}, {\bf 0}\})} 
{\capa({\bf 0}, {\bf +1})} \;=\; 1\;.
\end{equation*}
To complete the proof, it remains to recall again the second identity
in \eqref{m9}.
\end{proof}

It follows from the previous result that
\begin{equation}
\label{m10}
\lim_{\beta\to\infty} 
\, \frac{\capa({\bf +1}, \ms M\setminus\{{\bf +1}\})}{\mu_\beta({\bf +1})} 
\, \theta_\beta\;=\; 0\;.    
\end{equation}
Indeed, by the previous lemma, this limit is equal to
\begin{equation*}
\lim_{\beta\to\infty} 
\theta_\beta \, \frac{\capa({\bf 0}, \{{\bf -1}, {\bf +1}\})}{\mu_\beta({\bf 0})}
\, \frac {\mu_\beta({\bf 0})}{\mu_\beta({\bf +1})} \;\cdot
\end{equation*}
This expression vanishes in view of Corollary \ref{as71b} and because
$\mu_\beta({\bf 0})/\mu_\beta({\bf +1}) \to 0$.

\begin{proof}[Proof of Proposition \ref{p71}]
Denote by $r_\beta(\eta,\xi)$ the jump rates of the chain
$\eta_{\theta_\beta t}$. It is enough to prove that
\begin{equation}
\label{lim}
\lim_{\beta\to\infty} r_\beta(\eta,\xi) \;=\; \mb r(\eta,\xi)
\end{equation}
for all $\eta\not =\xi\in\ms M$.

By \cite[Proposition 6.1]{bl2}, the jump rates $r_\beta (\eta,\xi)$,
$\eta\not =\xi\in \ms M$, of the Markov chain $\eta_{\theta_\beta t}$
  are given by
\begin{equation*}
r_\beta(\eta,\xi) \;=\; \theta_\beta \, \lambda(\eta) \, 
\bb P_{\eta} [H_\xi = H^+_{\ms M}]\;. 
\end{equation*}
Dividing and multiplying the previous expression by $\bb P_{\eta}
[H_{\ms M \setminus\{\eta\}} < H^+_{\eta}]$, by definition of the
capacity and by \eqref{probastar}, 
\begin{equation*}
r_\beta(\eta,\xi) \;=\; \frac{\theta_\beta}{\mu_\beta(\eta)} 
\, \capa(\eta, \ms M\setminus\{\eta\})\; 
\bb P_{\eta} [H_\xi < H_{\ms M \setminus \{\eta,\xi\}}]\;. 
\end{equation*}

It follows from this identity and from \eqref{m10} that for $\xi =
{\bf -1}$, ${\bf 0}$,
\begin{equation*}
\lim_{\beta\to\infty} r_\beta({\bf +1},\xi) \;\le\;
\lim_{\beta\to\infty} \frac{\theta_\beta}{\mu_\beta({\bf +1})} 
\, \capa({\bf +1}, \ms M\setminus\{{\bf +1}\}) \;=\; 0\;.
\end{equation*}
On the other hand, by Corollary \ref{as71b}, 
\begin{equation*}
\lim_{\beta\to\infty} \frac{\theta_\beta}{\mu_\beta({\bf 0})} 
\, \capa({\bf 0}, \ms M\setminus\{{\bf 0}\}) \;=\; 1\;,
\end{equation*}
while, by definition, $\theta_\beta \, \capa({\bf -1}, \ms
M\setminus\{{\bf -1}\})/ \mu_\beta({\bf -1}) =1$.  Furthermore, by
Theorem \ref{mainprop},
\begin{equation*}
\lim_{\beta\to\infty} \bb P_{\bf -1} [H_{\bf +1} < H_{\bf 0}]\;=\;
\lim_{\beta\to\infty} \bb P_{\bf 0} [H_{\bf -1} < H_{\bf +1}]\;=\;
0\;. 
\end{equation*}
This yields \eqref{lim} and completes the proof of the lemma.
\end{proof}

\section{The time spent out of $\ms M$}
\label{sec4}

We prove in this section that the total time spent out of $\ms M$ by
the process $\sigma(t\theta_\beta)$ is negligible.  Unless otherwise
stated, we assume that the hypotheses of Theorem \ref{mainprop} are in
force.

\begin{proposition}
\label{ml6}
Let $\Delta = \Omega_L \setminus \ms M$. For every $\xi \in \ms M$,
$t>0$, 
\begin{equation*}
\lim_{\beta\to\infty} \bb E_\xi \Big[ \int_0^{t} \mb
1\{\sigma(s\theta_\beta) \in \Delta\}\, ds \Big] \;=\; 0\;. 
\end{equation*}
\end{proposition}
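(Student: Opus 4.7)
The plan is to use the change of variable $u = s\theta_\beta$ to rewrite the left-hand side as $\theta_\beta^{-1}\,\bb E_\xi\big[\int_0^{t\theta_\beta} \mb 1\{\sigma_u \in \Delta\}\,du\big]$, so it suffices to show this last integral has expectation $o(\theta_\beta)$. I would decompose it as a sum over the successive excursions of $\sigma$ out of $\ms M$ in $[0,t\theta_\beta]$, each excursion beginning at a jump from some $\zeta \in \ms M$ into $\Delta$ and ending at the first return to $\ms M$, and separately bound the number and the mean length of such excursions.

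For the count, the expected number of excursions equals the expected number of jumps out of $\ms M$, which is at most $\bb E_\xi\big[\int_0^{t\theta_\beta} \lambda(\sigma_u)\,\mb 1\{\sigma_u \in \ms M\}\,du\big] \le t\theta_\beta \max_{\zeta \in \ms M}\lambda(\zeta)$. A direct computation using the rates $R_\beta$ gives $\lambda(\moins), \lambda(\zero) \le C|\Lambda_L| e^{-(4-h)\beta}$ and $\lambda(\plus) \le C|\Lambda_L| e^{-(4+h)\beta}$, so the expected number of excursions in $[0, t\theta_\beta]$ is at most $C t\theta_\beta |\Lambda_L| e^{-(4-h)\beta}$.

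For the lengths, I split excursions into two types. Type A excursions return to the same state $\zeta \in \ms M$: starting from $\zeta^{x,\pm}$ the flip-back rate is $\ge 1$ while every other flip has rate at most $e^{-c\beta}$ for some $c>0$, and iterating the argument of Lemma \ref{ml8} (together with its counterparts around $\zero$ and $\plus$) yields a uniform $O(1)$ bound on the mean Type A length. Type B excursions end at a different state of $\ms M$ and hence correspond to jumps of the trace process; by Proposition \ref{p71} their expected number in $[0, t\theta_\beta]$ is $O(1)$, and by the growth mechanism analyzed in Section \ref{proofs1} (a supercritical droplet adds a row or column at rate $e^{-(2-h)\beta}/|\Lambda_L|$, and $L = |\Lambda_L|^{1/2}$ such additions suffice to invade the torus) each Type B excursion has mean length $O(|\Lambda_L|^{3/2} e^{(2-h)\beta})$.

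Combining, the quantity of interest is bounded by $C t|\Lambda_L| e^{-(4-h)\beta} + C|\Lambda_L|^{5/2} e^{(2-h-a)\beta}$, where $a = 4(n_0+1) - h[n_0(n_0+1)+1]$ comes from $\theta_\beta \sim e^{a\beta}/|\Lambda_L|$. The first term vanishes because $4-h > 2$ and $|\Lambda_L| e^{-2\beta} \to 0$ by \eqref{8-3}. For the second, writing $|\Lambda_L|^{5/2} = |\Lambda_L|^2 \cdot |\Lambda_L|^{1/2}$ and using $|\Lambda_L|^2 = o(e^{(2-h)\beta})$ from \eqref{8-4} together with $|\Lambda_L|^{1/2} = o(e^\beta)$ from \eqref{8-3} reduces everything to the elementary inequality $a > 5 - 2h$, which one verifies directly from the formula for $a$ across the admissible range of $h$ (equivalently $2n_0^2 - 3n_0 + 2 > 0$, automatic for $n_0 \ge 2$). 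The main obstacle will be making rigorous the uniform $O(1)$ bound on Type A excursion lengths, for which I would adapt the Dirichlet and Thomson capacity computations of Section \ref{energy} to the reflected process on the valley around each $\zeta \in \ms M$, using the geometric decay of the probability of wandering deep into $\Delta$ during a single excursion.
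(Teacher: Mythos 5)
Your excursion decomposition is a genuinely different route from the paper's proof, which instead splits $[0,t\theta_\beta]$ at the successive hitting times $H_{\mf B^+}$, $H_\zero$, $H_{\mf B^+_0}$, $H_\plus$ and controls each piece through the potential-theoretic bound $\bb E_\moins\big[\int_0^{H_{\mf B^+}}\mb 1\{\sigma(s)\in\Delta\}\,ds\big]\le \mu_\beta(\Delta\cap\ms V_\moins)/\capa(\moins,\mf B^+)$ of \cite[Proposition 6.10]{bl2} together with the concentration estimate of Lemma \ref{p74}, plus Lemmata \ref{p75} and \ref{p76} for the intermediate stages. Your bookkeeping is correct as far as it goes: the compensator identity for the number of exits from $\ms M$, the holding rates $\lambda(\moins),\lambda(\zero)\asymp|\Lambda_L|e^{-(4-h)\beta}$ and $\lambda(\plus)\asymp|\Lambda_L|e^{-(4+h)\beta}$, and the closing arithmetic $a>5-2h$ all check out. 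But there is a genuine gap, and it sits exactly where the difficulty of the proposition lies.

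The gap is the assertion that a Type B excursion has conditional mean length $O(|\Lambda_L|^{3/2}e^{(2-h)\beta})$. A Type B excursion starting from $\moins^{x,+}$ has two phases: a nucleation phase, in which a single $0$-spin must climb to a critical droplet \emph{without the path ever touching $\moins$}, and the growth phase of Section \ref{proofs1}. Section \ref{proofs1} controls only the second phase. The first phase is an uphill climb conditioned on an event of probability of order $e^{-(a-4+h)\beta}$; bounding its conditional duration is a statement about the conditioned (time-reversed) dynamics that nothing in Sections \ref{energy} or \ref{proofs1} provides, and it does not follow from citing the growth mechanism. A milder version of the same objection applies to the Type A bound: what actually enters your Wald-type estimate is the unconditional quantity $\bb E_{\zeta^{x,\pm}}[H_{\ms M}]$, and one must check that the rare deep excursions contribute $o(e^{(4-h)\beta}/|\Lambda_L|)$; the crude bound $\bb E_y[H_{\ms M}]\le \mu_\beta(\Omega_L)/\capa(y,\ms M)$ is useless here since $\mu_\beta$ is overwhelmingly concentrated near $\plus$. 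You flag this as the "main obstacle" and propose adapting the capacity computations to the reflected processes on the valleys --- but carrying that out is essentially reconstructing the paper's argument (stationarity of the reflected measure plus \cite[Proposition 6.10]{bl2}), at which point the excursion decomposition no longer buys anything. In short: the skeleton and the numerology are sound, but the two mean-excursion-length estimates are the actual content of the proposition and are not supplied.
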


The proof of this proposition is divided in several steps. Suppose
that the process starts from $\moins$.  In this case, we divide the
time interval $[0,t]$ in 5 pieces, and we prove that the time spent in
$\Delta$ in each time-interval $[0, H_{\mf B^+}]$, $[H_{\mf B^+},
H_\zero]$, $[H_\zero, H_{\mf B^+_0}]$, $[H_{\mf B^+_0}, H_\plus]$ and
$[H_\plus, \infty)$ is negligible.

The proof of the last step requires the introduction of the valley of
$\plus$ which is slightly different from $\ms V_\moins$ and $\ms
V_\zero$. Denote by $\ms V_\plus$ the valley of $\plus$. This is the
set constituted of all configurations which can be attained from
$\plus$ by flipping $n_0(n_0+1)$ or less spins of $\plus$.  The
boundary of this set, denoted by $\mf B^+_\plus$, is formed by all
configurations which differ from $\plus$ at exactly $n_0(n_0+1)$
sites.  The configuration with minimal energy in $\mf B^+_\plus$ is
the one where $n_0(n_0+1)$ $0$-spins form a $n_0\times
(n_0+1)$-rectangle. Denote the set of these configurations by $\mf
R_\plus$.  Fix $\eta\in \mf R_\plus$ and $\xi\in \mf R^a$ and note
that
\begin{equation}
\label{8-5}
\bb H(\eta) \;-\; \bb H(\plus) \;>\; \bb H(\xi) \;-\; \bb
H(\moins)\;. 
\end{equation}
Thus $\ms V_\plus$ is a deeper valley than $\ms V_\moins$ or $\ms
V_\zero$. 

Indeed, according to \cite[Theorem 2.6]{bl2}, the depth of the valley
$\ms V_\plus$ is given by
$\mu_\beta(\plus)/\capa (\plus, \mf B^+_\plus)$. As in the proof of
\eqref{410}, or by applying the Dirichlet and the Thomson principles,
we have that $\capa (\plus, \mf B^+_\plus)$ is of the order of
$|\Lambda_L|\, \mu_\beta(\eta)$ for $\eta\in \mf R_\plus$. Hence the
depth of the valley $\ms V_\plus$ is of the order of
$\kappa_\beta := e^{[\bb H(\eta) - \bb H(\plus) ]\beta}/|\Lambda_L|$,
and starting from $\plus$,
\begin{equation}
\label{8-f1}
H_{\mf B^+_\plus}/\kappa_\beta \quad\text{converges to an exponential
  random variable.}
\end{equation}
By \eqref{2-1}, \eqref{8-5}, $\theta_\beta/\kappa_\beta \to 0$. Hence,
by \eqref{8-f1}, for all $t>0$,
\begin{equation}
\label{8-6}
\lim_{\beta\to\infty}
\bb P_\plus \big[ H_{\mf B^+_\plus} < t \,\theta_\beta\big] \;=\; 0 \;. 
\end{equation}

We turn to the proof of Proposition \ref{ml6}.  We first show that
conditioned to the valley $\ms V_\moins$, the measure $\mu_\beta$ is
concentrated on the configuration $\moins$. The same argument yields
that this result holds for the pairs $(\zero , \ms V_\zero)$, $(\plus
, \ms V_\plus)$.

\begin{lemma}
\label{p74}
Suppose that \eqref{8-3} holds. Then, there exists a constant $C_0$
such that 
\begin{equation*}
\frac{\mu_\beta(\ms V_\moins \setminus
\{\moins\})} {\mu_\beta (\moins)} \;\le \; C_0\,
|\Lambda_L|\, e^{-2\beta} 
\end{equation*}
for all $\beta\ge C_0$.
\end{lemma}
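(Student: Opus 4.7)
The plan is to expand $\mu_\beta(\ms V_\moins \setminus \{\moins\})/\mu_\beta(\moins) = \sum_{\sigma} e^{-\beta[\bb H(\sigma) - \bb H(\moins)]}$ and bound it by an energy/entropy estimate. Since every $\sigma \in \ms V_\moins$ satisfies $N(\sigma) \le n_0(n_0+1)+1$, Assertion \ref{fasB} applies: the map $\sigma \mapsto \sigma^o$ replacing each $+1$-spin by a $0$-spin does not increase the energy, and is at most $2^{n_0(n_0+1)+1}$-to-one. Up to a multiplicative constant $C_0$ depending only on $h$, one is thus reduced to bounding the analogous sum over $\sigma \in \{-1,0\}^{\Lambda_L} \cap (\ms V_\moins \setminus \{\moins\})$, for which $\bb H(\sigma) - \bb H(\moins) = I_{-1,0}(\sigma) - h N(\sigma)$.

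I would then decompose the $0$-spins of such a $\sigma$ into connected components $S_1,\dots,S_{j(\sigma)}$ of sizes $k_1,\dots,k_j$ with $\sum_i k_i \le n_0(n_0+1)+1$, so that $\bb H(\sigma) - \bb H(\moins) = \sum_i [I(S_i) - h k_i]$. The central analytic step is the componentwise perimeter bound
\begin{equation*}
I_{\min}(k)\,-\, hk \;\ge\; 2 \qquad\text{for every } k\in\{1,\dots,n_0(n_0+1)+1\},
\end{equation*}
where $I_{\min}(k) = 2\lceil 2\sqrt{k}\rceil$ is the minimum edge boundary of a connected polyomino of $k$ sites in $\bb Z^2$. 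I expect to verify this by a short case analysis using the arithmetic constraint $h n_0 < 2$: for square sizes $k = a^2$ with $1 \le a \le n_0$ the bound $4a - h a^2 \ge 2$ follows from $ha \le h n_0 < 2$; for intermediate sizes $a^2 < k \le a(a+1)$ one uses $I_{\min}(k)\ge 4a+2$; and at the top of the range $k = n_0(n_0+1)+1$ one invokes additionally $h(n_0+1) \le 2+h$, a direct consequence of $h n_0 < 2$. This yields the uniform bound $\bb H(\sigma) - \bb H(\moins) \ge 2\, j(\sigma)$, which is the main obstacle in the proof.

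It remains to count. Each connected polyomino of size at most $n_0(n_0+1)+1$ in $\Lambda_L$ is specified by a root site (a factor $|\Lambda_L|$) and a shape (of which there are only finitely many, depending on $h$), so the number of $\sigma\in\{-1,0\}^{\Lambda_L}$ with exactly $j$ components of total size $\le n_0(n_0+1)+1$ is bounded by $(C_0\, |\Lambda_L|)^j/j!$. Combining with the energy bound,
\begin{equation*}
\sum_{\sigma} e^{-\beta[\bb H(\sigma) - \bb H(\moins)]}
\;\le\; \sum_{j\ge 1} \frac{(C_0\, |\Lambda_L|\, e^{-2\beta})^j}{j!}
\;\le\; \exp\bigl\{C_0\, |\Lambda_L|\, e^{-2\beta}\bigr\} \,-\, 1
\;\le\; 2 C_0\, |\Lambda_L|\, e^{-2\beta},
\end{equation*}
where the last step uses hypothesis \eqref{8-3} to guarantee that $C_0|\Lambda_L| e^{-2\beta}\le 1$ for $\beta \ge C_0$. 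Composed with the first-paragraph reduction, this yields the lemma. The only delicate ingredient is the perimeter estimate of the second paragraph; the combinatorial counting and the geometric summation are routine.
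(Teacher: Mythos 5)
Your proposal is correct and follows essentially the same route as the paper: reduction to $\{-1,0\}$-valued configurations via Assertion \ref{fasB}, decomposition according to the connected components of the $0$-spins, an isoperimetric lower bound yielding an energy gain of at least $2$ per component against an entropy factor $|\Lambda_L|$ per component, and hypothesis \eqref{8-3} to sum the resulting series. The only (cosmetic) difference is that you apply the sharp perimeter bound $2\lceil 2\sqrt{k}\rceil$ to each component separately, whereas the paper glues the components (gaining $2$ per gluing, equation \eqref{8-1}), applies the cruder bound $I\ge 4\sqrt{k}$ once to the glued configuration, and treats $k=n_0(n_0+1)+1$ as a separate case.
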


\begin{proof}
Fix $1\le k\le N=n_0(n_0+1)+1$, and denote by $\ms A_k$ the
configurations in $\ms V_\moins$ with $k$ spins different from
$-1$. The ratio appearing in the statement of the assertion is equal
to
\begin{equation}
\label{8-2}
\sum_{k=1}^N \sum_{\sigma\in \ms A_k} \frac{\mu_\beta(\sigma)}
{\mu_\beta (\moins)} \;\le\; 2^N\, \sum_{k=1}^N \sum_{\sigma\in \ms A^0_k} 
\frac{\mu_\beta(\sigma)} {\mu_\beta (\moins)}\;\cdot
\end{equation}
In this equation, $\ms A^0_k$ represents the configurations in
$\ms V_\moins$ with $k$ spins equal to $0$, and we applied Assertion
\ref{fasB}.

Fix $k<N$ and consider the set $\ms A^0_{k,j}$ of configurations in
$\ms A^0_{k}$ for which the $0$-spins have $j$ connected
components. There are at most $C_0 |\Lambda_L|^j$ of such
configurations, and the energy of one of them, denoted by $\sigma$, is
equal to $\bb H(\moins) - kh + I_{-1,0}(\sigma)$, where
$I_{-1,0}(\sigma)$ represents the size of the interface. By
\eqref{8-1}, $I_{-1,0}(\sigma) \ge I_{-1,0}(\sigma^*) + 2(j-1)$, where
$\sigma^*$ is configuration obtained from $\sigma$ by gluing the
connected components. By \cite[Assertion 4.A]{bl2},
$I_{-1,0}(\sigma^*) \ge 4\sqrt{k}$. Therefore, the previous sum for
$k<N$ is bounded above by
\begin{equation*}
C_0\, \sum_{k=1}^{N-1} e^{-[4\sqrt{k} -kh]\beta } \sum_{j=1}^k
|\Lambda_L|^j \, e^{-2(j-1)\beta}\; .
\end{equation*}
Since $|\Lambda_L| \, e^{-2\beta} \le 1/2$ for $\beta$ sufficiently
large and since
$4\sqrt{k} -kh \ge \min\{4-h , 4\sqrt{N-1} - (N-1)h\}$, the previous
sum is less than or equal to
\begin{equation*}
C_0\, |\Lambda_L|\, \Big( e^{-[4-h]\beta } + e^{-2(n_0-1)\beta } \Big)
\;\le\; C_0\, |\Lambda_L|\, \Big( e^{-[4-h]\beta } + e^{-2\beta} \Big)
\;\le\; C_0\, |\Lambda_L|\, e^{-2\beta} 
\end{equation*}
because $4\sqrt{N-1} - (N-1)h = 4\sqrt{n_0(n_0+1)} - n_0(n_0+1) h \ge
4n_0 - 2(n_0+1) = 2(n_0-1) \ge 2$.

It remains to consider the contribution of the set $\ms A_N$. There
are at most $C_0 \, |\Lambda_L|$ configurations in this set, and each
configuration has the same energy. The contribution of these terms to
the sum \eqref{8-2} is bounded by
$C_0 \, |\Lambda_L| e^{-(2n_0+1) \beta} \le C_0 \, |\Lambda_L| e^{-2
  \beta}$. This completes the proof of the lemma.
\end{proof}

\begin{asser}
\label{p72}
We have that
\begin{equation*}
\lim_{\beta\to\infty} \frac 1{\theta_\beta}\, 
\bb E_\moins \Big[ \int_0^{H_{\mf B^+}} \mb
1\{\sigma(s) \in \Delta\}\, ds \Big] \;=\; 0\;. 
\end{equation*}
\end{asser}

\begin{proof}
As the process $\sigma_t$ is stopped at time $H_{\mf B^+}$, we may
replace $\Delta$ by $\Delta\cap\ms V_\moins$.  By \cite[Proposition
6.10]{bl2}, and by definition of $\theta_\beta$, the expression
appearing in the statement of the lemma is bounded above by
\begin{equation*}
\frac 1{\theta_\beta}\,  
\frac{\mu_\beta (\Delta\cap \ms V_\moins)}{\capa (\moins, \mf  B^+)} \;=\; 
\frac{\capa (\moins, \ms M \setminus \{\moins\})}{\capa (\moins, \mf  B^+)} 
\,\frac{\mu_\beta(\Delta\cap \ms V_\moins)}{\mu_\beta (\moins)}\;\cdot
\end{equation*}
By Proposition \ref{mt3} and \eqref{410}, the first ratio converges to
$1/3$, while by Lemma \ref{p74} the second one converges to $0$.
\end{proof}

A similar result holds for the pairs $(\zero, \ms V_\zero)$,
$(\plus, \ms V_\plus)$, where the valleys are defined analogously as
$\ms V_{\moins}$.

Denote by $\mf R^l_2$, resp, $\mf R^s_2$, the set of configurations
with $n_0(n_0+1)+2$ $0$-spins in a background of $-1$-spins. The
$0$-spins form a $[n_0\times (n_0+1)]$-rectangle with two extra
contiguous $0$-spins attached to one of the longest, resp. shortest,
sides of the rectangle. The next result is needed in the proof of
Lemma \ref{p73}.

\begin{lemma}
\label{p75}
For every $t>0$, 
\begin{equation*}
\lim_{\beta\to\infty}
\frac 1{\theta_\beta}\, \max_{\xi \in \mf R^s_2 \cup \mf R} 
\, \bb E_\xi \big[ H_\moins \wedge t \theta_\beta \,\big] \;=\; 0\;. 
\end{equation*}
\end{lemma}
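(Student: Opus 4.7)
The strategy is to prove the stronger statement $\max_{\xi\in\mf R^s_2\cup\mf R}\bb E_\xi[H_\moins]=o(\theta_\beta)$; the claim then follows at once from $H_\moins\wedge t\theta_\beta\le H_\moins$.

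\textbf{Step 1 (case $\xi\in\mf R$).} I would follow the iterative valley decomposition of Lemma \ref{ml8}. Inside each valley $\ms V_\sigma$ attached to a rectangle $\sigma$ with shorter side $m\le n_0$, the potential-theoretic inequality
\begin{equation*}
\bb E_\sigma[H_{\ms B}] \;\le\; \frac{\mu_\beta(\ms V_\sigma)}{\capa(\sigma,\ms B)}
\end{equation*}
applies (the reflected and original chains agree before $H_{\ms B}$). The Thomson flow built in the proof of Lemma \ref{ml8} gives, after renormalization, $\capa(\sigma,\ms S(\sigma))\ge C^{-1}\mu_\beta(\sigma)\,e^{-(m-1)h\beta}$. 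An energy count in the spirit of Lemma \ref{p74} yields $\mu_\beta(\ms V_\sigma)\le C|\Lambda_L|\,\mu_\beta(\sigma)$, since $\sigma$ is the unique energy minimum of $\ms V_\sigma$ and every perturbation in $\ms V_\sigma$ costs at least $h$. Substituting gives $\bb E_\sigma[H_{\ms B}]\le C|\Lambda_L|\,e^{(n_0-1)h\beta}$. Iterating over the at most $n_0(n_0+1)$ shrinkings between $\xi$ and $\moins$, and absorbing the per-stage failure probability $C_0\delta_2(\beta)=o(1)$ by a trivial $O(\theta_\beta)$ bound on the rare failure event, yields
\begin{equation*}
\bb E_\xi[H_\moins] \;\le\; C|\Lambda_L|\,e^{(n_0-1)h\beta} \;+\; o(1)\cdot\theta_\beta\;.
\end{equation*}
Comparison with $\theta_\beta\asymp e^{a\beta}/|\Lambda_L|$ from \eqref{2-1} uses the identity $a-(n_0-1)h = 4(n_0+1)-hn_0(n_0+2)>0$, which holds because $hn_0<2$. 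The first summand divided by $\theta_\beta$ equals $C|\Lambda_L|^2 e^{-[a-(n_0-1)h]\beta}$ and vanishes thanks to the hypothesis $|\Lambda_L|^2 e^{-(2-h)\beta}\to 0$ of \eqref{8-4}.

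\textbf{Step 2 (case $\xi\in\mf R^s_2$).} The two extra contiguous $0$-spins lie on a side of length $n_0$, so completing the would-be row produces $n_0\times(n_0+2)$, still subcritical. A valley analysis parallel to Lemma \ref{ml8}, with exit set taken to be the unique configuration of $\mf R$ obtained by flipping both tab spins back to $-1$, shows that this detachment occurs with probability $1-o(1)$ within expected time of order $e^{h\beta}$. The strong Markov property at $H_{\mf R}$ then allows one to invoke Step 1.

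\textbf{Main obstacle.} The delicate point is extracting a sharp mean-time estimate from Lemma \ref{ml8}, which only controls exit probabilities. The naive bound $\bb E_\xi[H_A]\le 1/\capa(\xi,A)$ is catastrophically weak in the large-volume regime because the partition function $Z_\beta$ enters uncontrolled. The remedy sketched above restricts attention to one valley at a time via the reflected process: within $\ms V_\sigma$ the ratio $\mu_\beta(\ms V_\sigma)/\capa(\sigma,\ms B)$ is of Arrhenius form $e^{(n_0-1)h\beta}$ modulo a polynomial in $|\Lambda_L|$, and that polynomial is precisely what \eqref{8-4} was designed to absorb.
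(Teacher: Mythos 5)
There is a genuine gap in Step 2. Starting from $\xi \in \mf R^s_2$, the two contiguous attached $0$-spins do \emph{not} detach with probability $1-o(1)$: flipping to $0$ a $-1$-spin at either end of the protrusion (it has two $0$-neighbours, one in the protrusion and one in the rectangle) is a downhill move of energy $-h$, performed at rate $1$, whereas flipping an end $0$-spin of the protrusion back to $-1$ costs $+h$ and occurs at rate $e^{-h\beta}$. Hence with probability $1-O(e^{-h\beta})$ the protrusion grows and, by the mechanism of Lemma \ref{mas3}, fills the short side, producing the $n_0\times(n_0+2)$ rectangle; the detachment you invoke is an event of probability $O(e^{-h\beta})$. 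A valley analysis ``parallel to Lemma \ref{ml8}'' centred at $\xi$ with exit set equal to the single detached configuration is therefore ill-posed ($\xi$ is not a local minimum, and the descent direction points toward completing the row), and the asserted exit probability $1-o(1)$ and time scale $e^{h\beta}$ are both wrong. The correct reduction --- the one the paper uses, and the one compatible with your own Step 1 --- builds the valley around the completed $n_0\times(n_0+2)$ rectangle $\xi'$, notes that $\xi$ lies inside it, shows that the exit occurs through one of the \emph{two} configurations of $\mf R$ obtained by removing an entire row of length $n_0$ from $\xi'$, and then invokes the case $\xi\in\mf R$ by the strong Markov property.

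Step 1 is a legitimate alternative to the paper's argument: the paper does not iterate over the local valleys of Lemma \ref{ml8}, but instead uses Lemma \ref{ml8} only for the probability estimate $\bb P_\sigma[H_{\mf B^+}<H_\moins]\to 0$, passes to the process reflected in the global valley $\ms V_\moins$, and applies the mean-hitting-time bound once, gaining the factor $\capa_{\ms V}(\sigma^*,\moins)/\capa_{\ms V}(\sigma,\moins)\le C_0\,\mu_{\ms V}(\sigma^*)/\mu_{\ms V}(\sigma)\to 0$ from the energy gap between $\mf R^a$ and $\mf R$. Your stage-by-stage route can be made to work, but the energy bookkeeping as written is flawed: $\sigma$ is \emph{not} the energy minimum of $\ms V_\sigma=\ms G\cup\ms B$, since the exit configurations $\ms S(\sigma)\subset\ms B$ satisfy $\bb H(\ms S(\sigma))=\bb H(\sigma)-(2-mh)<\bb H(\sigma)$ --- this is precisely the subcriticality that drives the shrinking --- so $\mu_\beta(\ms V_\sigma)\le C|\Lambda_L|\,\mu_\beta(\sigma)$ is unjustified and the per-stage bound $C|\Lambda_L|e^{(n_0-1)h\beta}$ does not follow as derived. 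The repair is easy: either use the identity $\capa(\sigma,\ms B)\,\bb E_\sigma[H_{\ms B}]=\sum_{y\notin\ms B}\mu(y)\,\bb P_y[H_\sigma<H_{\ms B}]\le\mu(\ms G)$, so that only $\ms G$ (where $\sigma$ is indeed the minimum) enters, or accept the weaker per-stage bound $Ce^{(2-h)\beta}$, which is still $o(\theta_\beta)$ under \eqref{8-4}. Finally, your opening claim to prove the untruncated bound $\bb E_\xi[H_\moins]=o(\theta_\beta)$ is inconsistent with absorbing the failure events of Lemma \ref{ml8} by ``a trivial $O(\theta_\beta)$ bound'': that bound is available only for $H_\moins\wedge t\theta_\beta$, so the whole argument must be run for the truncated expectation, which is all the lemma requires.
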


\begin{proof}
Consider a configuration $\sigma\in\mf R$. Applying Lemma \ref{ml8}
repeatedly yields that $\bb P_\sigma [ H_{\mf B^+} < H_\moins ]\to 0$. We may
therefore include the indicator of the set $\{H_\moins < H_{\mf
  B^+}\}$ inside the expectation appearing in the statement of the
lemma. After this inclusion, we may replace $H_\moins$ by $H_{\mf B^+
  \cup \{\moins\}}$.  At this point, it remains to estimate
$(1/\theta_\beta)\, \bb E_\sigma [ H_{\mf B^+ \cup \{\moins\}}]$.
Since the process is stopped as it reaches $\mf B^+$, we may replace
$\sigma_t$ by the reflected process at $\ms V_\moins$. After this
replacement, bound $H_{\mf B^+ \cup \{\moins\}}$ by $H_\moins$.

We need therefore to estimate
$(1/\theta_\beta)\, \bb E^{\ms V}_\sigma [ H_\moins]$.
By \cite[Proposition 6.10]{bl2} and by definition of $\theta_\beta$,
this expression is bounded by
\begin{equation*}
\frac{\capa(\sigma^*, \moins)}{\mu_\beta(\moins)}
\, \frac{1}{\capa_{\ms V}(\sigma, \moins)} \;=\;
\frac{\capa_{\ms V}(\sigma^*, \moins)}{\mu_{\ms V} (\moins)}
\, \frac{1}{\capa_{\ms V}(\sigma, \moins)} \;,
\end{equation*}
where $\sigma^*$ is a configuration in $\mf R^a$. By Lemma \ref{p74},
$\mu_{\ms V} (\moins)\to 1$, while by the proofs of Lemma \ref{ml4}
and \ref{fl1}, 
\begin{equation*}
\lim_{\beta\to\infty} \frac{\capa_{\ms V}(\sigma^*,
  \moins)}{\capa_{\ms V}(\sigma, \moins)}
\;\le \; C_0\, \lim_{\beta\to\infty} \frac{\mu_{\ms V}(\sigma^*)}
{\mu_{\ms V}(\sigma)} \;=\; 0\;.
\end{equation*}

Consider now a configuration $\sigma \in \mf R^s_2$. Denote by $A$ the
$[n_0\times (n_0+2)]$-rectangle obtained from the set of $0$-spins of
$\sigma$ by completing the line or the row where the two extra spins
sit. Denote by $\xi$ the configuration where each site in $A$ has a
$0$-spin, all the other ones being $-1$.

As in the proof of Lemma \ref{ml8}, we define the valley $\ms V_\xi$
of $\xi$ in two stages.  Fix $0\le k\le n_0$. We first flip
sequentially $k$ spins of $A$ from $0$ to $-1$. At each step we only
flip a $0$-spin if it is surrounded by two $-1$-spins. The set of all
configurations obtained by such a sequence of $k$ flips is represented
by $\ms G_k$. In particular, since at the beginning we may only flip
the corners of $A$, $\ms G_1$ is composed of the four configurations
obtained by flipping to $-1$ one corner of $A$. Let
$\ms G = \cup_{0\le k<n_0} \ms G_k$, and note that $\ms G_{n_0}$ has
not been included in the union.

The second stage in the construction of the valley $\ms V_\xi$
consists in flipping a spin of a configuration in $\ms G$. More
precisely, denote by $\ms B$ all configurations which are not in
$\ms G$, but which can be obtained from a configuration in $\ms G$ by
flipping one spin. The set $\ms B$ is interpreted as the boundary of
the valley $\ms V_\xi := \ms G \cup \ms B$ and it contains
$\ms G_{n_0}$.

Note that $\sigma$ belongs to $\ms G$ and that starting from $\sigma$
the process hits $\ms B$ before $\moins$, so that
$H_\moins = H_{\ms B} + H_\moins \circ H_{\ms B}$. Since $(a+b) \wedge
t \le a + (b\wedge t)$, $a$, $b>0$, 
\begin{equation*}
\bb E_\sigma \big[ H_\moins \wedge t \theta_\beta \,\big] \;\le\;
\bb E_\sigma \big[ H_{\ms B} \,\big]  \;+\;
\bb E_\sigma \big[ \,(H_\moins \circ H_{\ms B}) \wedge t \theta_\beta
\,\big]\;. 
\end{equation*}
Replacing $\sigma_t$ by the process reflected at $\ms V_\xi$, applying
\cite[Proposition 6.10]{bl2}, and estimating the capacities yield
that the first term divided by $\theta_\beta$ converges to $0$ as
$\beta\to\infty$. 

We turn to the second term. We may insert the indicator function of
the set $\{H_{\ms B} = H_{\{\eta^{(1)}, \eta^{(2)}\}}\}$, where
$\eta^{(1)}$, $\eta^{(2)}$ are the configurations obtained from $\xi$
by flipping to $-1$ a line or a row of length $n_0$ of the rectangle
$A$. After this insertion, the strong Markov property yields that the
second term of the previous displayed equation is bounded by
\begin{equation*}
\bb E_\sigma \Big[  \, \mb 1\{H_{\ms B} = H_{\{\eta^{(1)},
  \eta^{(2)}\}}\}\, \bb E_{\sigma(H_{\ms B})} 
\big[ H_\moins \wedge t \theta_\beta \,\big]\, \Big]\;.
\end{equation*}
Since the configurations $\eta^{(1)}$, $\eta^{(2)}$ belong to $\mf R$,
the result follows from the first part of the proof.
\end{proof}

\begin{lemma}
\label{p73}
For every $t>0$, 
\begin{equation*}
\limsup_{\beta\to\infty}
\frac 1{\theta_\beta}\, \bb E_\moins \Big[ \int_0^{t\theta_\beta} \mb
1\{\sigma(s) \in \Delta\}\, ds \Big] \;\le\; 3\, \limsup_{\beta\to\infty}
\max_{\xi \in \mf R^l_2}\, \frac 1{\theta_\beta}\, 
\bb E_\xi \Big[ \int_0^{t\theta_\beta} \mb 1\{\sigma(s) \in \Delta\}\, ds \,\Big]
\;. 
\end{equation*}
\end{lemma}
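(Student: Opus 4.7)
The plan is to decompose the integral $\int_0^{t\theta_\beta} \mb 1_\Delta(\sigma_s)\,ds$ using the strong Markov property at the hitting times $H_\zero$ and $H_\plus$, which produces the factor of $3$ in the statement. Writing $T:=t\theta_\beta$ and using that $0\le H_\zero\wedge T\le H_\plus\wedge T\le T$, we split
\begin{equation*}
\int_0^T \mb 1_\Delta(\sigma_s)\,ds \;=\; \int_0^{H_\zero\wedge T}\,\mb 1_\Delta\,ds \;+\; \int_{H_\zero\wedge T}^{H_\plus\wedge T}\,\mb 1_\Delta\,ds \;+\; \int_{H_\plus\wedge T}^{T}\,\mb 1_\Delta\,ds\;,
\end{equation*}
then take expectation under $\bb P_\moins$ and apply strong Markov at the two middle stopping times to obtain
\begin{equation*}
\bb E_\moins\Big[\int_0^T \mb 1_\Delta\,ds\Big] \;\le\; \bb E_\moins\Big[\int_0^{H_\zero\wedge T}\,\mb 1_\Delta\,ds\Big] \,+\, \bb E_\zero\Big[\int_0^{H_\plus\wedge T}\,\mb 1_\Delta\,ds\Big] \,+\, \bb E_\plus\Big[\int_0^T\,\mb 1_\Delta\,ds\Big].
\end{equation*}
I will show that each of these three summands, divided by $\theta_\beta$, is bounded in $\limsup$ by $\limsup_{\beta\to\infty}\max_{\xi\in\mf R^l_2}\theta_\beta^{-1}\,\bb E_\xi[\int_0^T \mb 1_\Delta(\sigma_s)\,ds]$.

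The third summand is in fact $o(\theta_\beta)$: by \eqref{8-6}, with probability tending to $1$ the process stays inside $\ms V_\plus$ throughout $[0,T]$, on which event the integral equals the occupation time of $\Delta\cap\ms V_\plus$. An analog of Lemma \ref{p74} gives $\mu_\beta(\Delta\cap\ms V_\plus)/\mu_\beta(\plus)\to 0$, and an analog of Assertion \ref{p72} (applied to the dynamics reflected inside $\ms V_\plus$ and using \cite[Proposition 6.10]{bl2}) then shows this occupation time is $o(\theta_\beta)$. The complementary event $\{H_{\mf B^+_\plus}\le T\}$ contributes at most $T\cdot\bb P_\plus[H_{\mf B^+_\plus}\le T]=o(\theta_\beta)$, again by \eqref{8-6}.

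For the first summand (the second is handled symmetrically via the natural $\zero\to\plus$ counterpart of $\mf R^l_2$, which by the $-1\leftrightarrow 0$ symmetry of the relevant jump rates has the same order of magnitude), I would decompose $[0,H_\zero\wedge T]$ using the successive excursions from $\moins$ to the boundary $\mf B^+$. By Theorem \ref{mt22} and Proposition \ref{Lemma1}, each excursion reaches $\mf B^+$ at a configuration in $\mf R^a$ up to an error absorbed into the $o(\theta_\beta)$ terms. From $\mf R^a$ either the process returns to $\moins$ (via $\mf R^s$, or after a failed attempt from $\mf R^l$ that falls back to $\mf R$) or it enters $\mf R^l_2$ and then commits to reaching $\zero$ by Corollary \ref{ml10}. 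Each failed excursion contributes $o(\theta_\beta)$ to the integral, by Assertion \ref{p72} (for the climb from $\moins$ to $\mf B^+$) together with Lemma \ref{p75} (for the quick return to $\moins$ from $\mf R^s_2\cup\mf R$). At the stopping time $\tau:=H_{\mf R^l_2}\wedge H_\zero\wedge T$, on $\{\tau=H_{\mf R^l_2}<T\}$ the strong Markov property at $\tau$ bounds the remaining integral from $\tau$ to $H_\zero\wedge T$ by $\max_{\xi\in\mf R^l_2}\bb E_\xi[\int_0^T\mb 1_\Delta(\sigma_s)\,ds]$.

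The main obstacle is the accounting of the failed excursions: although each contributes only $o(\theta_\beta)$ individually, there may be many, and the accumulated error must remain $o(\theta_\beta)$. This is ensured by Proposition \ref{ml7}, which implies that from any $\sigma\in\mf R^l$ the process reaches $\{\zero,\plus\}$ before $\moins$ with probability bounded below by $1/3-C_0\,\delta(\beta)$; hence the number of failed excursions is stochastically dominated by a geometric random variable whose parameter is bounded away from $0$ uniformly in $\beta$, and the cumulative error is indeed $o(\theta_\beta)$.
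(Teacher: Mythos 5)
Your decomposition at $H_\zero$ and $H_\plus$ is a genuinely different route from the paper's, and it misidentifies where the factor $3$ comes from. In the paper the lemma is purely a reduction from the starting point $\moins$ to the starting points $\mf R^l_2$: one writes a renewal inequality $u_\beta \le v_\beta + \tfrac23\, u_\beta + o(1)$, where $u_\beta$ is the left-hand side, $v_\beta$ is the maximum over $\mf R^l_2$, and $\tfrac 23$ is the asymptotic probability that an excursion from $\moins$ to the boundary $\mf B^+$ falls back into $\mf R^s_2\cup\mf R$ and hence returns to $\moins$ (computed from the hitting distribution of Lemma \ref{as61} and the jump probabilities of Assertions \ref{mas1}--\ref{mas2}, the quick return being Lemma \ref{p75}); the constant $3=(1-\tfrac23)^{-1}$ is the sum of the geometric series. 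The stages $[H_\zero, H_\plus]$ and $[H_\plus,\infty)$ are deliberately \emph{not} treated in this lemma: they are deferred to the proof of Proposition \ref{ml6}, where the argument is repeated for $\zero$ and Lemma \ref{p77} is invoked for $\plus$. Your treatment of the first summand (one application of the strong Markov property at $H_{\mf R^l_2}\wedge H_\zero \wedge t\theta_\beta$, plus a Wald-type bound on the geometrically many failed excursions, each costing $o(\theta_\beta)$ by Assertion \ref{p72} and Lemma \ref{p75}) is sound and is essentially the paper's recursion in disguise; your third summand is Lemma \ref{p77}.

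The genuine gap is the second summand. To bound $\bb E_\zero\big[\int_0^{H_\plus\wedge t\theta_\beta}\mb 1_\Delta\,ds\big]$ your own excursion argument produces $\max_{\xi}\bb E_\xi\big[\int_0^{t\theta_\beta}\mb 1_\Delta\,ds\big]$ over the $\zero\to\plus$ counterpart of $\mf R^l_2$ (droplets of $+1$-spins in a sea of $0$-spins), not over $\mf R^l_2$ itself, and the statement you must prove compares everything to $\mf R^l_2$. Your appeal to a ``$-1\leftrightarrow 0$ symmetry'' is doubly insufficient: first, the dynamics is not invariant under shifting the spin values, because the third spin value opens different channels with different barriers (e.g.\ a $0$-spin in a sea of $0$'s can flip to $-1$ at cost $4+h$, whereas the corresponding flip $-1\to+1$ in the $\{-1,0\}$ sector costs $16-2h$), so any comparison must be re-derived, which amounts to redoing the estimates of Sections \ref{energy}--\ref{proofs1} in the $\{0,+1\}$ sector; second, and decisively, ``same order of magnitude'' gives at best a bound $C\cdot\max_{\xi\in\mf R^l_2}(\cdots)$ with an unspecified $C$, while your three-way split needs this summand to cost at most $2\cdot\max_{\xi\in\mf R^l_2}(\cdots)$ in order to land on the constant $3$. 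As written, your argument proves the inequality with $3$ replaced by an uncontrolled constant, which is not the statement of the lemma. The fix is either to adopt the paper's renewal argument (which never leaves the $\moins$-sector) or to first establish the analogues of Lemmata \ref{p76} and \ref{p77} for the $\zero\to\plus$ stage, which would show your second and third summands are in fact $o(\theta_\beta)$.
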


\begin{proof}
By Assertion \ref{p72},
\begin{equation*}
\lim_{\beta\to\infty} 
\frac 1{\theta_\beta}\, \bb E_\moins \Big[ \int_0^{t\theta_\beta} \mb
1\{\sigma(s) \in \Delta\}\, ds \, \mb 1 \big\{ H_{\mf B^+} \ge t\theta_\beta
\big\}\, \Big] \;=\; 0\;.
\end{equation*}
We may therefore insert the indicator of the set
$\{ H_{\mf B^+} < t\theta_\beta\}$ inside the expectation appearing in
the statement of the assertion at a negligible cost. By Proposition
\ref{Lemma1}, we may also insert the indicator of the set
$\{ H_{\mf B^+} = H_{\mf R^a} \}$. After inserting these indicator
functions, by the strong Markov property, we get that the expectation
appearing in the statement of the lemma is bounded by
\begin{equation*}
\frac 1{\theta_\beta}\, \bb E_\moins \Big[ 
\mb 1 \big\{H_{\mf R^a} = H_{\mf B^+} \le t\theta_\beta \big\}
\, \bb E_{\sigma(H_{\mf B^+})}  \Big[ \int_0^{t\theta_\beta} \mb
1\{\sigma(s) \in \Delta\}\, ds \, \Big]\, \Big] \;+\; R_\beta\;,
\end{equation*}
where $R_\beta\to 0$.

The previous expectation is bounded by
\begin{equation*}
\frac 1{\theta_\beta}\, \sum_{\sigma\in \mf R^a}
\bb P_\moins \big[ H_{\sigma} = H_{\mf B^+} \big]
\, \bb E_\sigma \Big[ \int_0^{t\theta_\beta} \mb
1\{\sigma(s) \in \Delta\}\, ds \,\Big] \;.
\end{equation*}
Since $\lambda_\beta(\sigma) \ge 1$ for $\sigma\in\mf R^a$ and since
$\mf R^a\subset \Delta$, by removing from the integral the interval
$[0,\tau_1]$, where $\tau_1$ represents the time of the first jump,
the previous expectation is less than or equal to
\begin{equation*}
\frac 1{\theta_\beta}\, \sum_{\sigma\in \mf R^a} \sum_\xi
\bb P_\moins \big[ H_{\sigma} = H_{\mf B^+} \big]\, p_\beta(\sigma, \xi)
\, \bb E_\xi \Big[ \int_0^{t\theta_\beta} \mb
1\{\sigma(s) \in \Delta\}\, ds \,\Big] \;+\; \frac 1{\theta_\beta}\;\cdot
\end{equation*}
By Assertions \ref{mas1} and \ref{mas2}, we may disregard all
configurations $\xi$ which do not have two contiguous attached
$0$-spins and which do not belong to $\mf R$. The previous expression
is thus bounded above by
\begin{equation*}
\max_{\xi \in \mf R^l_2}\, \frac 1{\theta_\beta}\, 
\bb E_\xi \Big[ \int_0^{t\theta_\beta} \mb 1\{\sigma(s) \in \Delta\}\, ds \,\Big]
\;+\; \frac 23\, \frac 1{\theta_\beta}\, \max_{\xi \in \mf R^s_2 \cup \mf R} 
\, \bb E_\xi \Big[ \int_0^{t\theta_\beta} \mb
1\{\sigma(s) \in \Delta\}\, ds \,\Big] \;+\; R_\beta\;,
\end{equation*}
where $\mf R^s_2$ represents the set of configurations with two
contiguous $0$-spins attached to the shortest side of the rectangle,
and $R_\beta \to 0$.

The second expectations is bounded by
\begin{equation*}
\frac 1{\theta_\beta}\, \max_{\xi \in \mf R^s_2 \cup \mf R} 
\, \bb E_\xi \big[ H_\moins \wedge t \theta_\beta \,\big] \;+\; 
\frac 23\, \frac 1{\theta_\beta}\, 
\, \bb E_\moins \Big[ \int_0^{t\theta_\beta} \mb
1\{\sigma(s) \in \Delta\}\, ds \,\Big]\;.
\end{equation*}
By Lemma \ref{p75}, the first term vanishes as $\beta\to\infty$.  The
second one can be absorbed in the left-hand side of the expression
appearing in the statement of the lemma, which completes the proof of
the lemma.
\end{proof}

\begin{lemma}
\label{p76}
For every $t>0$, 
\begin{equation*}
\limsup_{\beta\to\infty}
\max_{\xi \in \mf R^l_2}\, \frac 1{\theta_\beta}\, 
\bb E_\xi \big[ H_\zero \wedge t\theta_\beta \,\big]
\;=\; 0 \;. 
\end{equation*}
\end{lemma}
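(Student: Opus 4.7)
\noindent\textit{Proof plan.} Since $H_\zero \wedge t\theta_\beta \le H_\zero$ and on the event $A = \{H_\zero = H_{\ms M}\}$ we have $H_\zero = H_{\ms M}$, the plan is to write
\begin{equation*}
\bb E_\xi[H_\zero \wedge t\theta_\beta] \;\le\; \bb E_\xi[H_{\ms M}] \;+\; t\,\theta_\beta\, \bb P_\xi[A^c]\;.
\end{equation*}
Corollary \ref{ml10} combined with \eqref{8-4} yields $\bb P_\xi[A^c] \le C_0 |\Lambda_L|^{1/2}\delta_5(\beta) \to 0$, so the second term contributes $o(\theta_\beta)$ uniformly in $\xi$, and it remains to show $\max_{\xi\in\mf R^l_2} \bb E_\xi[H_{\ms M}] = o(\theta_\beta)$.

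To bound $\bb E_\xi[H_{\ms M}]$, I would mimic the droplet-growth decomposition used in the proof of Corollary \ref{ml10}. The first $n_0-1$ jumps, all at rates bounded below by $1$ by the energetic analysis underlying Lemma \ref{mas3}, reach in $O(1)$ expected time a configuration $\sigma_1$ which is a $[(n_0+1)\times(n_0+1)]$-square of $0$-spins in a sea of $-1$-spins. The subsequent stages $\sigma_1,\sigma_2,\dots$ are rectangles $m_k\times n_k$ with $n_0<m_k\le n_k$, and for each $k$ I would use the valley $\ms V_{\sigma_k}$ of Section \ref{proofs1}. The capacity estimate \eqref{m2} from the proof of Lemma \ref{ml1} gives
\begin{equation*}
\capa_{\ms V}(\sigma_k,\ms R_2) \;\ge\; \frac{m_k+n_k-2}{Z_{\ms V}\, e^{\beta \bb H_0}}\;,
\end{equation*}
which combined with the expected-hitting-time bound from \cite[Proposition 6.10]{bl2} (employed as in the proof of Lemma \ref{p75}, and using that $\mu_{\ms V}$ is a probability measure) yields an expected exit time from $\ms V_{\sigma_k}$ of order at most $e^{(2-h)\beta}$.

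Summing over the $O(L)$ stages (the finite number of end-phase cases of Proposition \ref{ml11} being absorbed into the constant) produces
\begin{equation*}
\bb E_\xi[H_{\ms M}] \;\le\; C\, |\Lambda_L|^{1/2}\, e^{(2-h)\beta}\;.
\end{equation*}
By \eqref{2-1}, $\theta_\beta \ge C\,|\Lambda_L|^{-1} e^{a\beta}$ with $a = 4(n_0+1) - h[n_0(n_0+1)+1]$; since $hn_0\le 2$ by definition of $n_0$, $a-(2-h) = 4n_0+2 - hn_0(n_0+1) \ge 2n_0 \ge 4$, while \eqref{8-4} yields $|\Lambda_L| \le C\, e^{(2-h)\beta/2}$. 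Combining,
\begin{equation*}
\bb E_\xi[H_{\ms M}]/\theta_\beta \;\le\; C\,|\Lambda_L|^{3/2}\, e^{-[a-(2-h)]\beta}
\;\le\; C\, e^{-[a- 7(2-h)/4]\beta}\;\to\;0\;,
\end{equation*}
which completes the plan.

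The main technical obstacle will be cleanly justifying the per-stage expected-hitting-time bound: the precise form of \cite[Proposition 6.10]{bl2} involves a ratio of capacities whose evaluation requires care with the normalization $Z_{\ms V}$, in particular because configurations in $\ms R_2$ can contribute $O(m_k+n_k)$ to $Z_{\ms V}/e^{-\beta\bb H(\sigma_k)}$ when $h$ is close to $1$. A secondary obstacle is tracking non-monotone excursions in which the droplet momentarily shrinks; these occur with probability $O(\delta_5(\beta))$ per stage by Proposition \ref{ml11}, and I would handle them by an induction over stages showing that their cumulative contribution multiplies the monotone bound by $1+o(1)$ under \eqref{8-4}.
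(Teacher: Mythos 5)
Your overall strategy---grow the droplet stage by stage, bound each stage's expected duration by $e^{(2-h)\beta}$ via the capacity estimate \eqref{m2} together with \cite[Proposition 6.10]{bl2}, and check that $O(|\Lambda_L|^{1/2})$ stages times $e^{(2-h)\beta}$ is $o(\theta_\beta)$ under \eqref{8-4}---is exactly the paper's, and your final numerology is correct. The technical point you flag about the normalization $Z_{\ms V}$ is real but benign: the extra contribution of $\ms R_2$ to $Z_{\ms V}$ only produces an additive term of order $e^{h\beta}\ll e^{(2-h)\beta}$ per stage.

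The genuine gap is in your very first reduction. By passing from $\bb E_\xi[H_\zero\wedge t\theta_\beta]$ to the untruncated expectation $\bb E_\xi[H_{\ms M}]$ you discard the one device that controls the non-monotone excursions. The paper carries the truncation through every stage: writing $H_\zero = E_{c,d} + H_\zero\circ E_{c,d}$ and using $(a+b)\wedge T\le a+(b\wedge T)$, the event that a stage fails contributes at most $t\theta_\beta\,\bb P[\text{failure}]\le C_0\,t\,\theta_\beta\,\delta_1(\beta)$ to the expectation---i.e.\ $C_0\,t\,\delta_1(\beta)$ after dividing by $\theta_\beta$---with \emph{no information needed} about where the process lands when a stage fails. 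In your version, a failure at stage $k$ leaves the process in a configuration $\zeta$ outside the controlled family (a retracted protuberance, a spurious nucleation elsewhere in the torus, a $+1$-spin inside the droplet, \dots), and the induction you sketch would require a bound on $\bb E_\zeta[H_{\ms M}]$ for every such $\zeta$. The claim that the failures ``multiply the monotone bound by $1+o(1)$'' is asserted, not proved: there is no a priori reason the conditional expectation of $H_{\ms M}$ after a failure is comparable to the monotone bound, and establishing $\max_\zeta \bb E_\zeta[H_{\ms M}]=o(\theta_\beta)$ over all reachable $\zeta$ would be a substantial separate argument. The fix is simply not to drop the truncation: keep $\wedge\, t\theta_\beta$ inside the expectation at each stage, so that each failure costs $t$ times its probability, and the sum of these costs over the $O(|\Lambda_L|^{1/2})$ stages vanishes under \eqref{8-4}.
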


\begin{proof}
The proof of this lemma follows the steps of Section \ref{proofs1}
where we described the growth of the supercritical droplet. Fix a
configuration $\xi$ in $\mf R^l_2$ and recall Lemma \ref{mas3}. 
By this result,
\begin{equation*}
\frac 1{\theta_\beta}\, \bb E_\xi \big[ \, H_\zero \wedge t\theta_\beta
\,\big] \;\le\; \frac 1{\theta_\beta}\, \bb E_\xi \big[ \,
\mb 1\{E_{c,d} = H_{c,d} \}
\, (H_\zero \wedge t\theta_\beta) \,\big] \;+\; t\, \delta_1 (\beta)\;. 
\end{equation*}
Since $E_{c,d} < H_\zero$, we may write $H_\zero$ as $E_{c,d} +
H_\zero \circ E_{c,d}$ and bound $H_\zero \wedge t\theta_\beta$ by
$E_{c,d} + [(H_\zero \circ E_{c,d}) \wedge t\theta_\beta]$. Hence, by
the strong Markov property, the previous expression is less than or
equal to
\begin{equation*}
t\, \delta_1 (\beta) \;+\; \frac 1{\theta_\beta}\, 
\bb E_\xi \big[  E_{c,d} \,\big] \;+\; 
\frac 1{\theta_\beta}\, \bb E_\xi \Big[ 
\mb 1\{E_{c,d} = H_{c,d} \}\, \bb E_{\sigma(E_{c,d})} \big[ 
\, H_\zero \wedge t\theta_\beta \big] \,\Big] \;.
\end{equation*}
On the set $\{E_{c,d} = H_{c,d} \}$, $\sigma(E_{c,d})$ is a
configuration with $(n_0+1)^2$ $0$-spins forming a square in a sea of
$-1$-spins. The previous expression is thus bounded by
\begin{equation*}
t\, \delta_1 (\beta) \;+\; \frac 1{\theta_\beta}\, 
\bb E_\xi \big[  E_{c,d} \,\big] \;+\; 
\frac 1{\theta_\beta}\, \max_{\eta} \bb E_\eta \big[ 
\, H_\zero \wedge t\theta_\beta \,\big] \;,
\end{equation*}
where the maximum is carried over the configurations just described.

Compare the previous expression with the one on the statement of the
lemma. We replaced the set $\mf R^l_2$ by the set of $0$-squares of
length $(n_0+1)$ at the cost of the error term $t\, \delta_1 (\beta)$,
and the expectation of the hitting time of the boundary of the
neighborhood of $\xi$. Proceeding in this way until hitting $\zero$
will bring the sum of all errors and the sum of the expectations of
all hitting times. The assumptions of Theorem \ref{mainprop} were
inserted to guarantee that the sum of the error terms converge to $0$
as $\beta\to\infty$. The hitting times of the boundary of the
neighborhoods involve either the creation of two contiguous $0$-spin
at the boundary of a rectangle, whose time length is of order
$e^{(2-h)\beta}$, or the filling of a side of a rectangle, which
corresponds to the hitting time of an asymmetric one-dimensional
random walk, whose time-length order is proportional to the length of
the rectangle. Both orders are much smaller than $\theta_\beta$, which
completes the proof of the lemma.
\end{proof}

\begin{lemma}
\label{p77}
For every $t>0$, 
\begin{equation*}
\limsup_{\beta\to\infty}
\frac 1{\theta_\beta}\, \bb E_\plus \Big[ \int_0^{t\theta_\beta} \mb
1\{\sigma(s) \in \Delta\}\, ds \Big] \;=\; 0 \;. 
\end{equation*}
\end{lemma}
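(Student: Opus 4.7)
The plan is to exploit two facts: the valley $\ms V_\plus$ is, by \eqref{8-5}, much deeper than $\ms V_\moins$ and $\ms V_\zero$, so that with high probability the process does not escape $\ms V_\plus$ before time $t\theta_\beta$; and, by the analog of Lemma \ref{p74} for the pair $(\plus,\ms V_\plus)$ (recorded by the authors immediately after Lemma \ref{p74}), the Gibbs measure restricted to $\ms V_\plus$ concentrates on $\{\plus\}$. Together these reduce the problem to a simple stationarity identity for the chain reflected on $\ms V_\plus$.

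First I would split according to whether the process has exited the valley:
\begin{equation*}
\bb E_\plus\Big[\int_0^{t\theta_\beta}\!\mb 1\{\sigma(s)\in\Delta\}\,ds\Big] \;\le\; t\theta_\beta\, \bb P_\plus\big[H_{\mf B^+_\plus} < t\theta_\beta\big] \;+\; \bb E_\plus\Big[\int_0^{t\theta_\beta}\!\mb 1\{\sigma(s)\in\Delta\}\,ds\,;\,H_{\mf B^+_\plus}\ge t\theta_\beta\Big].
\end{equation*}
Divided by $\theta_\beta$, the first summand equals $t\, \bb P_\plus[H_{\mf B^+_\plus}<t\theta_\beta]$, which vanishes by \eqref{8-6}. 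On the event $\{H_{\mf B^+_\plus}\ge t\theta_\beta\}$ the path $(\sigma(s))_{s\le t\theta_\beta}$ lies in $\ms V_\plus$ and coincides with the trajectory of the chain $\sigma^{\ms V_\plus}_\cdot$ obtained by forbidding jumps out of $\ms V_\plus$. Writing $\bb E^{\ms V_\plus}_\xi$ for the expectation under this reflected dynamics started at $\xi$, the second summand is therefore bounded by $\bb E^{\ms V_\plus}_\plus[\int_0^{t\theta_\beta}\mb 1\{\sigma(s)\in\Delta\cap\ms V_\plus\}\,ds]$.

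Second, I would use stationarity of the reflected chain, whose (reversible) invariant measure is $\mu_{\ms V_\plus}(\cdot):=\mu_\beta(\cdot)/\mu_\beta(\ms V_\plus)$. The standard identity
\begin{equation*}
\sum_{\xi\in\ms V_\plus}\mu_{\ms V_\plus}(\xi)\,\bb E^{\ms V_\plus}_\xi\Big[\int_0^{t\theta_\beta}\mb 1\{\sigma(s)\in\Delta\cap\ms V_\plus\}\,ds\Big] \;=\; t\theta_\beta\,\mu_{\ms V_\plus}(\Delta\cap\ms V_\plus)
\end{equation*}
has nonnegative summands; keeping only the $\xi=\plus$ term and noting that $\Delta\cap\ms V_\plus=\ms V_\plus\setminus\{\plus\}$ (because $\moins$ and $\zero$ differ from $\plus$ at $|\Lambda_L|$ sites, hence lie outside $\ms V_\plus$ for $\beta$ large) yields
\begin{equation*}
\bb E^{\ms V_\plus}_\plus\Big[\int_0^{t\theta_\beta}\mb 1\{\sigma(s)\in\Delta\cap\ms V_\plus\}\,ds\Big] \;\le\; t\theta_\beta\,\frac{\mu_\beta(\ms V_\plus\setminus\{\plus\})}{\mu_\beta(\plus)}.
\end{equation*}
Dividing by $\theta_\beta$ gives $t$ times a ratio that tends to $0$ by the $(\plus,\ms V_\plus)$-analog of Lemma \ref{p74} (valid under \eqref{8-3}, and hence under \eqref{8-4}).

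The main obstacle is the mismatch of scales between $\theta_\beta$ and the typical escape time from $\ms V_\plus$, which is of order $\kappa_\beta \gg \theta_\beta$ by \eqref{8-5} and \eqref{8-f1}. The approach of Assertion \ref{p72}, which relied on \cite[Proposition 6.10]{bl2} to bound the time spent in $\Delta$ before exit by $\mu_\beta(\Delta\cap\ms V_\moins)/\capa(\moins,\mf B^+)$, would transpose here to $\mu_\beta(\Delta\cap\ms V_\plus)/\capa(\plus,\mf B^+_\plus)$, an expression of order $\kappa_\beta\cdot\mu_\beta(\ms V_\plus\setminus\{\plus\})/\mu_\beta(\plus)$; after division by $\theta_\beta$ this produces a factor $\kappa_\beta/\theta_\beta$ that grows exponentially in $\beta$ and overwhelms the smallness provided by the analog of Lemma \ref{p74}. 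The stationarity argument above circumvents the difficulty because it estimates the equilibrium \emph{rate} at which time accumulates in $\Delta$ rather than the total time accumulated before escape; this rate is the small quantity $\mu_{\ms V_\plus}(\Delta)/\mu_{\ms V_\plus}(\plus)$, which is independent of the escape time.
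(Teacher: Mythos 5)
Your proof is correct and follows essentially the same route as the paper's: discard the event $\{H_{\mf B^+_\plus} < t\theta_\beta\}$ via \eqref{8-6}, pass to the process reflected on $\ms V_\plus$, use stationarity of $\mu_{\ms V}$ to bound the time spent in $\Delta$ by $t\theta_\beta\,\mu_\beta(\ms V_\plus\setminus\{\plus\})/\mu_\beta(\plus)$, and conclude with the $(\plus,\ms V_\plus)$-analog of Lemma \ref{p74}. Your closing remark correctly identifies why the capacity bound of Assertion \ref{p72} cannot be transposed here and why the equilibrium argument is the right one.
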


\begin{proof}
By \eqref{8-6}, we may insert the indicator of the set $\{H_{\mf
  B^+_\plus} \ge t \,\theta_\beta\}$ inside the expectation. After
this insertion, we may replace the process $\sigma_t$ by the reflected
process at $\ms V_\plus$, denoted by $\eta_t$, and then bounded the
expression by
\begin{equation*}
\frac 1{\theta_\beta}\, \bb E^{\ms V}_\plus \Big[ \int_0^{t\theta_\beta} \mb
1\{\eta(s) \in \Delta\}\, ds \Big] \;. 
\end{equation*}
This term is equal to
\begin{equation*}
\frac 1{\theta_\beta}\, \int_0^{t\theta_\beta} \bb P^{\ms V}_\plus \big[ \mb
\eta (s) \in \Delta \big] \, ds \;\le\;
\frac 1{\theta_\beta}\, \int_0^{t\theta_\beta} \frac 1 {\mu_{\ms
    V}(\plus)} \sum_{\xi\in \ms V_\plus} \mu_{\ms V} (\xi) \, \bb P^{\ms V}_\xi \big[ \mb
\eta (s) \in \Delta \big] \, ds \;. 
\end{equation*}
As $\mu_{\ms V}$ is the stationary state for the reflected process,
this expression is equal to
\begin{equation*}
t\, \frac {\mu_{\ms V} (\Delta)}  {\mu_{\ms V}(\plus)}  \;=\;
t\, \frac {\mu_{\beta} (\ms V_\plus \setminus \{\plus\})}
{\mu_{\beta}(\plus)} \;\cdot 
\end{equation*}
By Lemma \ref{p74}, for $\ms V_\plus$ instead of $\ms V_\moins$, this
expression vanishes as $\beta\to\infty$. 
\end{proof}

\begin{proof}[Proof of Proposition \ref{ml6}]
Lemmata \ref{p73} and \ref{p76} show that the time spent on $\Delta$
until the process reaches $\zero$ is negligible. We may repeat the
same argument to extend the result up to the time where the process
reaches $\plus$.  Once the process reaches $\plus$, we apply Lemma
\ref{p77}. 
\end{proof}

\section{Proof of Theorem \ref{mt1b}}
\label{sec6}

Unless otherwise stated, we assume throughout this section that the
hypotheses of Theorem \ref{mt1b} are in force.  According to
\cite[Proposition 2.1]{llm}, we have to show that for all $\eta\in\ms
M$,
\begin{equation}
\label{9-2}
\lim_{\delta\to 0} \limsup_{\beta\to\infty} 
\sup_{2\delta\le s\le 3\delta} 
\bb P_{\eta} [ \sigma(s\theta_\beta) \in \Delta ]\;=\;0\;. 
\end{equation}
We present the proof for $\eta = \moins$. The proof for $\eta=\zero$
is identical. The one for $\eta=\plus$ is even simpler because the
valley is deeper.

\begin{lemma}
\label{l9-1}
Under $\bb P_\moins$, the random variable $3\, H_{\mf
  B^+}/\theta_\beta$ converges to a mean one exponential random
variable.
\end{lemma}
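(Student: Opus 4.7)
The plan is to reduce the lemma to two asymptotic statements: (a) $\bb E_\moins[H_{\mf B^+}] = (1+o_\beta(1))\,\theta_\beta/3$ and (b) $H_{\mf B^+}/\bb E_\moins[H_{\mf B^+}]$ converges in law to a mean one exponential under $\bb P_\moins$. Combining (a) and (b) is immediate and gives the lemma.

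For (a), I would start from the potential-theoretic identity
\begin{equation*}
\bb E_\moins[H_{\mf B^+}] \;=\; \frac{1}{\capa(\moins,\mf B^+)}\sum_{\sigma\notin\mf B^+}\mu_\beta(\sigma)\,\bb P_\sigma[H_\moins<H_{\mf B^+}]\;,
\end{equation*}
valid for reversible chains. Since $\mf B^+$ is the external boundary of $\ms V_\moins$, any trajectory from $\sigma\notin\ms V_\moins$ to $\moins$ must enter $\ms V_\moins$ through $\mf B^+$, so $\bb P_\sigma[H_\moins<H_{\mf B^+}]=0$ for such $\sigma$. The remaining sum, restricted to $\sigma\in\ms V_\moins\setminus\mf B^+$, is sandwiched between $\mu_\beta(\moins)$ (from $\sigma=\moins$) and $\mu_\beta(\ms V_\moins\setminus\mf B^+)$, both of which equal $\mu_\beta(\moins)(1+o_\beta(1))$ by Lemma \ref{p74}. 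Combined with the capacity ratio $\capa(\moins,\mf B^+)/\capa(\moins,\{\zero,\plus\})\to 3$, obtained by setting $\sigma=\eta\in \mf R^l\subset\mf R^a$ and comparing \eqref{410} with Proposition \ref{mt3}, this yields (a).

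For (b), I would invoke the general single-well exponential-limit principle for reversible metastable chains developed in \cite{bl2} and cast in the present trace-process framework in \cite{llm}: provided the stationary weight of a well concentrates on a single bottom state and the chain mixes inside the well on a time scale negligible compared to the mean exit time, the exit time from the bottom, divided by its expectation, converges in law to $\mathrm{Exp}(1)$. The concentration hypothesis is exactly Lemma \ref{p74}. The fast-mixing hypothesis reduces to showing that, uniformly in $\sigma\in\ms V_\moins$, the expected return time to $\moins$ for the dynamics reflected at $\ms V_\moins$ is $o_\beta(\theta_\beta)$. This can be established by a case analysis of the configurations in $\ms V_\moins$ (subcritical clusters of $0$-spins, the rectangles in $\mf R$, and the extra configurations in $\mf R^a$), paralleling the arguments of Lemma \ref{p75}, together with the observation that all internal energy barriers within $\ms V_\moins$ are strictly smaller than $\Gamma_c$ by the very definition of the well.

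The principal obstacle is this uniform return-time estimate, since it must cover every starting configuration in $\ms V_\moins$ and not merely those already treated in Lemma \ref{p75}. However, the case-by-case reasoning is routine given the limited variety of configurations in $\ms V_\moins$ and the quantitative capacity estimates developed in Sections \ref{energy} and \ref{sec4}; once it is in place, steps (a) and (b) combine to give $3\,H_{\mf B^+}/\theta_\beta\to\mathrm{Exp}(1)$ in distribution, as stated.
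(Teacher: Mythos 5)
Your overall strategy coincides with the paper's: both arguments pass to the dynamics reflected at $\ms V_\moins$ and reduce the lemma to the general single-well theorem of \cite{bl2}, with the depth identified through the capacity estimates \eqref{410} and Proposition \ref{mt3} and the concentration of $\mu_\beta$ on $\moins$ supplied by Lemma \ref{p74}. Your step (a) is a legitimate alternative computation of the depth via the mean hitting time formula; the paper instead reads the depth off directly as
$\mu_\beta(\moins)/\capa(\moins,\mf B^+)
= \theta_\beta\,\capa(\moins,\{\zero,\plus\})/\capa(\moins,\mf B^+)$,
which converges to $\theta_\beta/3$.

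The substantive issue is in your step (b). You invoke an exponential-limit principle under two hypotheses --- concentration of the measure plus a uniform fast-mixing (return-time) estimate over \emph{all} of $\ms V_\moins$ --- and you leave the latter unproven, calling it routine. As written this is a gap: the configurations handled in Lemma \ref{p75} do not exhaust $\ms V_\moins$, and a genuinely uniform $o_\beta(\theta_\beta)$ bound on return times to $\moins$ would require a further case analysis that you have not carried out. The paper sidesteps this entirely: \cite[Theorem 2.6]{bl2} requires only its condition (2.14), which is trivially satisfied because the bottom $\{\moins\}$ is a singleton, and its condition (2.15), which is exactly the measure concentration of Lemma \ref{p74}; no separate mixing hypothesis enters. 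Property (V2) of the resulting valley then yields the exponential law for $3\,H_{\mf B^+}/\theta_\beta$ directly. So either cite the precise statement of \cite[Theorem 2.6]{bl2}, in which case the ``principal obstacle'' you identify disappears, or, if you keep the mixing formulation, you must actually prove the uniform return-time estimate rather than assert it.
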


\begin{proof}
As we are interested in $H_{\mf B^+}$, we may replace the process
$\sigma_t$ by the reflected process at $\ms V_\moins$, denoted by
$\eta_t$. 

The proof is based on \cite[Theorem 2.6]{bl2} applied to the triple
$(\{\moins\},\ms V_\moins \setminus \mf B^+, \moins)$. We claim that
the process $\eta_t$ fulfills all the hypotheses of this
theorem. Condition (2.14) is satisfied because the set $\{\moins\}$ is
a singleton, and condition (2.15) is in force in view of Lemma
\ref{p74}. Therefore, by this theorem, the triple $(\{\moins\},\ms
V_\moins \setminus \mf B^+, \moins)$ is a valley of depth
\begin{equation*}
\frac{\mu_{\ms V} (\moins)}{\capa_{\ms V} (\moins, \mf B^+)} \;=\; 
\frac{\mu_{\beta} (\moins)}{\capa (\moins, \mf B^+)} \;=\; 
\frac{\capa (\moins, \{\zero, \plus\})}{\capa (\moins, \mf B^+)}\, \theta_\beta \;,
\end{equation*}
where the last identity follows from the definition of $\theta_\beta$
given in \eqref{71}. By \eqref{410} and Proposition \ref{mt3}, the
last ratio converges to $1/3$. Hence, by property (V2) of
\cite[Definition 2.1]{bl2}, $3\, H_{\mf B^+}/ \theta_\beta$ converges
to a mean-one exponential random variable, as claimed.
\end{proof}

By the previous lemma, for $2\delta \le s\le 3\delta$,
\begin{equation*}
\bb P_{\moins} \big[\, \sigma(s\theta_\beta) \in \Delta \big] \;=\;
\bb P_{\moins} \big[\, \sigma(s\theta_\beta) \in \Delta \,,\, H_{\mf
  B^+} \ge 4\delta \theta_\beta \big] \;+\; R_{\beta, \delta}\;,
\end{equation*}
for some remainder $R_{\beta, \delta}$ which vanishes as
$\beta\to\infty$ and then $\delta\to 0$. On the set $\{H_{\mf B^+} \ge
4\delta\}$, we may replace the process $\sigma_t$ by the reflected
process $\eta_t$ and bound the first term by
\begin{align*}
\bb P^{\ms V}_{\moins} \big[\, \eta (s\theta_\beta) \in \Delta \big] 
\; \le \; \frac 1{\mu_{\ms V}(\moins)}\, \sum_{\sigma\in \ms V_\moins}
\mu_{\ms V}(\sigma)  \, \bb P^{\ms V}_{\sigma} \big[\,
\eta (s\theta_\beta) \in \Delta \big] \;
= \; \frac {\mu_{\ms V}(\Delta)}{\mu_{\ms V}(\moins)}
\end{align*}
because $\mu_{\ms V}$ is the stationary state. By Lemma \ref{p74} this
expression vanishes as $\beta\to\infty$, which proves \eqref{9-2} for
$\eta=\moins$. 

\smallskip\noindent{\bf Acknowledgements.} 
C. Landim has been partially supported by FAPERJ CNE E-26/201.207/2014
and by CNPq Bolsa de Produtividade em Pesquisa PQ
303538/2014-. C. Landim and M. Mourragui have been partially supported
by ANR-15-CE40-0020-01 LSD of the French National Research Agency.

\end{document}